\def\R{\mathbb{R}}
\def\N{\mathbb{N}}
\def\U{\mathcal{U}}
\def\V{\mathcal{V}}
\def\T{\mathbb{T}}
\def\supp{\operatorname{supp}}
\newcommand{\norm}[1]{\left\lVert#1\right\rVert}
\newcommand{\abs}[1]{\left\lvert#1\right\rvert}
\def\spann{\operatorname{span}}
\newcommand{\defeq}{\mathrel{:\mkern-0.25mu=}}
\newcommand{\eqdef}{\mathrel{=\mkern-0.25mu:}}
\newtheorem{thm}{Theorem}[section]
\newtheorem{cor}[thm]{Corollary}
\newtheorem{prop}[thm]{Proposition}
\newtheorem{lem}[thm]{Lemma}
\theoremstyle{definition}
\newtheorem{defin}[thm]{Definition}
\newtheorem{rem}[thm]{Remark}
\numberwithin{equation}{section}
\begin{document}

\title{Bounded solutions of ideal MHD with compact support in space-time}

\subjclass[2010]{35Q35, 76W05, 76B03}

\keywords{Magnetohydrodynamics, convex integration, conservation laws, compensated compactness}
\date{}

\author{Daniel Faraco}
\address{Departamento de Matem\'{a}ticas \\ Universidad Aut\'{o}noma de Madrid, E-28049 Madrid, Spain; ICMAT CSIC-UAM-UC3M-UCM, E-28049 Madrid, Spain}
\email{daniel.faraco@uam.es}
\thanks{D.F. was partially supported by ICMAT Severo Ochoa projects SEV-2011-0087 and SEV-2015-556, the grants MTM2014-57769-P-1 and MTM2017-85934-C3-2-P (Spain) and the ERC grant 307179-GFTIPFD, ERC grant 834728-QUAMAP.  
S.L. was supported by the ERC grant 307179-GFTIPFD and by the AtMath Collaboration at the University of Helsinki. L. Sz. was supported by ERC Grant 724298-DIFFINCL. Part of this work was completed in the Hausdorff Research Institute (HIM) in Bonn during the Trimester Programme Evolution of Interfaces. The authors gratefully acknowledge the warm hospitality of HIM during this time. D.F. also thanks the hospitality of the University of Aalto where part of his research took place.}

\author{Sauli Lindberg}
\address{Department of Mathematics and Statistics \\ University of Helsinki, P.O. Box 68, 00014 Helsingin yliopisto, Finland}
\email{sauli.lindberg@helsinki.fi}

\author{L\'{a}szl\'{o} Sz\'{e}kelyhidi, Jr.}
\address{Institut f\"{u}r mathematik \\ Universit\"{a}t Leipzig, Augustusplatz 10, D-04109, Leipzig,
Germany}
\email{laszlo.szekelyhidi@math.uni-leipzig.de}

\date{}

\begin{abstract}
We show that in 3-dimensional ideal magnetohydrodynamics there exist infinitely many bounded solutions that are compactly supported in space-time and have non-trivial velocity and magnetic fields. The solutions violate conservation of total energy and cross helicity, but preserve magnetic helicity. For the 2-dimensional case we show that, in contrast, no nontrivial compactly supported solutions exist in the energy space.
\end{abstract}

\maketitle

\section{Introduction}
Ideal magnetohydrodynamics (MHD in short) couples Maxwell equations with Euler equations to study the macroscopic behaviour of electrically conducting fluids such as plasmas and liquid metals (see~\cite{GLBL} and~\cite{ST}). The corresponding system of partial differential equations governs the simultaneous evolution of a velocity field $u$ and a magnetic field $B$ which are divergence free. The evolution of $u$ is described by the Cauchy momentum equation with an external force given by the Lorentz force induced by $B$. The evolution of $B$, in turn, is described by the induction equation which couples Maxwell-Faraday law with Ohm's law. 



The ideal MHD equations give a wealth of structure to smooth solutions and several integral quantities are preserved.
In 3D, smooth solutions conserve the \emph{total energy}, but also two other quantities related to the topological invariants of the system are constant functions of time: The \emph{cross helicity}  measures the entanglement of vorticity  and magnetic field  and the \emph{magnetic helicity} measures the linkage and twist  of magnetic field lines. 
Magnetic helicity was first studied by Woltjer in~\cite{Woltjer} and interpreted topologically in the highly influential work of Moffatt~\cite{Mof}, see also~\cite{ArnoldKhesinbook}. In fact, it was recently been proved in \cite{KPY} that cross helicity and magnetic helicity characterise all regular integral invariants of ideal MHD.

In this paper we are interested in weak solutions 
of the ideal MHD system, which in some sense describe the infinite Reynolds number limit. As pointed out in \cite{CKS} such weak solutions should reflect two properties:
\begin{enumerate}
\item[(i)] anomalous dissipation of energy;
\item[(ii)] conservation of magnetic helicity.
\end{enumerate}
Indeed, just as in the hydrodynamic situation, in MHD turbulence the rate of total energy dissipation in viscous, resistive MHD seems not to tend to zero when the Reynolds number and magnetic Reynolds number tend to infinity. This has been recently verified numerically in 3D, see~\cite{DA},~\cite{LBMM} and~\cite{MP}. On the other hand simulations and theoretical results have shown that magnetic helicity is a rather robust conserved quantity even in turbulent regimes, and J.B.Taylor conjectured that  magnetic helicity is approximately conserved for small resistivities~\cite{Taylor} (unlike subhelicities along Lagrangian subdomains that are magnetically closed at the initial time). Taylor's conjecture is at the core of Woltjer-Taylor relaxation theory which predicts that after an initial turbulent state, various laboratory plasmas relax towards a quiescent state which minimises magnetic energy subject to the constraint of magnetic helicity conservation (see~\cite{OS,ArnoldKhesinbook}).

The conservation of magnetic helicity for weak solutions of ideal MHD was first addressed in \cite{CKS}, and subsequently it was shown in~\cite{Aluie} and~\cite{KL} that it is conserved if $u,B \in L^3_{x,t}$, i.e., in contrast with energy conservation, no smoothess is required. Moreover, the first and second author recently proved that if a solution in the energy space $L^\infty_t L^2_x$ arises as an inviscid limit, then it conserves magnetic helicity (see~\cite{FL}). In this context, Theorem \ref{Magnetic helicity preservation theorem}  below extends~\cite[Corollary 1.3]{FL}, from ideal (i.e. inviscid, non-resistive) limits of Leray-Hopf solutions  to a larger class of possible approximation schemes. 

Our main purpose in this paper is to show the existence of nontrivial weak solutions to ideal 3D MHD compatible with both requirements (i)-(ii) above:


\begin{thm} \label{MHD 3D theorem}
There exist bounded, compactly supported weak solutions of ideal MHD in $\R^3$, with both $u, B$ nontrivial, such that neither total energy nor cross helicity is conserved in time.
\end{thm}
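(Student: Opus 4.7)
The plan is to prove Theorem 1.1 by convex integration in the Tartar / De Lellis--Sz\'ekelyhidi framework. The first step is to rewrite ideal MHD as a linear differential system plus a nonlinear pointwise constraint. Introducing new variables $M$ ranging over symmetric trace-free $3\times 3$ matrices, $N$ ranging over antisymmetric $3\times 3$ matrices, and a modified pressure $q$, the equations are equivalent to the linear system
\[
\partial_t u + \operatorname{div} M + \nabla q = 0, \quad \partial_t B + \operatorname{div} N = 0, \quad \operatorname{div} u = 0, \quad \operatorname{div} B = 0,
\]
together with the pointwise algebraic constraint
\[
M = u \otimes u - B \otimes B - \tfrac{1}{3}(|u|^2 - |B|^2) I, \qquad N = u \otimes B - B \otimes u.
\]

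The second step is to identify the wave cone $\Lambda$ of this linear system and to compute enough of the $\Lambda$-convex hull of the constraint set to exhibit a sizeable open set $\U$ of \emph{strict subsolutions}, meaning fields $(u,B,M,N,q)$ satisfying the linear system with energy density $\tfrac{1}{2}(|u|^2+|B|^2)$ strictly below a prescribed profile $e(x,t)$ compactly supported in a chosen cylinder $\Omega \Subset \R \times \R^3$. The heart of the construction is then a plane-wave lemma: at every strict subsolution one produces nontrivial oscillatory perturbations $(\delta u,\delta B,\delta M,\delta N,\delta q)$ with frequencies along $\Lambda$ and amplitudes controlled from below by the subsolution gap $e - \tfrac{1}{2}(|u|^2+|B|^2)$. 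Such perturbations are constructed from MHD-adapted building blocks---Beltrami-type or Mikado-type flows coupling $u$ and $B$---which must simultaneously satisfy the two divergence constraints and reproduce the correct symmetric and antisymmetric parts of the coupled quadratic expressions.

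The third step is the standard Baire-category argument on the weak-$*$ closure $X_0$ of $\U$ in $L^\infty(\Omega)$. Starting from a smooth seed with $u,B \not\equiv 0$ and strictly positive subsolution gap throughout $\Omega$, the plane-wave lemma forces the functional measuring the total subsolution gap to have no points of continuity on $X_0$; consequently a residual set of $X_0$ consists of fields that saturate the constraint pointwise almost everywhere, i.e.\ of genuine bounded weak solutions of ideal MHD. These solutions are automatically supported in $\Omega$, agree with the zero solution outside, and have nontrivial $u,B$. Non-conservation of total energy is immediate, since $\int(|u|^2+|B|^2)\,dx$ vanishes outside the time-support of $\Omega$ but is strictly positive at intermediate times, while non-conservation of cross helicity $\int u \cdot B\,dx$ can be enforced either by prescribing the seed appropriately or by a further residuality argument on the continuous cross-helicity functional.

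The main obstacle is the plane-wave lemma of the second step. Unlike in the Euler case, the MHD constraint couples $u$ and $B$ through both a symmetric tensor $u\otimes u - B\otimes B$ and an antisymmetric tensor $u\otimes B - B\otimes u$, so the wave cone $\Lambda$ and the relevant part of the $\Lambda$-convex hull are substantially more intricate; one must check that the two vector fields can be perturbed simultaneously with amplitudes linear in the subsolution gap, without exhausting the freedom available in $(M,N)$. A secondary but delicate point, relevant to the additional claim of magnetic-helicity conservation, is to carry out the iteration with sufficient frequency localization that the limit solution lies in $L^3_{x,t}$ (or another class to which the criteria of \cite{Aluie}, \cite{KL} or \cite{FL} apply), so that helicity conservation follows automatically from the regularity of the produced solution.
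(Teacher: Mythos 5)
Your plan follows the general Tartar/De Lellis--Sz\'ekelyhidi template, but it contains a fundamental gap that the paper spends most of its effort circumventing: the $\Lambda$-convex hull of the MHD constitutive set has \emph{empty interior}, so the standard notion of ``strict subsolutions filling an open set'' that you rely on simply does not exist. The Maxwell part of the linear system, $\partial_t B + \nabla\times E = 0$, $\nabla\cdot B = 0$, is weakly closed: the quadratic quantity $B\cdot E$ is $\Lambda$-affine (Tartar's div-curl type example), so any limit of oscillations along the wave cone preserves the constraint $B\cdot E = 0$. Consequently $K^{lc,\Lambda}_{r,s}\subset\mathscr{M}=\{B\cdot E=0\}$, a codimension-one variety, and there is no open set $\U$ of strict subsolutions in which to run your Baire-category scheme. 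Your proposal identifies the plane-wave lemma as ``the main obstacle'' and worries about ``exhausting the freedom available in $(M,N)$,'' but the obstruction is sharper than a bookkeeping issue: no amount of cleverness with Beltrami or Mikado building blocks produces an open hull, because the constraint $B\cdot E=0$ propagates automatically to all iterates.

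The paper's way around this is the centerpiece of the argument, and it is missing from your proposal entirely: one must carry out convex integration \emph{on the nonlinear manifold} $\mathscr{M}$, using the relative interior $\operatorname{int}_{\mathscr{M}}(K^{lc,\Lambda}_{r,s})$. To add localized oscillations that stay on $\mathscr{M}$, the $(B,E)$-component is encoded as a Maxwell two-form $\omega$ on $\R^4$ and written in Clebsch form $\omega = d\varphi\wedge d\psi$, which automatically satisfies $\omega\wedge\omega=0$, i.e.\ $B\cdot E=0$. The perturbations are then inserted at the level of $\varphi,\psi$ rather than $(B,E)$ directly. This is a genuinely nonlinear potential, it does not work for every $\Lambda$-segment (``good'' vs.\ ``bad'' segments), and a separate argument (Propositions on relatively open sets and on the characterization of the hull) is needed to show that the restriction to good segments loses nothing. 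None of this machinery appears in your proposal, and without it the construction stalls at the very first iteration.

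Your final remark about enforcing $L^3$ regularity to invoke \cite{Aluie,KL,FL} is also off target. Bounded solutions are trivially in $L^3$, so no extra frequency localization is needed; the real content is that because the construction lives on $\mathscr{M}$, the pointwise relation $B\cdot E=0$ (hence magnetic-helicity conservation) holds by design, which is precisely the same structural fact that makes the hull degenerate in the first place. The two phenomena are two sides of the same coin, and recognizing this is what motivates the Clebsch/two-form strategy.
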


We note that bounded solutions in particular fall into the subcritical regime of \cite{KL} for magnetic helicity, so that for the solutions above magnetic helicity must vanish at all times even though the magnetic field $B$ is not identically zero. Moreover, 
as a corollary of Theorem \ref{Magnetic helicity preservation theorem} below, it also holds that for bounded solutions on $\T^3$, either the initial data has vanishing magnetic helicity or $B$ cannot have compact support in time. Indeed, as noted by Arnold~\cite{Arn}, $\int_{\T^3} \abs{B}^2 dx \ge C \abs{\int_{\T^3} A \cdot B dx}$ at every $t \in [0,T[$, where $A$ is the magnetic potential. 
It is also worth pointing out that the solutions in Theorem \ref{MHD 3D theorem} have nontrivial cross-helicity. 

MHD turbulence in 2D seems to have many similarities with the 3D case (in stark contrast with hydrodynamic turbulence), in particular there is plenty of numerical evidence for anomalous dissipation of energy \cite{BiskampWelter,BiskampBook}. Nevertheless, we will show in Section 2.2 that in 2D, under very mild conditions, weak solutions with nontrivial magnetic field cannot decay to zero in finite time, in particular solutions as in Theorem 1.1 do not exist in 2D.

Our construction is based on the framework developed in \cite{DLS09} by C.~De Lellis and the third author for the construction of weak solutions to the Euler equations. This framework is based on convex integration, which was developed by Gromov \cite{Gromov} following the work of Nash \cite{Nash}, and -- in a nutshell -- amounts to an iteration procedure whereby one approximates weak solutions via a sequence of subsolutions, in each iteration adding highly oscillatory perturbations designed to cancel the low wavenumber part of the error. In \cite{DLS09} convex integration was used in connection with Tartar's framework to obtain bounded nontrivial weak solutions of the Euler equations which have compact support and violate energy conservation. Such pathological weak solutions were known to exist \cite{Scheffer,Shnirelman} but the method of~\cite{DLS09} turned out to be very robust and many equations in hydrodynamics are amenable to it and its ramifications.

Roughly speaking the development of the theory followed two strands: concerning the Euler equations and in connection with Onsager's conjecture \cite{Onsager,Eyi}, an important problem was to push the regularity of such weak solutions beyond mere boundedness to the Onsager-critical regime. This programme, started in \cite{DLS13} finally culminated in Isett's work \cite{Ise16}, see also \cite{BDLSV}. For a thorough report of these developments and connections to Nash's work on isometric embeddings, we refer to \cite{DLS16}. Another, somewhat independent strand, was to adapt the techniques to other systems of equations, such as compressible Euler system \cite{CDLK}, active scalar equations \cite{CFG,Shvydkoy,Szekelyhidi,BSV} and others \cite{BLFNL, CM, CS, KY, Nov}. A key point in the technique is a study of the phase-space geometry of the underlying system, to understand the interaction of high-frequency perturbations with the nonlinearity in the equations in the spirit of L.~Tartar's compensated compactness. A particularly relevant example to this discussion is the case of 2D active scalar equations, where there seem to be a dichotomy between systems closed under weak convergence such as 2D Euler in vorticity form or SQG, and those with a large weak closure such as IPM \cite{CFG, Shvydkoy} - see the discussion in Section 8 of \cite{DLS12} and \cite{IV} in this regard. 

Concerning the ideal MHD system, setting the magnetic field $b \equiv 0$ obviously reduces to the incompressible Euler equations, and thus \cite{DLS09} applies. More generally, in~\cite{BLFNL} Bronzi, Lopes Filho and Nussenzveig Lopes constructed bounded weak solutions of the symmetry reduced form $u(x_1,x_2,x_3,t) = (u_1(x_1,x_2,t),u_2(x_1,x_2,t),0)$ and $B(x,t) = (0,0,b(x_1,x_2,t))$, compactly supported in time and not identically zero. These ``$2\frac{1}{2}$-dimensional'' solutions were obtained by reducing the symmetry reduced 3D MHD to 2D Euler with a passive tracer, where a modification of the strategy of \cite{DLS09} applied. Nevertheless, such reductions to the Euler system do not seem to be able to capture generic, truly 3-dimensional weak solutions, which -- with the simultaneous requirement of properties (i) and (ii) above -- seem to lie on the borderline between weakly closed (e.g. SQG) and non-closed (e.g. IPM) systems.

This remark will be explained in more detail in Section \ref{s:system} below - for the introduction let us merely point out that whilst the Cauchy momentum equation for the evolution of the velocity $u$ has a large relaxation (the main observation behind all results involving convex integration for the Euler equations), the Maxwell system for the evolution of the magnetic field $B$ is weakly closed (an observation going back to the pioneering work of Tartar \cite{Tartar}).Indeed, our whole philosophy in this paper is to emphasise the role of compensated compactness in connection with conserved quantities - in Section 2 we revisit Taylor's conjecture and conservation of mean-square magnetic potential conservation in this light. In turn, to deal with the additional rigidity due to conservation of magnetic helicity, inspired by~\cite{MS99} we develop a version of convex integration directly on differential two-forms (the Maxwell 2-form), consistent with the geometry of full 3D MHD.

\section{The ideal MHD system}\label{s:system}

We recall that the ideal MHD equations in three space dimensions are written as 
\begin{align}
& \partial_t u + u \cdot \nabla u- B \cdot \nabla B + \nabla \Pi = 0, \label{MHD} \\
& \partial_t B + \nabla \times (B \times u) = 0,\label{MHD2} \\
& \nabla \cdot u = \nabla \cdot B = 0, \label{MHD3}
  \end{align}
for a velocity field $u$, magnetic field $B$ and total pressure $\Pi$. In this paper we consider both the full space case $\R^3$ and the periodic setting $\T^3$.
In the latter case the zero-mean condition
\begin{equation} \label{MHD5}
\langle u\rangle = 0,\quad  \langle B\rangle = 0 \qquad \text{for a.e } t 
\end{equation}
is added to \eqref{MHD}--\eqref{MHD3}, where for notational convenience we write $\langle u\rangle$ for the spatial average on $\T^3$.

 As usual, weak solutions of  \eqref{MHD}--\eqref{MHD3} can be defined in the sense of distributions for $u,B\in L^2_{loc}$, using the identities $u \cdot \nabla u - B \cdot \nabla B = \nabla \cdot (u \otimes u - B \otimes B)$ and $\nabla\times(B\times u)=\nabla\cdot(B\otimes u-u\otimes B)$ for divergence-free fields.
That is,
\[\int_0^T \int_{\R^3} \left[ u \cdot \partial_t \varphi + (u \otimes u - B \otimes B) : D\varphi \right] + \int_{\R^3} u_0 \cdot \varphi(\cdot,0) = 0,\]
\[\int_0^T \int_{\R^3} [B \cdot \partial_t \varphi + (B \otimes u - u \otimes B) : D \varphi] + \int_{\R^3} B_0 \cdot \varphi(\cdot,0) = 0,\]
\[\int_0^T \int_{\R^3} u \cdot \nabla \varphi = \int_0^T \int_{\R^3} B \cdot \nabla \varphi = 0\]
for appropriate Cauchy data $u_0,B_0$ for all $\varphi \in C_c^\infty(\R^3 \times [0,T[)$ with $\nabla\cdot\varphi = 0$. An analogous definition is given in the periodic setting on the torus $\T^3$.

\subsection{Conserved quantities}

It is well known that there are three classically conserved quantities of ideal 3D MHD on the torus $\T^3$. For the first two, analogous definitions are available in $\R^3$.
\begin{defin} \label{Definition of integral quantities}
Let $(u,B)$ be a smooth solution of \eqref{MHD}-\eqref{MHD3} and let $A$ be a vector potential for $B$, i.e.~$\nabla \times A = B$. The \emph{total energy}, \emph{cross helicity} and \emph{magnetic helicity}  of $(u,B)$ are defined as
\begin{align*}
&\frac{1}{2} \int_{\T^3} (\abs{u(x,t)}^2+\abs{B(x,t)}^2) \, dx, \\
&\int_{\T^3} u(x,t) \cdot B(x,t) \, dx, \\
&\int_{\T^3} A(x,t) \cdot B(x,t) \, dx.
\end{align*}
\end{defin}
All three quantities defined above are conserved in time by smooth solutions. The conservation of total energy and cross helicity conservation was studied in~\cite{CKS,KL,WZ,Yu}. Conservation of the magnetic helicity was shown in~\cite{CKS} for solutions $u \in C([0,T]; B_{3,\infty}^{\alpha_1})$ and $B \in C([0,T]; B_{3,\infty}^{\alpha_2})$ with $\alpha_1 + 2 \alpha_2 > 0$. In~\cite{KL,Aluie}, magnetic helicity conservation is shown under the assumption that $u,B \in L^3(\T^3 \times ]0,T[)$. 

We note in passing that on the whole space $\R^3$ the analogous definitions of total energy and cross helicity lead to conserved quantities for square integrable solutions, but magnetic helicity is not well-defined. This boils down to the scaling properties of the function spaces in question; see Appendix A. However, for square integrable magnetic fields that are compactly supported in space, magnetic helicity is well-defined. Indeed, every $B \in L^1(\R^3)$ with $\nabla \cdot B = 0$ satisfies $\int_{\R^3} B(x) \, dx = \hat{B}(0) = 0$ since $\hat{B}(\xi) \cdot \xi/\abs{\xi} = 0$ for all $\xi \neq 0$ and $\hat{B}$ is continuous. 

Following L.~Tartar's pioneering work \cite{Tartar} one can understand the system \eqref{MHD}--\eqref{MHD3} as a coupling between linear conservation laws and a set of constitutive laws in form of pointwise constraints.  The conservation laws are
\begin{align}
& \partial_t u + \nabla \cdot S = 0, \label{Linearised MHD1} \\
& \partial_t B + \nabla \times E = 0, \label{Linearised MHD2} \\
& \nabla \cdot u = \nabla \cdot B = 0, \label{Linearised MHD3}
  \end{align}
where in 3D, $S$ is a symmetric $3\times 3$ tensor (the Cauchy stress tensor) and $E$ is a vectorfield (the electric field). Indeed, \eqref{Linearised MHD2} is simply the Maxwell-Faraday law for the electric field. In the periodic case \eqref{MHD5} is added to \eqref{Linearised MHD1}--\eqref{Linearised MHD3}. The \emph{constitutive set} is then obtained by relating the stress tensor $S$ and the electric field $E$ to velocity, magnetic field and pressure, e.g.~via the ideal Ohm's law:
\begin{equation}\label{e:K}
K \defeq \{(u,S,B,E) \colon S = u \otimes u - B \otimes B + \Pi I, \; \Pi \in \R, \; E = B \times u\}.
\end{equation}
It is easy to verify that the system \eqref{MHD}--\eqref{MHD3} can be equivalently formulated for the state variables $(u,S,B,E)$ as \eqref{Linearised MHD1}--\eqref{Linearised MHD3} together with $(u,S,B,E)(x,t)\in K$ a.e. $(x,t)$. 

Using this formulation one can easily identify the conservation of magnetic helicity as an instance of compensated compactness following the work of L.~Tartar \cite{Tartar} when applied to the Maxwell system 
\begin{equation}\label{e:maxwell}
\begin{split} 
\partial_tB+\nabla\times E&=0,\\
\nabla\cdot E&=0.
\end{split}
\end{equation}

To explain this, we recall the following generalisation of the div-curl lemma from Example 4 in \cite{Tartar}: suppose we have a sequence of magnetic and electric fields $(B_j,E_j)\rightharpoonup (B,E)$ converging weakly in $L^2_{x,t}$ and such that $\{\partial_tB_j+\nabla\times E_j\}$ and $\{\nabla\cdot E_j\}$ are in a compact subset of $H^{-1}$. Then $B_j\cdot E_j\overset{*}{\rightharpoonup} B\cdot E$ in the space of measures. In view of the constitutive law $E=B\times u$ we deduce that any reasonable approximation of \emph{bounded} weak solutions of ideal MHD leads in the limit to a solution $(u,S,B,E)$ of \eqref{Linearised MHD1}--\eqref{Linearised MHD3} with $B\cdot E=0$. That is, the state variables are constrained to the relaxed constitutive set
\begin{equation}\label{e:M}
\mathscr{M}=\{(u,S,B,E):\, B \cdot E = 0\}.
\end{equation}
In turn, perpendicularity of the electric $E$ and magnetic $B$ fields is closely related to conservation of magnetic helicity. Indeed, if $A$ is a magnetic potential (so that $\nabla\times A=B$), adapting the classical computation (e.g. \cite{BiskampBook}) shows that
\begin{equation}\label{e:maghel}
\frac{d}{dt} \int_{\T^3} A \cdot B\, dx = - 2 \int_{\T^3} B \cdot E \, dx.
\end{equation}
More generally, we have the following theorem, establishing the connection between compensated compactness and conservation of magnetic helicity, an issue that has been emphasised by L.Tartar \cite{Tartar,Tartar2}:

\begin{thm} \label{Magnetic helicity preservation theorem}\hfill
\begin{enumerate}
\item[(a)] Suppose that $(B,E)\in L^p\times L^{p'}(\T^3 \times ]0,T[)$, $\frac{1}{p}+\frac{1}{p'}=1$, with $\langle B\rangle=0$, is a solution of \eqref{e:maxwell}
and assume $B\cdot E=0$ a.e.. Then magnetic helicity is conserved.
\item[(b)] Suppose that $(B_j,E_j)$ is a sequence of solutions of \eqref{e:maxwell} as in (a), and in addition
$$
B_j \rightharpoonup B\textrm{ in }L^2(\T^3 \times ]0,T[)\textrm{ and }\sup_{j \in \N} \norm{E_j}_{L^1(\T^3 \times ]0,T[)} < \infty.
$$ 
Then magnetic helicity is conserved.
\end{enumerate}
\end{thm}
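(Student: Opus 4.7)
\emph{Part (a).} The goal is to establish the distributional identity
\begin{equation*}
\frac{d}{dt}\int_{\T^3} A\cdot B\,dx = -2\int_{\T^3} B\cdot E\,dx \quad \text{in } \mathcal{D}'(]0,T[),
\end{equation*}
so that the pointwise constraint $B\cdot E = 0$ forces $\int A\cdot B\,dx$ to be a.e.\ constant. Let $A := (-\Delta)^{-1}(\nabla\times B)$ be the Coulomb-gauge vector potential, well defined precisely because $\langle B\rangle = 0$; by Calderón-Zygmund, $A\in L^p(0,T;W^{1,p}(\T^3))$ and satisfies $\nabla\times A = B$, $\nabla\cdot A = 0$. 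Applying $(-\Delta)^{-1}(\nabla\times\cdot)$ to the Maxwell-Faraday equation and using $\nabla\cdot E = 0$ yields the distributional gauge identity $\partial_tA = -(E-\langle E\rangle)$.

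To make the computation rigorous I would mollify in space, $f^\epsilon := \rho_\epsilon\ast f$. Every operation above is a Fourier multiplier and commutes with convolution, so $B^\epsilon, E^\epsilon$ solve \eqref{e:maxwell}, $A^\epsilon$ is the Coulomb potential of $B^\epsilon$, and $\partial_tA^\epsilon = -(E^\epsilon-\langle E\rangle)$. The fields are smooth in space, so combining these identities with $\nabla\times A^\epsilon = B^\epsilon$ and $\langle B^\epsilon\rangle = 0$, and integrating by parts, gives
\begin{equation*}
\frac{d}{dt}\int_{\T^3} A^\epsilon\cdot B^\epsilon\,dx = -2\int_{\T^3} B^\epsilon\cdot E^\epsilon\,dx
\end{equation*}
in $\mathcal{D}'(]0,T[)$. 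Letting $\epsilon\to 0$, Hölder and standard mollifier estimates yield $B^\epsilon\cdot E^\epsilon\to B\cdot E = 0$ in $L^1(\T^3\times]0,T[)$, while the Sobolev embedding $W^{1,p}\hookrightarrow L^{p'}$ (valid for $p\geq 3/2$ in three dimensions; otherwise one argues on the Fourier side) gives $A^\epsilon\cdot B^\epsilon\to A\cdot B$ in $L^1$. The identity passes to the limit and the claim follows.

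\emph{Part (b).} By (a), each $h_j(t) := \int_{\T^3} A_j(t)\cdot B_j(t)\,dx$ is a.e.\ equal to a constant $c_j$. The plan is to show $h_j\to h := \int_{\T^3}A\cdot B\,dx$ in $\mathcal{D}'(]0,T[)$, which forces $h$ to be constant too. The key input is strong compactness of $\{A_j\}$ in $L^2(\T^3\times]0,T[)$: the weak convergence $B_j\rightharpoonup B$ in $L^2$ ensures, via Biot-Savart, that $\{A_j\}$ is bounded in $L^2(0,T;H^1(\T^3))$, while $\partial_tA_j = -(E_j-\langle E_j\rangle)$ is bounded in $L^1(0,T;L^1(\T^3))\subset L^1(0,T;H^{-s}(\T^3))$ for any $s>3/2$. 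The Aubin-Lions-Simon lemma with the compact chain $H^1\Subset L^2\hookrightarrow H^{-s}$ then furnishes, up to a subsequence, $A_j\to A$ strongly in $L^2(\T^3\times]0,T[)$. Combined with $B_j\rightharpoonup B$ weakly in $L^2$, testing against $\phi\in C_c^\infty(]0,T[)$ gives
\begin{equation*}
c_j\int_0^T\phi\,dt = \int_0^T\!\!\int_{\T^3}\phi\,A_j\cdot B_j\,dx\,dt \;\longrightarrow\; \int_0^T\!\!\int_{\T^3}\phi\,A\cdot B\,dx\,dt,
\end{equation*}
so $\{c_j\}$ converges and $h\equiv\lim c_j$ a.e.

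The hardest step is the compactness of $\{A_j\}$ in (b) under only the $L^1$ control on $E_j$, which could concentrate into a singular measure in the limit. Shifting attention from $B_j$ to the potential $A_j$ trades one spatial derivative for one, which is exactly enough to compensate for the weakness of the embedding $L^1\hookrightarrow H^{-s}$ inside Aubin-Lions-Simon. It is also important that part (a) never requires any pointwise meaning for $E$ in the limit.
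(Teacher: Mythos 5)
Your proof takes essentially the same route as the paper. For part (a) the paper also sets $A=-\Delta^{-1}(\nabla\times B)$, mollifies (in space-time rather than space only — an inessential difference), integrates by parts to obtain $\frac{d}{dt}\int A\cdot B = -2\int B\cdot E$ in $\mathcal D'(]0,T[)$, and concludes from $B\cdot E=0$. For part (b) the paper likewise runs Aubin-Lions on $\{A_j\}$ using the $L^2_tW^{1,2}_x$ bound and $\sup_j\|E_j\|_{L^1}<\infty$, and combines strong $L^2_{x,t}$ convergence of $A_j$ with weak convergence of $B_j$.

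The only notable deviation is the choice of the third space in the Aubin-Lions triple. You use $H^{-s}(\T^3)$ with $s>3/2$ (so that $L^1\hookrightarrow H^{-s}$), whereas the paper uses $(W^{1,4}_\sigma(\T^3))^*$. Both rest on the same embedding $W^{1,q}\hookrightarrow L^\infty$ for $q>3$. The paper's use of the \emph{solenoidal} dual space $(W^{1,4}_\sigma)^*$ is there to kill the gradient term $\nabla\varphi_j$ that appears in its version of the gauge identity $\partial_t A_j=-E_j+\langle E_j\rangle+\nabla\varphi_j$ (Lemma \ref{Time evolution of vector potential}), which it is not easy to estimate in $L^1$. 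You instead use that under the hypothesis $\nabla\cdot E=0$ in \eqref{e:maxwell} (together with $\nabla\cdot A=0$) the function $\varphi$ actually vanishes, so the gauge identity reduces to $\partial_t A=-(E-\langle E\rangle)$ and the solenoidal restriction on the dual is unnecessary. This is a slight simplification consistent with the stated hypotheses; it is worth stating explicitly that you are using $\nabla\cdot E=0$ to drop the scalar potential, and to observe (as the paper's Lemma \ref{Time evolution of vector potential} encodes in the estimate $\|\partial_tA\|_{L^{p'}}\lesssim\|E\|_{L^{p'}}$) that $A\in W^{1,p'}(0,T;L^{p'}_x)\hookrightarrow C([0,T];L^{p'}_x)$, which makes $A\cdot B$ integrable for all $1<p<\infty$, so your caveat about the Sobolev embedding $W^{1,p}\hookrightarrow L^{p'}$ failing for $p<3/2$ is not actually needed.
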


Part (a) extends in particular the $L^3$ result of \cite{KL}. Indeed, for weak solutions of ideal MHD with $u,B\in L^3$ we have $E=B\times u\in L^{3/2}$. On the other hand our proof does not rely on a specific regularisation technique as in \cite{CET}, and merely relies on a weak version of formula \eqref{e:maghel}. Part (b) shows that conservation of magnetic helicity holds even beyond the setting of weak continuity of the quantity $B\cdot E$. As a matter of fact this line of argument furnishes a proof of Taylor conjecture for simply connected domains \cite{FL}.
 
We begin the proof of Theorem \ref{Magnetic helicity preservation theorem} by recalling the following $L^p$ Poincar\'e-type lemma for the Maxwell system \eqref{e:maxwell}:
\begin{lem} \label{Time evolution of vector potential}
Let $1 < p < \infty$, $1/p+1/p' = 1$ and suppose $(B,E)\in L^p\times L^{p'}(\T^3 \times ]0,T[)$ is a solution of \eqref{e:maxwell}.
Then there exist a unique $A \in L^p_tW^{1,p}_x(\T^3 \times ]0,T[)$ and $\varphi \in L^{p'}_t W^{1,p'}_x(\T^3 \times ]0,T[)$ such that
\[
B = \nabla \times A \quad \text{and} \quad \partial_t A + E - \langle E\rangle = \nabla \varphi
\]
with $\langle A\rangle = 0$, $\langle\varphi\rangle = 0$ for a.e. $t$ and $\nabla \cdot A = 0$. Furthermore,
\[\norm{\nabla A}_{L^p} \lesssim \norm{B}_{L^p} \quad \text{and} \quad \norm{\partial_t A}_{L^{p'}} + \norm{\nabla \varphi}_{L^{p'}} \lesssim \norm{E}_{L^{p'}}.\]
\end{lem}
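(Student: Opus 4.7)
The plan is to build $A$ one time slice at a time via the standard Biot--Savart (Coulomb-gauge) construction on $\T^3$, control its spatial derivatives by Calder\'on--Zygmund theory, and then read the distributional time derivative of $A$ directly off the Maxwell equation. At each $t$, given $B(\cdot,t)\in L^p(\T^3)$ with $\langle B\rangle=0$ and $\nabla\cdot B=0$ (the latter being propagated by \eqref{e:maxwell} and understood from the MHD context), I would set
$$A(\cdot,t) \;:=\; \nabla\times(-\Delta)^{-1}B(\cdot,t),$$
interpreted in Fourier modes as $\hat A_k(t)=(ik\times\hat B_k(t))/|k|^2$ for $k\neq 0$ and $\hat A_0=0$. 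Standard vector identities give $\nabla\times A=B$, $\nabla\cdot A=0$ and $\langle A\rangle=0$; the Mikhlin multiplier theorem (with $1<p<\infty$) yields $\|\nabla A(t)\|_{L^p}\lesssim\|B(t)\|_{L^p}$, and integrating in time produces the $L^p_tW^{1,p}_x$ bound on $A$.

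Next I would compute $\partial_t A$ in the sense of distributions. Writing $\mathcal K:=\nabla\times(-\Delta)^{-1}$ for the (purely spatial) Biot--Savart operator, $\mathcal K$ commutes with $\partial_t$ distributionally, so the equation $\partial_t B=-\nabla\times E$ from \eqref{e:maxwell} gives
$$\partial_t A \;=\; \mathcal K\,\partial_t B \;=\; -\mathcal K(\nabla\times E) \;=\; -\bigl(E-\langle E\rangle\bigr),$$
where the last identity uses $\nabla\times\nabla\times=\nabla(\nabla\cdot)-\Delta$ together with $\nabla\cdot E=0$ and the fact that $(-\Delta)^{-1}(-\Delta)$ acts as the projection onto zero-mean fields. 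Hence $\partial_tA + E - \langle E\rangle=0$ and one may simply take $\varphi\equiv 0$; both the regularity $\varphi\in L^{p'}_tW^{1,p'}_x$ and the bound $\|\partial_t A\|_{L^{p'}}+\|\nabla\varphi\|_{L^{p'}}\lesssim\|E\|_{L^{p'}}$ then collapse to the trivial estimate $\|E-\langle E\rangle\|_{L^{p'}}\leq 2\|E\|_{L^{p'}}$.

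Uniqueness is immediate: if $(A',\varphi')$ is another admissible pair, then $A-A'$ is simultaneously curl-free, divergence-free and of zero spatial mean on $\T^3$, hence vanishes; then $\nabla(\varphi-\varphi')=\partial_t(A-A')=0$ together with $\langle\varphi-\varphi'\rangle=0$ forces $\varphi'=\varphi$.

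The only delicate point, which I expect to be the main obstacle, is rigorously justifying that $\mathcal K$ commutes with $\partial_t$ in the distributional sense when $B,E$ are merely $L^p_{t,x}$ and $L^{p'}_{t,x}$. This can be done by testing against an arbitrary $\phi\in C^\infty_c(\T^3\times(0,T))$, moving the (formally self-adjoint on zero-mean fields) operator $\mathcal K$ from $B$ onto $\phi$, applying the weak form of \eqref{e:maxwell} to the smooth divergence-free test field $\mathcal K\phi$, and then transferring $\mathcal K$ back; alternatively, a standard mollification in the time variable reduces everything to the classical identity for smooth $B$ and $E$, the $L^p$ and $L^{p'}$ continuity of $\mathcal K$ and of $\nabla\times$ on the appropriate subspaces allowing one to pass to the limit.
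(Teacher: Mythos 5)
Your proof is correct and takes essentially the same route as the paper, which simply sets $A = -\Delta^{-1}(\nabla\times B)$, $\varphi = \Delta^{-1}\nabla\cdot(\partial_t A + E)$ and invokes Calder\'on--Zygmund estimates. Your further observation that $\varphi\equiv 0$ under the hypothesis $\nabla\cdot E = 0$ is correct and agrees with the paper's formula (since $\nabla\cdot(\partial_t A + E) = \partial_t(\nabla\cdot A) + \nabla\cdot E = 0$), and the commutation of the spatial Fourier multiplier $\nabla\times(-\Delta)^{-1}$ with $\partial_t$, which you flag as delicate, is routine by testing against $\phi\in C_c^\infty(\T^3\times(0,T))$ and moving the self-adjoint multiplier onto $\phi$.
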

Indeed, we set $A=-\Delta^{-1}(\nabla\times B)$ and $\varphi=\Delta^{-1}\nabla\cdot(\partial_tA+E)$ and apply standard Calder\'on-Zygmund estimates for the Laplacian.

\begin{proof}[Proof of Theorem \ref{Magnetic helicity preservation theorem}]
For part (a) suppose $(B,E) \in L^p\times L^{p'}(\T^3 \times ]0,T[)$ is a solution of \eqref{e:maxwell} with $B\cdot E=0$. Let $\eta \in C_c^\infty(]0,T[)$, so that for $\varepsilon > 0$ small enough, $\supp(\eta) \subset ]\varepsilon,T-\varepsilon[$. Furthermore, let $B_\delta=B*\chi_\delta$ be a standard space-time mollification of $B$. By using Lemma \ref{Time evolution of vector potential} and integrating by parts a few times we get
\[\begin{array}{lcl}
& & \displaystyle \int_\varepsilon^{T-\varepsilon} \partial_t \eta(t) \int_{\T^3} A(x,t) \cdot B(x,t) \, dx \, dt \\
&=& \displaystyle \lim_{\delta \searrow 0} \int_\varepsilon^{T-\varepsilon} \partial_t \eta(t) \int_{\T^3} A_\delta(x,t) \cdot B_\delta(x,t) \, dx \, dt \\
&=& \displaystyle \lim_{\delta \searrow 0} \left[ \int_\varepsilon^{T-\varepsilon} \eta(t) \int_{\T^3} \bigl( E_\delta(x,t) - \langle E\rangle - \nabla \varphi_\delta(x,t) \bigr) \cdot B_\delta(x,t) \, dx \, dt \right. \\
&+& \displaystyle \left. \int_\varepsilon^{T-\varepsilon} \eta(t) \int_{\T^3} A_\delta(x,t) \cdot \nabla \times E_\delta(x,t) \, dx \, dt \right] \\
&=& \displaystyle 2 \lim_{\delta \searrow 0} \int_\varepsilon^{T-\varepsilon} \eta(t) \int_{\T^3} E_\delta(x,t) \cdot B_\delta(x,t) \, dx \, dt \\
&=& \displaystyle 2 \int_\varepsilon^{T-\varepsilon} \eta(t) \int_{\T^3} B(x,t) \cdot E(x,t) \, dx \, dt = 0,
\end{array}\]
since $|B||E|\in L^1(\T^3 \times ]0,T[)$ and $B \cdot E = 0$.

\medskip

For part (b) suppose $(B_j,E_j) \in L^p\times L^{p'}(\T^3 \times ]0,T[)$ is a sequence of solutions of \eqref{e:maxwell} with $B_j\cdot E_j=0$ a.e. and assume $B_j \rightharpoonup B$ in $L^2(\T^3 \times ]0,T[)$ and $\sup_{j \in \N} \norm{E_j}_{L^1} < \infty$. We intend to use to the Aubin-Lions Lemma (see e.g.~\cite[Lemma 7.7]{Rou}) to get, up to a subsequence, $A_j \to A$ in $L^2_t L^2_x(\T^3 \times ]0,T[;\R^3)$; then $\nabla \times A = B$, $\nabla \cdot A = 0$ and $\langle A\rangle$ a.e. $t$, and furthermore $\int_{\T^3} A \cdot B\, dx$ is constant in $t$.

First note that $\sup_{j \in \N} \norm{A_j}_{L^2_t W^{1,2}_x} < \infty$. Let us denote $W^{1,4}_\sigma(\T^3;\R^3) \defeq \{w \in W^{1,4}(\T^4;\R^3) \colon \nabla \cdot w = 0\}$. By using the embedding $L^1(\T^3,\R^3) \hookrightarrow (W^{1,4}_\sigma(\T^3,\R^3))^*$ and the formula $\partial_t A_j = - E_j + \langle E\rangle + \nabla \varphi_j$ we obtain
\[\sup_{j \in \N} \norm{\partial_t A_j}_{L^1_t (W^{1,4}_\sigma)^*_x} \lesssim \sup_{j \in \N} \norm{E_j}_{L^1} < \infty,\]
which verifies the assumptions of the Aubin-Lions Lemma.
\end{proof}

\subsection{The 2-dimensional case}

In comparison to the above analysis, let us briefly look at the 2-dimensional case. Here \eqref{MHD2} reduces to
\begin{equation}\label{MHD2-2D}
\partial_t B +\nabla^\perp (u\cdot B^\perp) = 0, 
\end{equation}
where we write $B^\perp=(-B_2,B_1)$ for the vector $B=(B_1,B_2)$, and similarly $\nabla^\perp=(-\partial_2,\partial_1)$. The magnetic potential (stream function) of $B$ is a scalar field $\psi$ such that $\nabla^\perp\psi=B$. As for conserved quantities, total energy and cross-helicity has analogous expressions, but magnetic helicity is replaced by the \emph{mean-square magnetic potential}, defined as
$$
\int_{\T^2}  \abs{\psi}^2 dx.
$$
Mean-square magnetic potential is conserved by smooth solutions, and in 
\cite{CKS} the conservation was shown for weak solutions $(u,B)$ with the regularity $u\in C([0,T]; B^{\alpha_1}_{3,\infty})$ and $B\in C([0,T]; B^{\alpha_2}_{3,\infty})$ for $\alpha_1 + 2 \alpha_2 > 1$. 

Next, observe that \eqref{MHD3} implies that $u\cdot B^\perp$ is a div-curl product. Consequently, if we have a sequence of velocity and magnetic fields $(u_j,B_j)\rightharpoonup (u,B)$ converging weakly in $L^2$ and such that $\{\nabla\cdot u_j\}$ and $\{\nabla\cdot E_j\}$ are in a compact subset of $H^{-1}$, then $u_j\cdot B_j^\perp\overset{*}{\rightharpoonup} u\cdot B^\perp$ in the space of measures. In other words \eqref{MHD2-2D} is stable under weak convergence in $L^2$. Another way of writing \eqref{MHD2-2D} is by using the stream functions of $u$ and $B$. Indeed, if we write $u=\nabla^\perp\phi$ and $B=\nabla^\perp\psi$, with $\langle\phi\rangle=\langle\psi\rangle=0$, then \eqref{MHD2-2D} becomes
\begin{equation} \label{Time evolution of psi in 2D}
\partial_t \psi + J(\phi,\psi) = 0,
\end{equation}
where we write, as usual, $J(\phi,\psi)=\nabla\phi\cdot\nabla^\perp\psi$ for the Jacobian determinant of the mapping $(\phi,\psi) \colon \T^2\to\R^2$. Observe that the same equation appears also for the 2D Euler equations, where we replace $\psi$ by the vorticity $\omega=\partial_1u_2-\partial_2u_1$ and $\phi$ by the velocity potential $v=\nabla^\perp\phi$. However, here we do not assume any coupling between $\phi$ and $\psi$, and treat \eqref{Time evolution of psi in 2D} as a passive scalar equation. 

The form \eqref{Time evolution of psi in 2D} allows us to prove conservation of the mean-square magnetic potential under very mild conditions:

\begin{thm} \label{Helicity preservation theorem}
Suppose that $(u,B) \in C_w([0,T[;L^2(\T^2))$ is a weak solution of \eqref{MHD3} and \eqref{MHD2-2D}. Then the mean-square magnetic potential is conserved.
\end{thm}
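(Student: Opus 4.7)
The plan is to rewrite the equation using stream functions and then carry out a mollification argument. Since $u, B \in C_w([0,T[; L^2(\T^2))$ are divergence-free with zero spatial mean, I would introduce $\phi, \psi \in C_w([0,T[; H^1(\T^2))$ with zero spatial mean such that $u = \nabla^\perp \phi$ and $B = \nabla^\perp \psi$; then \eqref{MHD2-2D} is equivalent to the scalar equation $\partial_t \psi + J(\phi,\psi) = 0$ in $\mathcal{D}'(\T^2 \times ]0,T[)$, as in \eqref{Time evolution of psi in 2D}.

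Let $\chi_\delta$ be a standard even spatial mollifier and set $\psi_\delta = \psi * \chi_\delta$, $\psi_{\delta\delta} = \psi_\delta * \chi_\delta$. Multiplying the mollified equation $\partial_t \psi_\delta + (J(\phi,\psi))_\delta = 0$ by $\psi_\delta$, integrating over $\T^2$, using the divergence form $J(\phi,\psi) = \nabla \cdot(\psi \nabla^\perp \phi)$, and combining integration by parts with the symmetry of $\chi_\delta$, one arrives at the energy identity
\begin{equation*}
\tfrac{1}{2}\tfrac{d}{dt}\int_{\T^2} \psi_\delta^2 \, dx \;=\; -\int_{\T^2} \psi_{\delta\delta}\, J(\phi,\psi)\, dx.
\end{equation*}
Integrated against $\eta \in C_c^\infty(]0,T[)$, conservation reduces to showing that the right-hand side tends to zero as $\delta \to 0$.

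This limit is where the two-dimensional structure enters essentially. By the Coifman-Lions-Meyer-Semmes compensated-compactness theorem, the div-curl product $J(\phi,\psi) = \nabla^\perp \phi \cdot \nabla \psi$ lies in the Hardy space $\mathcal{H}^1(\T^2)$ with $\|J(\phi,\psi)\|_{\mathcal{H}^1} \lesssim \|\nabla \phi\|_{L^2}\|\nabla \psi\|_{L^2}$. On the other hand, the borderline 2D Poincar\'e inequality yields $\|\psi\|_{BMO(\T^2)} \lesssim \|\nabla \psi\|_{L^2}$, and in fact $\psi \in VMO(\T^2)$, so that $\psi_{\delta\delta} \to \psi$ in the $BMO$ norm. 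By the $\mathcal{H}^1$-$BMO$ duality, $\int \psi_{\delta\delta} J(\phi,\psi)\,dx \to \int \psi J(\phi,\psi)\,dx$ for a.e.~$t$, and the uniform bound $\|\psi\|_{BMO}\|J(\phi,\psi)\|_{\mathcal{H}^1} \in L^\infty_t$ permits dominated convergence under the $\eta(t)$ integral.

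To identify the limit as zero, I would approximate $\psi$ in $H^1$ by smooth functions $\psi_n$: for each $n$ the identity $\psi_n J(\phi,\psi_n) = \tfrac{1}{2}\nabla \cdot(\psi_n^2 \nabla^\perp \phi)$ integrates to zero on the torus, and passing to the limit via the bilinear continuity of CLMS gives $\int \psi J(\phi,\psi)\,dx = 0$. Combining everything, $\int \eta'(t) \int_{\T^2} \psi^2\,dx\,dt = 0$ for every $\eta$, so $t \mapsto \int \psi^2\,dx$ is constant in $t$, and conservation follows from the weak continuity of $\psi$. The main obstacle is precisely this limit passage: the naive Lebesgue pairing between $\psi \notin L^\infty$ and $J(\phi,\psi) \in L^1$ fails, and the argument depends crucially on the two-dimensional gain placing $J(\phi,\psi)$ in $\mathcal{H}^1$ together with the borderline embedding $H^1(\T^2) \hookrightarrow BMO(\T^2)$.
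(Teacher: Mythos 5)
Your proof is correct and follows essentially the same route as the paper: rewrite \eqref{MHD2-2D} as the scalar transport equation \eqref{Time evolution of psi in 2D} for the stream function, regularise, and exploit the Coifman--Lions--Meyer--Semmes $\mathcal{H}^1$ bound (Lemma \ref{Jacobian estimate}) together with the cancellation $\int_{\T^2}\psi\, J(\phi,\psi)\,dx = 0$ to pass to the limit. The paper leaves the regularisation step as ``standard''; your version spells it out, and your device of moving the mollifier onto $\psi$ (so that the pairing becomes $\int\psi_{\delta\delta}\, J(\phi,\psi)$ with the unmollified Jacobian) is a slightly cleaner implementation than the Constantin--E--Titi-style commutator estimate the paper implicitly has in mind, since it replaces the commutator bound by the convergence $\psi_{\delta\delta}\to\psi$ in $\operatorname{BMO}$ (using $W^{1,2}(\T^2)\hookrightarrow\operatorname{VMO}(\T^2)$) paired against the uniformly $\mathcal H^1$-bounded Jacobian. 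One trivial slip: with either of the two (mutually opposite-signed) conventions for $J$, one has $\psi\, J(\phi,\psi)=\mp\tfrac12\nabla\cdot(\psi^2\nabla^\perp\phi)$ rather than $+\tfrac12\nabla\cdot(\psi^2\nabla^\perp\phi)$; the sign is irrelevant since the integral of the divergence over $\T^2$ vanishes either way, but it is worth fixing.
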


We point out that the analogous result for the 2D Euler equations, namely the conservation of enstrophy $\tfrac{1}{2}\int|\omega|^2dx$ is well-known \cite{Eyi2,LMN}, and the proof is based on the theory of renormalised solutions. Here we give an alternative, short proof, again emphasising that compensated compactness lies at the heart of the matter. For the proof we first recall the $\mathcal{H}^1$ regularity theory of Coifman, Lions, Meyer and Semmes from~\cite{CLMS}, more precisely the following adaptation of the classical Wente inequality to the torus $\T^2$ (see~\cite[Theorem A.1]{FMS}):
\begin{lem} \label{Jacobian estimate}
When $(f_1,f_2,f_3) \in W^{1,2}(\T^2,\R^3)$, we have
\begin{align} \label{Jacobian inequality}
\int_{\T^2} f_1(x) J_{(f_2,f_3)}(x) \, dx
&\lesssim \norm{f_1}_{\operatorname{BMO}(\T^2)} \norm{J_{(f_2,f_3)}}_{\mathcal{H}^1(\T^2)} \notag \\
&\lesssim \norm{\nabla f_1}_{L^2(\T^2)} \norm{\nabla f_2}_{L^2(\T^2)} \norm{\nabla f_3}_{L^2(\T^2)}.
\end{align}
\end{lem}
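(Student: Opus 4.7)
The plan is to prove the two inequalities in \eqref{Jacobian inequality} separately. The first is a direct application of Fefferman's duality $\mathcal{H}^1(\T^2)^\ast = \operatorname{BMO}(\T^2)$: from $\nabla\cdot\nabla^\perp = 0$ and integration by parts on $\T^2$, the Jacobian satisfies $\int_{\T^2} J_{(f_2,f_3)}\,dx = 0$ and is hence a legitimate candidate for membership in $\mathcal{H}^1(\T^2)$. Once the second inequality is established, the duality pairing
\[
\int_{\T^2} f_1\,J_{(f_2,f_3)}\,dx \;\leq\; \norm{f_1}_{\operatorname{BMO}(\T^2)}\,\norm{J_{(f_2,f_3)}}_{\mathcal{H}^1(\T^2)}
\]
is immediate.

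The substantive content is therefore the second inequality, which is the Coifman--Lions--Meyer--Semmes theorem in its toroidal incarnation. After reducing to $f_2,f_3\in C^\infty(\T^2)$ by density, the structural input is the div-curl identity
\[
J_{(f_2,f_3)} \;=\; \nabla\cdot\bigl(f_2\,\nabla^\perp f_3\bigr),
\]
obtained from $\nabla\cdot\nabla^\perp f_3 = 0$, which exhibits the Jacobian as a compensated product of the gradient $\nabla f_2$ with the divergence-free field $\nabla^\perp f_3$. The CLMS theorem asserts that such compensated products of $L^2$ factors lie in the Hardy space $\mathcal{H}^1$. On $\T^2$ I would derive the estimate either by (i) building an atomic decomposition of $J_{(f_2,f_3)}$ along a Whitney-type covering of $\T^2$ by balls $B$, subtracting the local mean $\langle f_2\rangle_B$ (free, since $\nabla^\perp f_3$ is divergence-free) and controlling each atom via the Poincaré--Sobolev inequality $\norm{f_2-\langle f_2\rangle_B}_{L^2(B)}\lesssim r\norm{\nabla f_2}_{L^2(B)}$; or (ii) controlling the grand maximal function of $J_{(f_2,f_3)}$ directly, via the same Poincaré--Sobolev input followed by a Fefferman--Stein type maximal-function argument. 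Either route yields
\[
\norm{J_{(f_2,f_3)}}_{\mathcal{H}^1(\T^2)} \;\lesssim\; \norm{\nabla f_2}_{L^2(\T^2)}\,\norm{\nabla f_3}_{L^2(\T^2)}.
\]

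The principal obstacle is the CLMS $\mathcal{H}^1$ estimate itself, which requires a nontrivial piece of real-variable harmonic analysis beyond the elementary divergence form of the Jacobian. The transfer from $\R^2$ to $\T^2$ adds the further technical point of pinning down a definition of $\mathcal{H}^1(\T^2)$ on mean-zero distributions and verifying that localization to balls of radius $r\leq 1$ inside a fundamental domain fits that framework. Both are handled via a smooth partition of unity reducing to the Euclidean CLMS argument of \cite{CLMS}, as carried out in detail in the proof of Theorem A.1 of \cite{FMS}; once these harmonic-analytic facts are in place the two inequalities follow immediately.
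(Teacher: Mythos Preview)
The paper does not actually prove this lemma; it is stated as a quotation of known results, with the first inequality attributed to $\mathcal{H}^1$--$\operatorname{BMO}$ duality and the second to the Coifman--Lions--Meyer--Semmes theorem \cite{CLMS}, with the adaptation to $\T^2$ deferred to \cite[Theorem~A.1]{FMS}. Your proposal correctly identifies precisely these ingredients and cites the same two references, so your treatment matches the paper's. One small point you leave implicit: the passage from $\norm{f_1}_{\operatorname{BMO}(\T^2)}$ to $\norm{\nabla f_1}_{L^2(\T^2)}$ in the second inequality also requires the (dimension-critical) Poincar\'e embedding $W^{1,2}(\T^2)\hookrightarrow\operatorname{BMO}(\T^2)$, but this is the same Poincar\'e--Sobolev input you already invoke for the atomic decomposition, so nothing is missing.
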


The left-hand side of \eqref{Jacobian inequality} can be understood in terms of $\mathcal{H}^1$--$\text{BMO}$ duality, but we in fact only require \eqref{Jacobian inequality} where the left-hand side is Lebesgue integrable.
 
\begin{proof}[Proof of Theorem \ref{Helicity preservation theorem}]
First let us assume that $u$ and $B$ are smooth. Then, using \eqref{Time evolution of psi in 2D}, we obtain after integration by parts 	
\begin{equation}\label{e:2Dcomputation}
\frac{d}{dt} \frac{1}{2} \int_{\T^2} |\psi|^2 dx = \int_{\mathbb{T}^2} \psi J(\psi,\phi) \, dx = -\int_{\mathbb{T}^2} \phi J(\psi,\psi) \, dx = 0.
\end{equation}
For the general case note first that under the assumption on $(u,B)$, using the compact embedding $W^{1,2}\hookrightarrow L^2$ the stream functions $\phi,\psi$ belong to $C([0,T];L^2(\T^2))$ with
$\nabla\phi,\nabla\psi\in C_w([0,T];L^2(\T^2))$. Then the computation \eqref{e:2Dcomputation} can be carried out using standard regularisation of $\psi,\phi$ and the uniform bound in \eqref{Jacobian inequality}.  
\end{proof}
 
Theorem \ref{Helicity preservation theorem} implies the following corollary:

\begin{cor} \label{MHD 2D corollary}
Suppose $(u,B) \in C_w([0,T[;L^2(\T^2))$ is a weak solution of \eqref{MHD3} and \eqref{MHD2-2D}. Then either $B \equiv 0$ or there exists a constant $c > 0$ such that $\int_{\T^2} \abs{B}^2 dx \ge c$ for every $t \in [0,T[$.
\end{cor}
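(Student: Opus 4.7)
The plan is to deduce the corollary directly from Theorem \ref{Helicity preservation theorem} using the Poincar\'e inequality on $\T^2$. At each time $t$ the field $B(\cdot,t)\in L^2(\T^2)$ is divergence-free with zero mean (the latter follows from $\langle B\rangle=0$, which holds automatically on $\T^2$ for any divergence-free $L^2$ field arising from this setup), so it admits a unique mean-zero stream function $\psi(\cdot,t)\in W^{1,2}(\T^2)$ with $\nabla^\perp\psi=B$. A direct componentwise computation shows $|\nabla\psi|=|B|$ pointwise, hence $\norm{\nabla\psi(\cdot,t)}_{L^2}=\norm{B(\cdot,t)}_{L^2}$.

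By Theorem \ref{Helicity preservation theorem} the quantity
\[
M \defeq \int_{\T^2}|\psi(x,t)|^2\,dx
\]
is independent of $t\in[0,T[$. Next I would invoke the Poincar\'e inequality for mean-zero functions on $\T^2$: there exists $C=C(\T^2)>0$ such that $\norm{\psi(\cdot,t)}_{L^2}^2 \le C\norm{\nabla\psi(\cdot,t)}_{L^2}^2$ for every $t$. Combining these two facts gives
\[
\int_{\T^2}|B(x,t)|^2\,dx \;=\; \norm{\nabla\psi(\cdot,t)}_{L^2}^2 \;\ge\; \frac{1}{C}\,\norm{\psi(\cdot,t)}_{L^2}^2 \;=\; \frac{M}{C}
\]
for every $t\in[0,T[$.

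Finally I would read off the dichotomy: if $M=0$, then $\psi(\cdot,t)\equiv 0$ for every $t$, so $B=\nabla^\perp\psi\equiv 0$ identically on $\T^2\times[0,T[$. Otherwise $M>0$, and $c\defeq M/C>0$ yields the claimed uniform lower bound. There is no real obstacle here beyond checking these two elementary ingredients; the only mild point to verify is that weak continuity of $B$ in time ensures the conserved value $M$ is attained pointwise in $t$ rather than merely almost everywhere, so the lower bound holds on all of $[0,T[$.
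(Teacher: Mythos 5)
Your proof is correct and follows essentially the same route as the paper: invoke Theorem \ref{Helicity preservation theorem} to get conservation of $\int_{\T^2}|\psi|^2\,dx$, then apply the Poincar\'e inequality for the mean-zero stream function to bound $\int_{\T^2}|B|^2\,dx = \int_{\T^2}|\nabla\psi|^2\,dx$ from below by a constant multiple of the conserved quantity.
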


\begin{proof}
The proof follows by using the Poincar\'{e} inequality at every $t \in [0,T[$ to estimate $\int_{\T^2} \abs{B(x,t)}^2 dx = \int_{\T^2} \abs{\nabla \psi(x,t)}^2 dx \ge C \int_{\T^2} \abs{\psi(x,t)}^2 dx$.
\end{proof}

Thus, in 2D even if the kinetic and magnetic energies $\int_{\T^2} \abs{u}^2 dx$ and $\int_{\T^2} \abs{B}^2 dx$ may fluctuate (and indeed, numerical experiments indicate anomalous dissipation of the total energy even in 2D \cite{BiskampWelter}), by Corollary \ref{MHD 2D corollary} it is impossible for the magnetic energy to dissipate to zero. 

Finally, we remark that although it is natural to ask whether an analogue of Theorem \ref{Helicity preservation theorem} holds in the whole space $\R^2$, in fact square integrable divergence-free vector fields do not in general have a square integrable stream function in $\R^2$. This is shown in Appendix \ref{The ill-definedness of magnetic helicity and mean-square magnetic potential in the whole space}.

\section{Plane-wave analysis}

Recall that the ideal MHD system in 3D can be written for a \emph{state variable} $(u,S,B,E)$ in terms of the conservation laws \eqref{Linearised MHD1}-\eqref{Linearised MHD3} with the constitutive set $K$, defined in \eqref{e:K}. 
The framework introduced by Tartar amounts to an analysis of one-dimensional oscillations compatible with \eqref{Linearised MHD1}-\eqref{Linearised MHD3} -- the wave-cone -- and then the interaction of the wave-cone with the constitutive set. We carry out this analysis in this section.

\subsection{The wave cone and the lamination convex hull} \label{The wave cone and the lamination convex hull}
\emph{Plane waves} are one-dimensional oscillations of the form $(x,t) \mapsto h((x,t) \cdot \xi) V$ with
\[V = (u,S,B,E) \in \R^{15},\]
$\xi = (\xi_x,\xi_t) \in (\R^3 \times \R) \setminus \{0\}$ and $h \colon \R \to \R$. For a plane wave solution, \eqref{Linearised MHD1}--\eqref{Linearised MHD3} become
\begin{align}
& \xi_x \cdot u = \xi_x \cdot B = 0, \label{Wave cone condition 1} \\
& \xi_t u + S \xi_x = 0, \label{Wave cone condition 2} \\
& \xi_t B + \xi_x \times E = 0 . \label{Wave cone condition 3}
\end{align}
In the following, we will write, with a slight abuse of notation, \eqref{Wave cone condition 1}--\eqref{Wave cone condition 3} in the concise form $V \xi = 0$.

\begin{defin}
The \emph{wave cone} for ideal MHD is
\[\Lambda_0 = \{V = (u,S,B,E) \in \R^{15} \colon \exists \xi \in \R^{4} \setminus \{0\} \text{ such that \eqref{Wave cone condition 1}--\eqref{Wave cone condition 3} hold}\}.\]
We also denote
\[\Lambda = \{V = (u,S,B,E) \in \R^{15} \colon \exists \xi \in (\R^3 \setminus \{0\}) \times \R \text{ such that \eqref{Wave cone condition 1}--\eqref{Wave cone condition 3} hold}\}.\]
If $V_1,V_2 \in \R^{15}$ satisfy $V_1 - V_2 \in \Lambda$, then $[V_1,V_2] \subset \R^{15}$ is called a $\Lambda$-\emph{segment}.
\end{defin}

In the convex integration process we will use $\Lambda$ instead of $\Lambda_0$, as the requirement $\xi_x \neq 0$ is crucial to many of the arguments. We next define lamination convex and $\Lambda$-convex hulls.

Given a set $Y \subset \R^{15}$ we denote $Y^{0,\Lambda} \defeq Y$ and define inductively
\[Y^{N+1,\Lambda} \defeq Y^{N,\Lambda} \cup \{\lambda V + (1-\lambda) W \colon \lambda \in [0,1], \; V, W \in Y^{N,\Lambda}, \; V-W \in \Lambda\}\]
for all $N \in \N_0$.

\begin{defin}
When $Y \subset \R^{15}$, the \emph{lamination convex hull} of $Y$ (with respect to $\Lambda$) is
\[Y^{lc,\Lambda} \defeq \bigcup_{N \ge 0} Y^{N,\Lambda}.\]
\end{defin}

It is well-known that semiconvex hulls can be expressed by duality in terms of measures, see e.g.~\cite{Pedregal},~\cite{Kir} and~\cite{CS}.

\begin{defin}
Let $Y \subset \R^{15}$. The set of \emph{laminates of finite order} (with respect to $\Lambda$), denoted $\mathcal{L}(Y)$, is the smallest class of atomic probability measures supported on $Y$ with the following properties:
\begin{enumerate}[\upshape (i)]
\item $\mathcal{L}(Y)$ contains all the Dirac masses with support in $Y$.

\item $\mathcal{L}(Y)$ is closed under splitting along $\Lambda$-segments inside $Y$.
\end{enumerate}
Condition (ii) means that if $\nu = \sum_{i=1}^M \nu_i \delta_{V_i} \in \mathcal{L}(Y)$ and $V_M \in [Z_1,Z_2] \subset Y$ with $Z_1-Z_2 \in \Lambda$, then
\[\sum_{i=1}^{M-1} \nu_i \delta_{V_i} + \nu_M (\lambda \delta_{Z_1} + (1-\lambda) \delta_{Z_2}) \in \mathcal{L}(Y),\]
where $\lambda \in [0,1]$ such that $V_M = \lambda Z_1 + (1-\lambda) Z_2$.
\end{defin}

\begin{rem} \label{Remark on lamination convex hull}
Given $V \in Y^{N,\Lambda}$, we may write $V = \lambda_1 V_1 + \lambda_2 V_2$, where
\[V_1,V_2 \in Y^{N-1,\Lambda}, \qquad 0 \le \lambda_1 \le 1, \qquad \lambda_1 + \lambda_2 = 1, \qquad V_1-V_2 \in \Lambda.\]
Similarly, we write $V_1 = \lambda_{1,1} V_{1,1} + \lambda_{1,2} V_{1,2}$. Repeating this process, by induction we arrive at a finite-order laminate with support in $Y$ and barycentre $V$:
\[\nu = \sum_{\mathbf{j} \in \{1,2\}^N} \mu_\mathbf{j} \delta_{V_\mathbf{j}}, \qquad \text{supp}(\nu) \subset Y, \qquad \bar{\nu} = V,\]
where $\mu_\mathbf{j} = \mu_{j_1,\ldots,j_N} = \lambda_{j_1} \ldots \lambda_{j_1,\ldots,j_N}\in [0,1]$.
\end{rem}

In addition to the lamination convex hull, another, potentially larger, hull is used in convex integration theory. In order to define it we recall the notion of $\Lambda$-convex functions.

\begin{defin}
A function $f \colon \R^{15} \to \R$ is said to be \emph{$\Lambda$-convex} if the function $t \mapsto f(V + tW) \colon \R \to \R$ is convex for every $V \in \R^{15}$ and every $W \in \Lambda$.
\end{defin}

While the lamination convex hull is defined by taking convex combinations, the $\Lambda$-convex hull $Y^\Lambda$ of $Y \subset \R^{15}$ is defined as the set of points that cannot be separated from $Y$ by $\Lambda$-convex functions.

\begin{defin}
When $Y \subset \R^{15}$ is compact, the $\Lambda$-\emph{convex hull} $Y^\Lambda$ consists of points $W \in \R^{15}$ with the following property: if $f \colon \R^{15} \to \R$ is $\Lambda$-convex and $f|_Y \le 0$, then $f(W) \le 0$.
\end{defin}

\subsection{Normalisations of the constitutive set}
In order to produce bounded solutions of 3D MHD we consider normalised versions of the constitutive set $K$. We wish to prescribe both the total energy density $(\abs{u}^2+\abs{B}^2)/2$ and the cross helicity density $u \cdot B$, but for this aim it is obviously not enough to prescribe $\abs{u}$ and $\abs{B}$. However, by using the \emph{Els\"{a}sser variables}
\[z^\pm \defeq u \pm B\]
we can write $(\abs{u}^2 + \abs{B}^2)/2 = (\abs{z^+}^2 + \abs{z^-}^2)/4$ and $u \cdot B = (\abs{z^+}^2-\abs{z^-}^2)/4$, and thus it suffices to prescribe $\abs{z^\pm}$. This motivates the normalisation given below; recall that $K \defeq \{(u,S,B,E) \colon S = u \otimes u - B \otimes B + \Pi I, \; \Pi \in \R, \; E = B \times u\}$.

\begin{defin}\label{d:Krs}
Whenever $r,s > 0$, we denote
\begin{equation}\label{e:Krs}
K_{r,s} \defeq \{(u,S,B,E) \in K \colon \abs{u+B} = r, \; \abs{u-B} = s, \; \abs{\Pi} \le rs\}.
\end{equation}
\end{defin}

As pointed out in Section \ref{s:system}, the Maxwell system is essentially closed under weak convergence; the scalar product $B\cdot E$ is weakly continuous. As an immediate consequence the $\Lambda$-convex hull $K_{r,s}^\Lambda$ has empty interior (in 3D, and also in 2D). Indeed, Tartar's result in Example 4 \cite{Tartar} is based on the fact that the quadratic expression $Q(u,S,B,E) \defeq B \cdot E$ satisfies 
$$
Q(V)=0\quad\textrm{ for all }V\in \Lambda_0,
$$
and consequently, $Q$ is $\Lambda_0$-affine. Then we deduce that 
$$
K_{r,s}^{\Lambda_0}\subset \mathscr{M},
$$
where $\mathscr{M}$ is the set in \eqref{e:M}.

\bigskip

In 3D, assume now $B \times u \neq 0$. Then \eqref{Wave cone condition 1} implies, up to normalisation, that $\xi_x = B \times u$, and then \eqref{Wave cone condition 3} yields $\xi_t B = - (B \times u) \times E = (E \cdot u) B - (E \cdot B) u = (E \cdot u) B$. Thus, whenever $B \times u \neq 0$, \eqref{Wave cone condition 1}--\eqref{Wave cone condition 3} reduce to the conditions
\begin{equation} \label{Wave cone condition 4}
S (B \times u) + (E \cdot u) u = 0, \qquad B \cdot E = 0
\end{equation}
which is an easier condition to check in the sequel.





\section{Discussion of the convex integration scheme in 3D} \label{Discussion of the convex integration scheme in 3D}
The standard way of finding nontrivial compactly supported solutions for equations of fluid dynamics was first presented in~\cite{DLS09} and axiomatised in~\cite{Szekelyhidi}. We describe it briefly in the case of Theorem \ref{MHD 3D theorem}.

With a bounded domain $\Omega \subset \R^4$ fixed, it suffices to find a solution $V$ of the relaxed MHD equations $\mathcal{L}(V) = 0$ such that $V(x,t) \in K_{2,1}$ a.e. in $\Omega$ ($K_{2,1}$ defined in \eqref{e:Krs}) and $V(x,t) = 0$ a.e. outside $\Omega$. One intends to construct $V$ as a limit of subsolutions, that is, mappings $V_\ell$ solving $\mathcal{L}(V_\ell) = 0$ and taking values in $K^{lc,\Lambda}_{2,1}$.

The basic building blocks of the construction are plane waves which oscillate in directions of $\Lambda$. In order to prevent harmful interference of the waves and to make the eventual solutions compactly supported, one needs to localise the plane waves. The localisation is customarily carried out by constructing potentials. This causes small error terms, and in order for each $V_\ell$ to take values in the lamination convex hull, one hopes to prove that the hull has non-empty interior. The specifics of the convex integration scheme vary (see e.g.~\cite{DLS09},~\cite{CFG} and~\cite{CS} for three different approaches in fluid dynamics and~\cite{Szekelyhidi,Kir} for a more general discussion).

In the case of 3D MHD, the process is more subtle, as $K^{lc,\Lambda}_{2,1}$ has an empty interior, more precisely $K^{lc,\Lambda}_{2,1}\subset\mathscr{M}$. Therefore, although we may proceed with the 'symmetric (fluid) part' $u$ and $S$, the 'anti-symmetric (electromagnetic) part' $B$ and $E$ needs special attention.

As a first step towards overcoming the emptiness of $\text{int}(K^{lc,\Lambda}_{2,1})$, we construct a pair of non-linear potential operators $P_B$ and $P_E$ that satisfy $\nabla \cdot P_B[\varphi,\psi]= 0$, $\partial_t P_B[\varphi,\psi) + \nabla \times P_E[\varphi,\psi] = 0$ and $P_E[\varphi,\psi] \cdot P_E[\varphi,\psi] = 0$ for all $\varphi,\psi \in C^\infty(\R^4)$. (For $u$ and $S$ we simply use the potentials in \cite{DLS09} for the Euler equations.) We add the localised plane waves \emph{within} $P_B$ and $P_E$; despite their non-linearity, $P_B$ and $P_E$ have cancellation properties which allow them to map suitable sums of localised plane waves to sums of localised plane waves (up to a small error term).


As a drawback, $P_B$ and $P_E$ do not allow oscillating plane waves for every $\Lambda$-segment -- their applicability depends not only on the direction but also on the location of the segment. We consider $\Lambda$-segments for which $P_B$ and $P_E$ give plane waves and call them \emph{good $\Lambda$-segments} or $\Lambda_g$-segments. This leads us to study $K_{2,1}^{lc,\Lambda_g}$, the restricted lamination convex hull of $K_{2,1}$ in terms of $\Lambda_g$.

A priori, $\Lambda_g$ is a rather large subset of $\Lambda$-segments. However, even though $K_{2,1}^{lc,\Lambda}$ has non-empty interior relative to $\mathscr{M}$, the electromagnetic part of $K_{2,1}^{lc,\Lambda_g}$ is rigid: the constraint $E = B \times u$ holds for all $(u,S,B,E) \in K^{lc,\Lambda}_{2,1}$. Nevertheless, as the in-approximation formulation of convex integration shows, the iterative step happens at relatively open sets and it is a limit procedure which leads to the inclusion in closed sets. Thus, in this case the size of $\Lambda_g$ saves the day; as it turns out, for \emph{relatively open} subsets $U \subset \mathscr{M}$ we have $U^{lc,\Lambda} = U^{lc,\Lambda_g}$. This eventually allows us to apply the Baire category framework of convex integration in $\U_{2,1} \defeq \text{int}_{\mathscr{M}}(K^{lc,\Lambda}_{2,1})$. We present useful characterisations of $\U_{2,1}$ in Theorem \ref{Hull theorem}; in particular, $\operatorname{int}_{\mathscr{M}}(K^{lc,\Lambda}_{2,1}) = \cup_{0 \le \tau < 1} K_{2\tau,\tau}^{lc,\Lambda}$. Theorem \ref{Hull theorem} is the most technically difficult part of the paper and the heart of the convex integration scheme. The proof of Theorem \ref{MHD 3D theorem} is then completed in \textsection \ref{The proof of MHD 3D theorem} .

Notice that actually, we do not compute the exact hull $K^{lc,\Lambda}_{2,1}$. However, the formula $\operatorname{int}_{\mathscr{M}}(K^{lc,\Lambda}_{2,1}) = \cup_{0 \le \tau < 1} K_{2\tau,\tau}^{lc,\Lambda}$ turns out to give us enough information about $K^{lc,\Lambda}_{2,1}$. The formula is used in a similar manner as in~\cite{CS}.

\section{Potentials in 3D} \label{Potentials in 3D}
We wish to find potentials corresponding to $\Lambda$-segments. For the fluid variables $(u,S)$, we simply use the potentials of~\cite{DLS09,DLS10} for the Euler equations. In the case of the electromagnetic variables $(B,E)$, the question about existence of potentials is more subtle because of the non-linear constraint $B \cdot E = 0$ that the potentials need to obey. This issue is studied in \textsection \ref{Maxwell two-forms}--\ref{Potentials for good Lambda-segments}.

%
%
%

\subsection{Potentials for the fluid side} \label{Potentials for the symmetric side}
We recall from~\cite{DLS09,DLS10} that potentials for the fluid part, i.e., the variables $u$ and $S$, can be obtained as follows. First of all, recall that
\eqref{Linearised MHD1}-\eqref{Linearised MHD2} can be written equivalently for the symmetric $4\times 4$ matrix
\begin{equation}\label{e:U}
U=\begin{pmatrix}S & u \\u^T & 0\end{pmatrix}
\end{equation}
as $\nabla_{x,t}\cdot U=0$. With this notation \eqref{Wave cone condition 1}-\eqref{Wave cone condition 2} (i.e. belonging to the wave-cone) is equivalent to $U\xi=0$ for some $\xi\in\R^4\setminus\{0\}$. Let us denote $\R^{4 \times 4}_{\operatorname{sym},0} \defeq \{U \in \R^{4 \times 4}_{\operatorname{sym}} \colon U_{4,4} = 0\}$.
\begin{lem}\label{l:fluidpotential}
Suppose $U\in \R^{4 \times 4}_{\operatorname{sym},0}$ such that $U \xi = 0$ for some $(\xi_x,\xi_t) \in (\R^3 \setminus \{0\}) \times \R$. Then there exists $P_U \colon C^\infty(\R^3 \times \R) \to C^\infty(\R^3 \times \R;\R^{4 \times 4}_{\operatorname{sym},0})$ with the following properties:
\begin{enumerate}[\upshape (i)]
\item $\nabla \cdot P_U[\phi] = 0$ for every $\phi \in C^\infty(\R^3 \times \R)$.
\item If $\phi(x,t)=h((x,t) \cdot \xi)$ for some $h\in C^{\infty}(\R^3\times\R)$, then we have $P_U[\phi](x,t) = h^{\prime \prime}((x,t) \cdot \xi) U$ for all $(x,t) \in \R^3 \times \R$.
\end{enumerate}
\end{lem}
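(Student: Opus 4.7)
The lemma is essentially the Euler potential construction from \cite{DLS09} (see also \cite{DLS10}), adapted to the $4$-dimensional spacetime setting. My plan is to realise $P_U$ as a constant-coefficient second-order differential operator $P_U[\phi] = m^U(\partial_{x,t})\phi$, where $m^U \colon \R^4 \to \R^{4\times 4}_{\operatorname{sym},0}$ is a homogeneous matrix-valued polynomial of degree $2$ satisfying (a) $m^U(\eta)\eta = 0$ for all $\eta \in \R^4$, and (b) $m^U(\xi) = U$. Property (i) then follows from (a), since the symbol of $\nabla_{x,t}\cdot P_U[\phi]$ is $m^U(\eta)\eta$; and property (ii) follows from (b), since a plane wave $\phi = h(\xi\cdot(x,t))$ satisfies $\partial^\alpha\phi = \xi^\alpha h^{(|\alpha|)}(\xi\cdot(x,t))$, giving $P_U[\phi] = m^U(\xi)\,h''(\xi\cdot(x,t)) = U\,h''(\xi\cdot(x,t))$.

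A direct ansatz, motivated by $U\xi = 0$ and $U = U^T$, is
\[
m^U_0(\eta) = \frac{1}{|\xi|^4}\Bigl[(\xi\cdot\eta)^2 U - (\xi\cdot\eta)\bigl((U\eta)\otimes\xi + \xi\otimes(U\eta)\bigr) + (\eta\cdot U\eta)\,\xi\otimes\xi\Bigr],
\]
which is a degree-$2$ polynomial in $\eta$ with symmetric matrix output. A short computation using $U\xi = 0$ verifies both $m^U_0(\xi) = U$ (the last two terms vanish at $\eta = \xi$ because $U\xi = 0$) and $m^U_0(\eta)\eta = 0$ (the Term~1 and Term~2 contributions cancel on $U\eta$, while the remaining scalar parts cancel on the direction $\xi$). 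When $\xi_t = 0$ one has $\xi_4 = 0$ and, together with $U_{4,4} = 0$, this forces $(m^U_0)_{4,4} \equiv 0$; in that case one can take $m^U = m^U_0$ directly.

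The main obstacle I anticipate is the case $\xi_t \neq 0$, where the ansatz $m^U_0$ has a non-trivial $(4,4)$-entry
\[
(m^U_0)_{4,4}(\eta) = \frac{\xi_4}{|\xi|^4}\bigl[-2(\xi\cdot\eta)(\eta\cdot U e_4) + (\eta\cdot U\eta)\,\xi_4\bigr],
\]
so $m^U_0$ fails to take values in $\R^{4\times 4}_{\operatorname{sym},0}$. Removing this entry naively by subtracting a scalar multiple of $e_4\otimes e_4$ breaks the divergence-free identity $m^U(\eta)\eta = 0$; I would show that no rank-one or symmetric rank-two correction of the form $f(\eta)(a\otimes b + b\otimes a)$ can simultaneously maintain $m^U(\eta)\eta = 0$ while removing the $(4,4)$-entry, so a structural modification is required. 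The fix, which is the essential algebraic content of the construction in \cite{DLS09, DLS10}, introduces additional terms built from $e_4$ and the vector $U e_4 \in \xi^\perp \cap e_4^\perp$ (using $U\xi = 0$, $U_{4,4}=0$, and symmetry), arranged so that the three conditions $m^U(\eta)\eta = 0$, $m^U(\xi) = U$, and $m^U_{4,4} \equiv 0$ hold simultaneously. Once this modified symbol $m^U$ is in hand, both (i) and (ii) follow immediately, and the operator $P_U$ is defined.
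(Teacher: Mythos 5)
Your strategy — realising $P_U$ as a constant-coefficient second-order operator $m^U(\partial_{x,t})$ with a homogeneous degree-two matrix-valued symbol satisfying $m^U(\eta)\eta=0$, $(m^U)^T=m^U$, $(m^U)_{4,4}\equiv 0$ and $m^U(\xi)=U$ — is exactly the approach the paper takes, and your reductions of (i) and (ii) to these four symbol conditions are correct. The direct ansatz $m^U_0$ is also verified correctly: it satisfies $m^U_0(\eta)\eta=0$ and $m^U_0(\xi)=U$, and its $(4,4)$-entry is as you compute, which vanishes precisely when $\xi_t=\xi_4=0$.

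However, there is a genuine gap: the case $\xi_t\ne 0$ is not resolved. You explicitly identify that $m^U_0$ fails there, observe that a naive correction will not do, and then state that ``a structural modification is required'' whose ``essential algebraic content'' is in \cite{DLS09,DLS10} — but you do not carry it out. This is the heart of the lemma: the claim is for \emph{every} $(\xi_x,\xi_t)\in(\R^3\setminus\{0\})\times\R$, and the regime $\xi_t\ne 0$ is where the constraint $(m^U)_{4,4}\equiv 0$ genuinely competes with $m^U(\eta)\eta=0$. Deferring to \cite{DLS09,DLS10} is also not quite adequate, since in those references the algebraic constraint is a trace condition on the symmetric tensor rather than the vanishing of a distinguished diagonal entry, so the adaptation is not automatic. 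The paper resolves this uniformly (without casework in $\xi_t$) via the bilinear building blocks
\[
P_{a,b}(\eta)=\tfrac12\bigl(R\eta\otimes Q\eta+Q\eta\otimes R\eta\bigr),\qquad R=a\otimes\xi-\xi\otimes a,\quad Q=b\otimes\xi-\xi\otimes b,
\]
for $a,b\perp\xi$ with $a\perp e_4$; here antisymmetry of $R,Q$ gives $P_{a,b}(\eta)\eta=0$ automatically, $P_{a,b}(\xi)$ is proportional to $\tfrac12(a\otimes b+b\otimes a)$, and the constraint on the $(4,4)$-entry is traced through the choice of $a$. One then decomposes $U=\sum_i\tfrac12(a_i\otimes b_i+b_i\otimes a_i)$ with $a_i,b_i\perp\xi$, $a_i\perp e_4$, and sums. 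To close your argument you would need to either carry out this building-block construction (checking that the $(4,4)$-entries cancel after summing over $i$) or produce an explicit replacement for $m^U_0$ in the $\xi_t\ne 0$ case; as written, the proposal asserts the existence of the correction without exhibiting it.
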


This lemma essentially follows from the proof of \cite[Proposition 3.2]{DLS09}. For the convenience of the reader we sketch a simplified proof, following the exposition in \cite{Szekelyhidi1}:
\begin{proof}
 As noted in \cite{DLS10,Szekelyhidi1}, a matrix-valued quadratic homogeneous polynomial $P:\R^4\to\R^{4\times 4}$ gives rise to a differential operator $P(\partial)$ as required in the lemma, if $P=P(\eta)$ satisfies 
 $$
 P\eta=0,\,P^T=P,\,Pe_4=0,\,P(\xi)=U.
 $$
 Elementary examples satisfying the first 3 conditions above are given by $P(\eta)=\tfrac{1}{2}(R\eta\otimes Q\eta+Q\eta\otimes R\eta)$ for antisymmetric $4\times 4$ matrices $R,Q$ such that $Re_4=0$. In particular, for any $a,b\perp\xi$ with $a\perp e_4$, set $R=a\otimes \xi-\xi\otimes a$ and $Q=b\otimes \xi-\xi\otimes b$, to obtain $P_{a,b}(\eta)$. One quickly verifies that $P_{a,b}(\xi)=\tfrac{1}{2}(a\otimes b+b\otimes a)$. Since any $U\in \R^{4\times 4}_{sym,0}$ with $U\xi=0$ can be written as a linear combination 
 $$
 U=\sum_i\tfrac{1}{2}(a_i\otimes b_i+b_i\otimes a_i)
 $$
 for vectors $a_i,b_i\in\R^4$ with $a_i\cdot\xi=b_i\cdot \xi=0$ and $a_i\cdot e_4=0$, we obtain $P_U$ as required in the lemma as
 $$
 P_U(\eta)=\sum_iP_{a_i,b_i}(\eta).
 $$
 \end{proof}

\subsection{Wave cone conditions on $u$, $B$ and $E$}
It will turn out that when we choose which $\Lambda$-directions to use, we have much more freedom in the choice of $S$ than the three other variables $u$, $B$ and $E$. Recall that in 3D, the wave cone conditions are
\begin{align}
& \xi_x \cdot u = \xi_x \cdot B = 0, \label{Wave cone condition 1b} \\
& \xi_t u + S \xi_x = 0, \label{Wave cone condition 2b} \\
& \xi_t B + \xi_x \times E = 0. \label{Wave cone condition 3b}
\end{align}
We can typically first find $u,B,E,\xi$ satisfying \eqref{Wave cone condition 1b} and \eqref{Wave cone condition 3b} and afterwards choose $S$ satisfying \eqref{Wave cone condition 2b}. This motivates the following observation.

\begin{lem} \label{Wave cone in u, b and a}
Let $u,B,E \in \R^3$. The following conditions are equivalent.
\begin{enumerate}[\upshape (i)]
\item \eqref{Wave cone condition 1b} and \eqref{Wave cone condition 3b} have a solution $\xi \in (\R^3 \setminus \{0\}) \times \R$.
\item $B \cdot E = 0$.
\end{enumerate}
\end{lem}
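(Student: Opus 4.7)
The plan is to establish the two directions separately. For (i) $\Rightarrow$ (ii), I would simply take the inner product of \eqref{Wave cone condition 3b} with $E$. Using the triple-product identity $E \cdot (\xi_x \times E) = 0$, this yields
\[
\xi_t \, (B \cdot E) = 0.
\]
If $\xi_t \neq 0$, I conclude immediately. If instead $\xi_t = 0$, then \eqref{Wave cone condition 3b} collapses to $\xi_x \times E = 0$, forcing $E \in \operatorname{span}(\xi_x)$; together with $\xi_x \cdot B = 0$ from \eqref{Wave cone condition 1b}, this still delivers $B \cdot E = 0$.

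For the converse (ii) $\Rightarrow$ (i), I assume $B \cdot E = 0$ and construct the desired $\xi$. In the main case $B \neq 0$, my plan is to choose a nonzero $\xi_x$ in the intersection $B^\perp \cap u^\perp$, which is nontrivial by dimension counting in $\R^3$; a canonical choice when $u$ and $B$ are linearly independent is $\xi_x = B \times u$. With this choice, \eqref{Wave cone condition 1b} holds automatically. For \eqref{Wave cone condition 3b} I need to produce $\xi_t \in \R$ with $\xi_t B = -\xi_x \times E$. The crucial observation is that $B$ is orthogonal to both $\xi_x$ (by construction) and $E$ (by hypothesis), so whenever $\xi_x$ and $E$ are linearly independent, $B$ must lie in the one-dimensional line $\operatorname{span}(\xi_x, E)^{\perp} = \operatorname{span}(\xi_x \times E)$. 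Hence $\xi_x \times E$ is a scalar multiple of $B$, and $\xi_t$ is uniquely determined. In the degenerate subcase $\xi_x \parallel E$ one has $\xi_x \times E = 0$, so the choice $\xi_t = 0$ works. The configurations with $B = 0$ can be disposed of by direct inspection of \eqref{Wave cone condition 3b}, which then reduces to $\xi_x \times E = 0$ and is handled by taking $\xi_x$ suitably in $E^{\perp} \cap u^\perp$ (or arbitrary in $u^\perp$ when also $E = 0$).

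The main obstacle I anticipate is not an estimate but the bookkeeping of the several degenerate linear-algebra subcases (such as $u \parallel B$ with $B \neq 0$, or $\xi_x \parallel E$, or $B = 0$). The algebraic heart of the argument is nevertheless simple and geometric: the pair of orthogonality relations $\xi_x \perp B$ and $E \perp B$ pins $B$ onto the normal line to the 2-plane $\operatorname{span}(\xi_x, E)$, which is precisely the direction of $\xi_x \times E$, and it is this alignment that both forces $B \cdot E = 0$ in the forward direction and enables the construction of $\xi_t$ in the backward direction.
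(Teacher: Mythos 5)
Your argument for (i) $\Rightarrow$ (ii) and for (ii) $\Rightarrow$ (i) in the main case $B \neq 0$ is correct and follows essentially the same route as the paper; in fact your geometric observation that $\xi_x \perp B$ and $E \perp B$ together force $\xi_x \times E \in \operatorname{span}(B)$ is a clean, uniform way to produce $\xi_t$, and it avoids the paper's explicit decomposition of $E$ in the orthogonal basis $\{B,\, B \times u,\, B \times (B \times u)\}$ used when $B \times u \neq 0$.

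The handling of $B = 0$, $E \neq 0$ is wrong, though, and this is not a harmless bookkeeping slip. You correctly note that \eqref{Wave cone condition 3b} collapses to $\xi_x \times E = 0$, but for $\xi_x \neq 0$ this forces $\xi_x$ to be \emph{parallel} to $E$; instead you propose $\xi_x \in E^\perp \cap u^\perp$, for which $|\xi_x \times E| = |\xi_x|\,|E| \neq 0$. You seem to have swapped the vanishing of the cross product $\xi_x \times E$ for the vanishing of the dot product $\xi_x \cdot E$. Moreover, this error is concealing a genuine degeneracy: once corrected to $\xi_x \parallel E$, the constraint $\xi_x \cdot u = 0$ from \eqref{Wave cone condition 1b} forces $E \cdot u = 0$, which is \emph{not} a consequence of $B \cdot E = 0$ when $B = 0$. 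Concretely, with $u = E = e_1$ and $B = 0$, condition (ii) holds vacuously while (i) has no solution $\xi_x \neq 0$, so the implication (ii) $\Rightarrow$ (i) breaks down here unless one also assumes $B \neq 0$ or $E \cdot u = 0$. The paper's own proof is thin at exactly this point: it splits along $B \times u = 0$ and writes ``$\xi_x = a$'' (presumably $E$), a choice that works when $B \neq 0$ and $u \parallel B$ --- since then $E \cdot u$ is proportional to $E \cdot B = 0$ --- but which does not settle $B = 0$ with $E \cdot u \neq 0$ either. You should flag this sub-case explicitly rather than absorb it into the clause ``taking $\xi_x$ suitably in $E^\perp \cap u^\perp$''.
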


\begin{proof}
We first show that (i) $\Rightarrow$ (ii). Choose a solution $\xi \in (\R^3 \setminus \{0\}) \times \R$ of \eqref{Wave cone condition 1b} and \eqref{Wave cone condition 3b}. If $\xi_t \neq 0$, then \eqref{Wave cone condition 3b} gives $B \cdot E = - (\xi_x \times E \cdot E)/\xi_t = 0$. If $\xi_t = 0$, then \eqref{Wave cone condition 3b} gives $E = k \xi_x$ for some $k \in \R$, so that \eqref{Wave cone condition 1b} gives $B \cdot E = 0$.

We then show that (ii) $\Rightarrow$ (i). If $B \times u \neq 0$, we choose $\xi_x = B \times u$. Since $B \cdot E = 0$, we may write $E = c_1 \, B \times u + c_2 B \times (B \times u)$ for some $c_1,c_2 \in \R$. (The set $\{B,B \times u, B \times (B \times u)\}$ is an orthogonal basis of $\R^3$.) Thus $\xi_x \times E = c_2 \abs{B \times u}^2 B$ and we may choose $\xi_t = - c_2 \abs{B \times u}^2$.  If, on the other hand, $B \times u = 0$, we may set $\xi_t = 0$ and choose $\xi_x = a$ if $E \neq 0$ and any $\xi_x \in \{B\}^\perp \setminus \{0\}$ if $E = 0$.
\end{proof}

\subsection{Maxwell two-forms} \label{Maxwell two-forms}
Our aim in the rest of this chapter is to find potentials for the variables $B$ and $E$. We carry out this task using the formalisms of two-forms and bivectors in $\mathbb{R}^4$. 
In electromagnetics, it is customary to express $(B,E) \in \mathbb{R}^3 \times \mathbb{R}^3$  as a unique bivector $\omega \in \Lambda^2(\mathbb{R}^4)$ via the identification
\begin{equation} \label{Definition of F}
\begin{aligned}
\omega & \defeq B_1 dx^2 \wedge dx^3 + B_2 dx^3 \wedge dx^1 + B_3 dx^1 \wedge dx^2 \\
&+ E_1 dx^1 \wedge dx^4 + E_2 dx^2 \wedge dx^4 + E_3 dx^3 \wedge dx^4
\end{aligned}
\end{equation}
(see~\cite{Des}). We write $\omega \cong (B,E)$. Then, Gauss' law and Maxwell-Faraday law are written concisely via differential forms:
\begin{equation} \label{Simplified form of relaxed MHD on b and a}
\nabla \cdot B = 0 \quad \text{and} \quad \partial_t B + \nabla \times E = 0 \qquad \Longleftrightarrow \qquad d \omega = 0,
\end{equation}
i.e., $\omega$ is an exact two-form called {\it Maxwell two-form} or {\it electromagnetic two-form}.

Recall that in addition to \eqref{Simplified form of relaxed MHD on b and a}, we also need $E$ and $B$ to satisfy $B \cdot E = 0$. We express the latter condition in the language of bivectors: 
\begin{align*}
B \cdot E = 0 \qquad
&\Longleftrightarrow \qquad \omega \wedge \omega = 2 B \cdot E \, dx^1 \wedge dx^2 \wedge dx^3 \wedge dx^4 = 0 \\
&\Longleftrightarrow \qquad \omega = v \wedge w \quad \text{for some } v,w \in \R^4
\end{align*}
(where the last equality showing that $\omega$ is simple will be proved in the forthcoming Proposition \ref{Proposition on characterisations of simple forms}). Here and in the sequel, we identify a vector $v \in \R^4$ and a 1-form $\sum_{i=1}^4 v_i dx^i$.

Our nonlinear constraint simplifies to
\begin{equation}\label{Mbivectors}
\mathscr{M}=\{(u,S,\omega):  \omega \wedge \omega=0\} 
\end{equation}
and the  wave cone conditions for $(B,E)$, \eqref{Wave cone condition 1b} and \eqref{Wave cone condition 3b}, are reduced to 
\begin{equation}\label{waveconebivectors}
\omega \wedge \xi=0.
\end{equation}
If such a $\xi$ is found, in view of Lemma \ref{Wave cone in u, b and a} it can be modified to verify  $\xi_x \cdot u=0$ as well. Thus  it only remains to verify the condition involving $S$, i.e., \eqref{Wave cone condition 2b}.

It turns out that the interaction of \eqref{Mbivectors} and \eqref{waveconebivectors} is very neat with the forms formalism. This is the content of the next section.

\subsection{$\Lambda$-segments in terms of simple bivectors} \label{Lambda-segments in terms of simple two-forms}
The following well-known proposition collects characterisations equivalent to the condition $B \cdot E = 0$ (The Pl\"{u}cker identity for the bivector $\omega$).

\begin{prop} \label{Proposition on characterisations of simple forms}
Let $\omega \cong (B,E) \in \R^3 \times \R^3$. The following conditions are equivalent:
\begin{enumerate}[\upshape (i)]
\item $\omega$ is \emph{degenerate}, that is, $\omega \wedge \omega = 0$.

\item $\omega$ is \emph{simple}, that is, $\omega = v \wedge w$ for some $v,w \in \R^4$, called the \emph{factors of} $\omega$.

\item $B \cdot E = 0$ 
\item $\omega \wedge \xi=0$ for some $\xi \in (\R^3 \setminus \{0\}) \times \R$.
\end{enumerate}
\end{prop}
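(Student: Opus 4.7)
The plan is to close the cycle (i)$\Leftrightarrow$(iii) and (i)$\Rightarrow$(ii)$\Rightarrow$(iv)$\Rightarrow$(i). The equivalence (i)$\Leftrightarrow$(iii) is already recorded in the paragraph preceding the statement: a direct expansion from the definition \eqref{Definition of F} yields
\[
\omega\wedge\omega \;=\; 2\,(B\cdot E)\,dx^1\wedge dx^2\wedge dx^3\wedge dx^4,
\]
so the two conditions coincide. The implication (ii)$\Rightarrow$(i) is even shorter: $(v\wedge w)\wedge(v\wedge w)=0$ by antisymmetry.

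The main content is (i)$\Rightarrow$(ii), i.e.\ the Pl\"ucker identity in $\R^4$. My preferred route is the canonical-form theorem for alternating $2$-forms: for any $\omega\in\Lambda^2(\R^4)$ there exists a basis $\{f_1,f_2,f_3,f_4\}$ of $(\R^4)^*$ and scalars $a,b$ such that $\omega=a\,f_1\wedge f_2+b\,f_3\wedge f_4$. A direct computation gives $\omega\wedge\omega=2ab\,f_1\wedge f_2\wedge f_3\wedge f_4$, so (i) forces $a=0$ or $b=0$ and $\omega$ is simple. Alternatively, one can argue by explicit cases: if $B=0$ then $\omega=\bigl(\sum_iE_i\,dx^i\bigr)\wedge dx^4$; if $E=0$ and $B\neq 0$ one writes $\omega=v\wedge w$ with $v,w$ a scaled orthonormal basis of $B^\perp\subset\R^3$; and in the generic case $B,E\neq 0$, using $B\cdot E=0$, one verifies that with $c=(B\times E)/|E|^2\in\R^3$ the vectors $v=c+e_4$ and $w=-E$ satisfy $\omega=v\wedge w$ (this is the ideal-Ohm decomposition $E=B\times(-c)$ in disguise).

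For (ii)$\Rightarrow$(iv), if $\omega=v\wedge w\neq 0$ then $v,w$ are linearly independent, so at least one of them has nonzero spatial part (else both lie in $\spann\{e_4\}$, which is one-dimensional); that vector is the desired $\xi$, because $v\wedge w\wedge v=v\wedge w\wedge w=0$. The case $\omega=0$ is trivial. Finally, for (iv)$\Rightarrow$(i) I invoke the standard exactness of the Koszul sequence on $\R^4$: for any nonzero $\xi\in(\R^4)^*$ one has $\ker\bigl(\wedge\xi:\Lambda^2(\R^4)\to\Lambda^3(\R^4)\bigr)=\xi\wedge\Lambda^1(\R^4)$. Hence $\omega=\xi\wedge\eta$ for some $\eta$, so (ii) holds and therefore (i) follows. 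The only real obstacle in the whole argument is the canonical-form step in (i)$\Rightarrow$(ii); the remaining implications reduce to one-line manipulations in the exterior algebra.
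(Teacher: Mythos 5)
Your proof is correct, but it routes the cycle differently from the paper, and the two arguments have a different flavour that is worth noting. For (i)$\Rightarrow$(ii), the paper goes through (iii)$\Rightarrow$(ii) by an explicit case split ($E=0$: pick $v_x,w_x$ with $v_x\times w_x=B$; $E\neq 0$: $(E,0)\wedge(B\times E/|E|^2,1)\cong(B,E)$, which is precisely your generic-case factorisation up to reordering), whereas your primary route is the abstract canonical-form theorem for alternating two-forms; both work, and the canonical-form argument scales immediately to the Pl\"{u}cker relation in $\Lambda^2$ of any dimension. The bigger divergence is in (iv): the paper proves (iii)$\Leftrightarrow$(iv) by invoking Lemma \ref{Wave cone in u, b and a} (the MHD wave-cone lemma) and its proof, and alternatively (iv)$\Rightarrow$(ii) via Proposition \ref{Equivalent conditions for wave cone condition}, so its argument leans on machinery built for the wave-cone analysis; you instead handle (ii)$\Rightarrow$(iv) by a short direct argument (if $v\wedge w\neq 0$ then one of $v,w$ has nonzero spatial part and serves as $\xi$) and (iv)$\Rightarrow$(i) by exactness of the Koszul-type complex $\wedge\xi\colon\Lambda^{k}\to\Lambda^{k+1}$. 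Your version is self-contained in the exterior algebra and has no forward references, which is a genuine structural advantage; the paper's version is more elementary in the tools used but tied to the surrounding wave-cone development. One small inaccuracy in an aside: you describe $c=(B\times E)/|E|^2$ as giving the ideal-Ohm decomposition $E=B\times(-c)$, but $B\times(-c)=|B|^2 E/|E|^2$, which equals $E$ only when $|B|=|E|$; the Ohm-law choice is $u=(E\times B)/|B|^2$. This does not affect your factorisation of $\omega$, which is correct as computed, but the parenthetical remark should be dropped or corrected.
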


\begin{proof}
The equivalence of (i) and (iii) was already noted, and (ii) clearly implies (i). Suppose then (iii) holds; our aim is to prove (ii). If $E = 0$, choose any $v_x,w_x \in \R^3$ such that $v_x \times w_x = b$. Then $(v_x,0) \wedge (w_x,0) \cong (v_x \times w_x, 0) = (B,0)$. If $E \neq 0$, then $(E,0) \wedge (B \times E/\abs{E}^2,1) \cong (B,E)$, giving (ii).

The implication (iii) $\Rightarrow$ (iv) follows from Lemma \ref{Wave cone in u, b and a}, and the proof of Lemma \ref{Wave cone in u, b and a} also gives (iv) $\Rightarrow$ (iii). Alternatively, (iv) $\Rightarrow$ (ii) follows from Proposition \ref{Equivalent conditions for wave cone condition} below.
\end{proof}

Using Proposition \ref{Proposition on characterisations of simple forms}, we formulate some useful further characterisations of \eqref{waveconebivectors}.

\begin{prop} \label{Equivalent conditions for wave cone condition}
Suppose $\omega = v \wedge w \neq 0$ and $\xi \in (\R^3 \setminus \{0\}) \times \R$. The following conditions are equivalent:
\begin{enumerate}[\upshape (i)]
\item $\omega \wedge \xi = 0$.

\item $\xi \in \spann\{v,w\}$.

\item $\omega = \tilde{v} \wedge \xi$ for some $\tilde{v} \in \spann\{v,w\} \setminus \{0\}$.
\end{enumerate}
\end{prop}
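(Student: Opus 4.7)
My plan is to prove the cyclic chain (iii) $\Rightarrow$ (i) $\Rightarrow$ (ii) $\Rightarrow$ (iii), using only basic exterior algebra in $\R^4$. The key standing fact is that since $\omega = v\wedge w \neq 0$, the vectors $v,w$ are linearly independent, so $\{v,w\}$ can be completed to a basis of $\R^4$.

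The implication (iii) $\Rightarrow$ (i) is immediate from $\xi \wedge \xi = 0$: if $\omega = \tilde v \wedge \xi$, then $\omega \wedge \xi = \tilde v \wedge \xi \wedge \xi = 0$. For (ii) $\Rightarrow$ (iii), suppose $\xi = \alpha v + \beta w \neq 0$. If $\beta \neq 0$, solve $w = \beta^{-1}(\xi - \alpha v)$, whence $\omega = v \wedge w = \beta^{-1} v \wedge \xi$, so $\tilde v = \beta^{-1} v \in \spann\{v,w\}\setminus\{0\}$ works; if $\beta = 0$ then $\alpha \neq 0$ and $\omega = \alpha^{-1}\xi \wedge w = -\alpha^{-1} w \wedge \xi$, so $\tilde v = -\alpha^{-1} w$ works. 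In either case $\tilde v \neq 0$ because $v,w \neq 0$.

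The main step is (i) $\Rightarrow$ (ii), which I would do by a basis argument. Extend $\{v,w\}$ to a basis $\{v,w,f_3,f_4\}$ of $\R^4$ and decompose
\[
\xi = \alpha v + \beta w + \gamma f_3 + \delta f_4.
\]
Since $v \wedge w \wedge v = v \wedge w \wedge w = 0$, we compute
\[
0 = \omega \wedge \xi = \gamma\, v \wedge w \wedge f_3 + \delta\, v \wedge w \wedge f_4.
\]
Because $\{v,w,f_3,f_4\}$ is a basis, the 3-forms $v \wedge w \wedge f_3$ and $v \wedge w \wedge f_4$ are linearly independent in $\Lambda^3(\R^4)$ (they are nonzero scalar multiples of two distinct basis elements of the four-dimensional space $\Lambda^3(\R^4)$ relative to this basis). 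Hence $\gamma = \delta = 0$ and $\xi \in \spann\{v,w\}$.

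There is no real obstacle here; the only mild subtlety is justifying the linear independence of $v\wedge w\wedge f_3$ and $v\wedge w \wedge f_4$, which is standard. The hypothesis $\xi \in (\R^3\setminus\{0\})\times\R$ is not needed for the abstract statement; only $\xi \neq 0$ matters, and that is used in (ii) $\Rightarrow$ (iii) to guarantee $(\alpha,\beta) \neq (0,0)$.
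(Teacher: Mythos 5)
Your proof is correct and follows the same cyclic chain (i) $\Rightarrow$ (ii) $\Rightarrow$ (iii) $\Rightarrow$ (i) as the paper, using elementary exterior algebra throughout. The only cosmetic differences are in execution: for (i) $\Rightarrow$ (ii) you expand $\xi$ in a basis extending $\{v,w\}$ rather than invoking the linear dependence of $v,w,\xi$ directly, and for (ii) $\Rightarrow$ (iii) you solve for $w$ (or $v$) in terms of $\xi$ rather than picking $\tilde v \in \spann\{v,w\}$ orthogonal to $\xi$, but these are equivalent bookkeeping choices; your side remark that only $\xi \neq 0$ (not $\xi_x \neq 0$) is used here is also accurate.
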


\begin{proof}
For (i) $\Rightarrow$ (ii) suppose $v \wedge w \wedge \xi = 0$. We may thus write $c_1 v + c_2 w + c_3 \xi = 0$, where $\{c_1,c_2,c_3\} \neq \{0\}$. If $c_3 = 0$, we get a contradiction with $v \wedge w \neq 0$, and therefore $\xi \in \spann\{v,w\}$. For (ii) $\Rightarrow$ (iii) choose $\tilde{v} \in \spann\{v,w\} \setminus \{0\}$ with $\tilde{v} \cdot \xi = 0$. After normalising $\tilde{v}$ we get $\tilde{v} \wedge \xi = v \wedge w$. The direction (iii) $\Rightarrow$ (i) is clear.
\end{proof}

Recall that every $\Lambda$-segment is contained in $\mathscr{M}$. We give equivalent characterisations for this condition.

\begin{prop} \label{Characterisation of Lambda segments}
Suppose that $\omega_0$ and $\omega \neq 0$ are simple bivectors and that $\omega \wedge \xi = 0$, where $\xi \in (\R^3 \setminus \{0\}) \times \R$. The following conditions are equivalent:
\begin{enumerate}[\upshape (i)]
\item $\omega_0 + t \omega$ is simple for all $t \in \R$.

\item $\omega_0 \wedge \omega = 0$.

\item We can write $\omega = v \wedge \xi$ and either $\omega_0 = v_0 \wedge \xi$ or $\omega_0 = v \wedge w_0$.
\end{enumerate}
\end{prop}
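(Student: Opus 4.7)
The plan is to establish the equivalences by first handling (i)$\Leftrightarrow$(ii) via a short direct computation with the wedge product, then proving (iii)$\Rightarrow$(ii) as an immediate consequence of $\xi \wedge \xi = 0$ and $v \wedge v = 0$, and finally deriving (ii)$\Rightarrow$(iii) by a dimension-counting argument on the 2-planes associated to the two simple bivectors.

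For (i)$\Leftrightarrow$(ii) I would expand
\[(\omega_0 + t\omega) \wedge (\omega_0 + t\omega) = \omega_0 \wedge \omega_0 + 2t\, \omega_0 \wedge \omega + t^2\, \omega \wedge \omega,\]
using that $\omega_0 \wedge \omega = \omega \wedge \omega_0$ for 2-forms. Since $\omega$ and $\omega_0$ are simple, the first and last terms vanish by Proposition \ref{Proposition on characterisations of simple forms}, so $\omega_0 + t\omega$ is simple for every $t \in \R$ exactly when $\omega_0 \wedge \omega = 0$. The implication (iii)$\Rightarrow$(ii) is then immediate: when $\omega_0 = v_0 \wedge \xi$ the factor $\xi$ appears twice in $\omega_0 \wedge \omega$, and when $\omega_0 = v \wedge w_0$ the factor $v$ appears twice, forcing the 4-form to vanish in either case.

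For the main step (ii)$\Rightarrow$(iii), by Proposition \ref{Equivalent conditions for wave cone condition} I first write $\omega = v \wedge \xi$ with $v,\xi$ linearly independent in $\R^4$. If $\omega_0 = 0$ the conclusion is trivial, so assume $\omega_0 = a \wedge b$ with $a,b$ linearly independent, and set $P \defeq \spann\{a,b\}$, $Q \defeq \spann\{v,\xi\}$. The assumption $\omega_0 \wedge \omega = a \wedge b \wedge v \wedge \xi = 0$ in $\Lambda^4(\R^4)$ is equivalent to linear dependence of $\{a,b,v,\xi\}$, i.e.\ $\dim(P+Q) \le 3$, and hence $P \cap Q \neq \{0\}$. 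Pick $0 \neq u \in P \cap Q$; since $u \in P$ and $\omega_0 \neq 0$, we can complete $u$ to a basis of $P$ and write $\omega_0 = u \wedge b'$ (after rescaling $b'$). Now I split into cases according to whether $u$ is parallel to $\xi$ or not: in the parallel case, $\omega_0 = c\xi \wedge b' = -c b' \wedge \xi$ which has the form $v_0 \wedge \xi$; in the non-parallel case, write $u = \alpha v + \beta \xi$ with $\alpha \neq 0$, so that $\omega = (u/\alpha) \wedge \xi$ (using $\xi \wedge \xi = 0$), and then $\omega_0 = u \wedge b' = (u/\alpha) \wedge (\alpha b')$. Setting $\tilde v \defeq u/\alpha$ and $w_0 \defeq \alpha b'$ gives $\omega = \tilde v \wedge \xi$ and $\omega_0 = \tilde v \wedge w_0$ with the same first factor, as required.

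I expect the only subtle point to be the non-parallel sub-case of (ii)$\Rightarrow$(iii), where one has to exploit the gauge freedom $v \mapsto v + t\xi$ in the representation $\omega = v \wedge \xi$ to match the chosen $u \in P \cap Q$ to the first factor of $\omega$; once this freedom is recognised, everything else is a routine application of Proposition \ref{Proposition on characterisations of simple forms} and linear algebra in $\R^4$.
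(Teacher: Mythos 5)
Your proof is correct, and the route it takes differs from the paper's in the key step (ii)$\Rightarrow$(iii). The paper's argument writes $\omega=\tilde v\wedge\xi$, $\omega_0=\tilde v_0\wedge\tilde w_0$, reads off from $\omega_0\wedge\omega=0$ a linear relation that is used to express $\tilde v_0$ as a combination $d_1\tilde w_0+d_2\tilde v+d_3\xi$, and then splits into the cases $d_3=0$, $d_3\neq0\wedge d_2\neq0$, $d_3\neq0\wedge d_2=0$ to produce the required factorisations. Your version instead works with the 2-planes $P=\operatorname{span}\{a,b\}$ and $Q=\operatorname{span}\{v,\xi\}$: $\omega_0\wedge\omega=0$ forces $\dim(P+Q)\le 3$, hence $P\cap Q\neq\{0\}$, and a nonzero $u\in P\cap Q$ can be made a common first factor of both bivectors (adjusting $v$ by the $\xi$-gauge when $u$ is not parallel to $\xi$). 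This is a bit more conceptual, and it also sidesteps a small awkwardness in the paper's version, namely the implicit assumption that the linear relation can be solved for $\tilde v_0$ rather than $\tilde w_0$; your intersection argument makes no such choice and so is symmetric from the start. The (i)$\Leftrightarrow$(ii) expansion and the (iii)$\Rightarrow$(ii) step are the same in both proofs. One minor presentational remark: in the non-parallel case you should say explicitly that $u\in Q$ implies $u=\alpha v+\beta\xi$ and that $\alpha\neq 0$ precisely because $u\not\parallel\xi$; you do say this, so the argument is complete.
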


\begin{proof}
The equivalence (i) $\Leftrightarrow$ (ii) is clear since $\omega_0 \wedge \omega = \omega \wedge \omega_0$. The direction (iii) $\Rightarrow$ (ii) is also clear, and we complete the proof by showing that (ii) $\Rightarrow$ (iii). The case $\omega_0 = 0$ being clear, we assume that $\omega_0 \neq 0$.

Use Proposition \ref{Equivalent conditions for wave cone condition} to write $\omega = \tilde{v} \wedge \xi$ for some $\tilde{v} \in \R^4 \setminus \{0\}$. Also write $\omega_0 = \tilde{v}_0 \wedge \tilde{w}_0$. Since $\omega_0 \wedge \omega = 0$ and $\omega_0 \neq 0$ by assumption, we conclude that $\tilde{v}_0 = d_1 \tilde{w}_0 + d_2 \tilde{v} + d_3 \xi$ for some $d_1,d_2,d_3 \in \R$.

If $d_3 = 0$, we set $v = \tilde{v}$ and $w_0 = d_2 \tilde{w}_0$. Next, if $d_3 \neq 0$ and $d_2 \neq 0$, we choose $v = \tilde{v} + (d_3/d_2) \xi$ and $w_0 = d_2 \tilde{w}_0$. Finally, if $d_3 \neq 0$ and $d_2 = 0$, we select $v = \tilde{v}$ and $v_0 = - d_3 \tilde{w}_0$.
\end{proof}

\subsection{Clebsch variables}

Now \eqref{Simplified form of relaxed MHD on b and a} means that $\omega$ is closed and thus, by Poincar\'{e} lemma, exact: $\omega = d \alpha$. Here the so-called \emph{electromagnetic four-potential} $\alpha$ is of course not unique. We specify a choice of $\alpha$ below. Recall from \eqref{Mbivectors} that our potential $\alpha$ is required to satisfy
\[
d\alpha \wedge d\alpha=0.
\]
This fact, among other things, motivates us to set $\alpha = \varphi \, d \psi$ which leads to $\omega = d \alpha = d \varphi \wedge d\psi$; here $\phi,\psi \in C^\infty(\R^4)$ are traditionally called \emph{Clebsch variables} or \emph{Euler potentials}.

\begin{defin}
We define $P_B, P_E \colon C^\infty(\R^4) \times C^\infty(\R^4) \to C^\infty(\R^4;\R^3)$ via
\begin{equation} \label{Potentials for b and a}
d\varphi \wedge d\psi \cong (\nabla \varphi \times \nabla \psi, \partial_t \psi \nabla \varphi - \partial_t \varphi \nabla \psi) \eqdef (P_B[\varphi,\psi],P_E[\varphi,\psi]).
\end{equation}
\end{defin}

With the Clebsch variables at our disposal we make a natural Ansatz on the electromagnetic side of the localised plane waves. 
Fix $V_0=(u_0,S_0,\omega_0) \in \mathscr{M}$, $V=(u,S,\omega)\in \Lambda$ with $\xi \in (\R^3 \setminus \{0\}) \times \R$ being a solution to \eqref{Wave cone condition 1}--\eqref{Wave cone condition 3} and $\omega_0 \wedge \omega=0$.

 Use the simplicity of $\omega_0$ to  write $\omega_0 = v_0 \wedge w_0$, and recall the operator $P_{U}$ given by Lemma \ref{l:fluidpotential}, with $U$ given in \eqref{e:U}. 

Fix a cube $Q \subset \R^4$ and a cutoff function $\chi \in C_c^\infty(Q)$. Given $h \in C^\infty(\R)$ and $\ell \in \N$, our aim is to find $\phi_\ell$, $\varphi_\ell$ and $\psi_\ell$ such that
\begin{equation} \label{Aim on potentials}
V_\ell \defeq ((u_0,S_0) + P_{U}(\phi_\ell), d \varphi_\ell \wedge d \psi_\ell) = V_0 + \chi(x,t) h^{\prime \prime}(\ell (x,t) \cdot \xi) V + O \left( \frac{1}{\ell} \right)
\end{equation}
and $V_\ell \rightharpoonup V_0$ in $L^2(Q;\R^{15})$. The choice of $\phi_\ell$ is specified in Lemma \ref{Potentials for the symmetric side}. For the electromagnetic part we define Clebsch variables of the form
\begin{align}
& \varphi_\ell(x,t) \defeq v_0 \cdot (x,t) + \frac{c_1 \chi(x,t) h'(\ell(x,t) \cdot \xi)}{\ell}, \label{Potential Ansatz 1} \\
& \psi_\ell(x,t) \defeq w_0 \cdot (x,t) + \frac{c_2 \chi(x,t) h'((x,t) \cdot \xi)}{\ell}. \label{Potential Ansatz 2}
\end{align}
In \eqref{Potential Ansatz 1}--\eqref{Potential Ansatz 2}, we use $h'$ instead of $h$ in order to be consistent with the scaling on the fluid part.

The Ansatz \eqref{Potential Ansatz 1}--\eqref{Potential Ansatz 2} yields
\begin{equation} \label{Cancellation formula}
d \varphi_\ell(x,t) \wedge d \psi_\ell(x,t)
= v_0 \wedge w_0 + \chi(x,t) h^{\prime \prime}(\ell (x,t) \cdot \xi) (c_2 v_0 - c_1 w_0) \wedge \xi + O \left( \frac{1}{\ell} \right),
\end{equation}
which is of the form \eqref{Aim on potentials} if 
\begin{equation} \label{Potential condition}
(c_2 v_0 - c_1 w_0) \wedge \xi = \omega.
\end{equation}
This raises the question whether \eqref{Potential condition} can be solved for $c_1,c_2 \in \R$. Notice that if $\omega_0 \neq 0$, the answer is independent of the factors $v_0,w_0$ of $\omega_0 = v_0 \wedge w_0$.

It turns out that given general $\omega_0,\omega$ with $\omega_0 \wedge \omega = 0$, such $c_1,c_2$ do not always exist. (The canonical bad case is $\omega_0=v \wedge \xi, \omega=w
\wedge \xi$, as then  $(c_1 v+c_2 \xi) \wedge \xi= c_1 v \wedge \xi= w \wedge \xi$ if and only if $v$ is parallel to $w$).
Essentially, when \eqref{Potential condition} holds, the segment defined by $V_0$ and $V$ is good (the case $\omega_0=0$ yielding some additional cases). 


\begin{rem}In \eqref{Cancellation formula}, we use crucially the cancellation properties of the wedge product $d\varphi_\ell \wedge d\psi_\ell$ to overcome the nonlinearity of $P_B$ and $P_E$.
In fact, $d\varphi_\ell \wedge d\psi_\ell$ arises, up to a term $O(1/\ell)$, as pullbacks of the bivector $v_0 \wedge w_0$. In
other words, 
 \[(d\varphi_\ell,d\psi_\ell) = \Phi_\ell^*(d\varphi,d\psi),\]
where $\varphi(x,t) = (x,t) \cdot v_0$, $\psi(x,t) = (x,t) \cdot w_0$ and $\Phi_\ell(x,t) = x + \ell^{-1} h'(\ell x \cdot \xi) \zeta$ with $\zeta \cdot v_0 = c_1$ and $\zeta \cdot w_0 = c_2$. Note that the class of simple two-forms is closed under taking pull-backs with $\Phi \in C^\infty(\R^4;\R^4)$, as a consequence of the formula $\Phi^* (v \wedge w) = \Phi^* v \wedge \Phi^* w$.
\end{rem}

\subsection{States in Clebsch variables}
As a matter of fact, when we iterate the construction and apply convex integration we will be modifying $d\varphi$ and $d\psi$ instead of $d\varphi \wedge d\psi$. We will therefore use a separate notation in which we keep track of the factors forming a bivector:
\begin{equation}\label{Wnotation}
W = (u, S, v, w) \in \R^4 \times \R_{\operatorname{sym}}^{3 \times 3} \times \R^4 \times \R^4, \qquad V = p(W) \defeq (u, S, v \wedge w) \in \mathscr{M}.\end{equation}
The case $\omega_0 = 0$ is special as we will be able to construct potentials only when we interpret $0=0 \wedge 0$ .

\subsection{Good and bad $\Lambda$-segments} \label{The answer to Question on existence of potentials}
To start, we consider  simple two-forms $\omega_0 = v_0 \wedge w_0 \neq 0$ and $\omega = v \wedge w \neq 0$ with $\omega_0 \wedge \omega = 0$. Since $\omega$ is simple, there exists $\xi \in (\R^3 \setminus \{0\}) \times \R$ such that $\omega \wedge \xi = 0$. We study separately the case where $\omega$ and $\omega_0$ are parallel and the one in which they are not.

\begin{prop} \label{Potentials for easy Lambda segments}
If $\omega = k \omega_0 \neq 0$ for some $k \in \R$, then \eqref{Potential condition} is satisfied for some $c_1,c_2 \in \R$.
\end{prop}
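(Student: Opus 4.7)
The plan is to exploit the parallelism $\omega = k\omega_0$ to reduce the problem to finding $\xi$ in the span of $v_0, w_0$, and then solve a single linear equation.

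First I would observe that since $\omega = k\omega_0$ with $k \ne 0$ and $\omega \wedge \xi = 0$ (which holds by the hypothesis that $\omega$ is simple with factor direction $\xi$, cf.\ Proposition \ref{Equivalent conditions for wave cone condition}), we have $\omega_0 \wedge \xi = 0$ as well. Applying Proposition \ref{Equivalent conditions for wave cone condition} to $\omega_0 = v_0 \wedge w_0$, this gives $\xi \in \spann\{v_0, w_0\}$, so we can write
\[
\xi = a v_0 + b w_0
\]
for some $a, b \in \R$ with $(a,b) \ne (0,0)$ (since $\xi \ne 0$).

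Next I would compute directly, using bilinearity and antisymmetry of the wedge product together with $v_0 \wedge v_0 = w_0 \wedge w_0 = 0$:
\[
(c_2 v_0 - c_1 w_0) \wedge \xi
= (c_2 v_0 - c_1 w_0) \wedge (a v_0 + b w_0)
= (c_1 a + c_2 b)\, v_0 \wedge w_0.
\]
Since $\omega = k\, v_0 \wedge w_0$, the condition \eqref{Potential condition} reduces to the single scalar equation
\[
c_1 a + c_2 b = k,
\]
which is solvable in $(c_1, c_2)$ precisely because $(a,b) \ne 0$.

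There is no real obstacle here; the content is entirely in identifying $\xi$ as a linear combination of $v_0$ and $w_0$, which is exactly what the condition $\omega_0 \wedge \xi = 0$ buys us through Proposition \ref{Equivalent conditions for wave cone condition}. The more delicate case, which I would expect to require separate treatment (and presumably introduces the notion of \emph{good} versus \emph{bad} $\Lambda$-segments), is when $\omega$ and $\omega_0$ are not parallel, so that \eqref{Potential condition} can genuinely fail as in the example $\omega_0 = v \wedge \xi$, $\omega = w \wedge \xi$ mentioned after \eqref{Potential condition}.
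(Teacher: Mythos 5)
Your proof is correct and follows essentially the same route as the paper's: both use $\omega \wedge \xi = 0$ together with $\omega = k\omega_0$ to conclude $\omega_0 \wedge \xi = 0$, hence $\xi \in \spann\{v_0, w_0\}$, and then reduce \eqref{Potential condition} to a single solvable scalar linear equation in $(c_1, c_2)$. The paper expresses this via an implicit linear dependence $d_1 v_0 + d_2 w_0 + d_3 \xi = 0$ and exhibits a particular solution with $c_1 = 0$, whereas you normalize $\xi = a v_0 + b w_0$ and state the equation $c_1 a + c_2 b = k$ in full generality; these are equivalent.
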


\begin{proof}
Since $\omega \wedge \xi = k v_0 \wedge w_0 \wedge \xi = 0$, we may write $d_1 v_0 + d_2 w_0 + d_3 \xi = 0$ for some $d_1,d_2,d_3 \in \R$, not all zero. Since $v_0 \wedge w_0 \neq 0$, we have $d_3 \neq 0$, which implies that $\{d_1,d_2\} \neq \{0\}$ (since $\xi \neq 0$).  If $d_2 \neq 0$, set $c_1 = 0$ and $c_2 = -kd_3/d_2$: then $[c_2 v_0 - c_1 w_0] \wedge \xi = k v_0 \wedge w_0 = \omega$. The case $d_1 \neq 0$ is similar.
\end{proof}

\begin{prop} \label{Equivalent conditions for good Lambda segments}
Suppose $\omega_0 \neq 0$ and $\omega \neq 0$ satisfy $\omega_0 \wedge \omega = 0$ but $\omega$ is not a multiple of $\omega_0$. The following conditions are equivalent.
\begin{enumerate}[\upshape (i)]
\item There exist $c_1,c_2 \in \R$ such that \eqref{Potential condition} holds.

\item $\omega \wedge \xi = 0$ but $\omega_0 \wedge \xi \neq 0$.

\item There exist $\tilde{v}, \tilde{w}_0 \in \R^4 \setminus \{0\}$ such that $\omega_0 = \tilde{v} \wedge \tilde{w}_0$ and $\omega = \tilde{v} \wedge \xi$.
\end{enumerate}
\end{prop}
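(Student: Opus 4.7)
The plan is to establish the cycle (iii) $\Rightarrow$ (ii) $\Rightarrow$ (iii) $\Rightarrow$ (i) $\Rightarrow$ (ii), relying throughout on Propositions \ref{Characterisation of Lambda segments} and \ref{Equivalent conditions for wave cone condition}, which already encode how simple bivectors decompose compatibly with a prescribed direction $\xi$. Since every listed condition involves $\omega = v \wedge w \neq 0$, $\omega_0 = v_0 \wedge w_0 \neq 0$, $\omega_0 \wedge \omega = 0$, and $\omega$ not a scalar multiple of $\omega_0$, the non-parallelism hypothesis is what lets us rule out the ``bad'' factorisation of $\omega_0$ in each step.

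For (iii) $\Rightarrow$ (ii), I would compute $\omega \wedge \xi = \tilde{v} \wedge \xi \wedge \xi = 0$, and observe that if $\omega_0 \wedge \xi = \tilde{v} \wedge \tilde{w}_0 \wedge \xi$ were also zero, then Proposition \ref{Equivalent conditions for wave cone condition} would force $\xi \in \spann\{\tilde{v},\tilde{w}_0\}$; writing $\xi = a \tilde{v} + b \tilde{w}_0$ would then give $\omega = \tilde{v} \wedge \xi = b\,\omega_0$, contradicting the assumption that $\omega$ is not a multiple of $\omega_0$.

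For (ii) $\Rightarrow$ (iii), Proposition \ref{Equivalent conditions for wave cone condition} applied to $\omega$ and $\xi$ produces $\tilde{v} \in \R^4 \setminus \{0\}$ with $\omega = \tilde{v} \wedge \xi$. Then Proposition \ref{Characterisation of Lambda segments}, applied to $\omega_0$ and $\omega$, gives either $\omega_0 = \tilde{v}_0 \wedge \xi$ or $\omega_0 = \tilde{v} \wedge \tilde{w}_0$. The former possibility contradicts $\omega_0 \wedge \xi \neq 0$, leaving exactly the factorisation claimed in (iii).

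Finally, (iii) $\Rightarrow$ (i) reduces to linear algebra inside the common two-plane $\spann\{v_0,w_0\} = \spann\{\tilde{v},\tilde{w}_0\}$ (the equality of these spans follows from $v_0 \wedge w_0 = \tilde v \wedge \tilde w_0 \neq 0$): writing $\tilde{v} = a v_0 + b w_0$ and setting $c_2 = a$, $c_1 = -b$ yields $(c_2 v_0 - c_1 w_0) \wedge \xi = \tilde{v} \wedge \xi = \omega$, solving \eqref{Potential condition}. Conversely, for (i) $\Rightarrow$ (ii) I would wedge \eqref{Potential condition} with $\xi$ to obtain $\omega \wedge \xi = 0$; and if $\omega_0 \wedge \xi = 0$ also held, Proposition \ref{Equivalent conditions for wave cone condition} would give $\xi \in \spann\{v_0,w_0\}$, so that $(c_2 v_0 - c_1 w_0)\wedge \xi$ would lie in the one-dimensional space $\Lambda^2(\spann\{v_0,w_0\}) = \R\,\omega_0$, once more contradicting the non-parallelism of $\omega$ and $\omega_0$. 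I do not foresee a serious obstacle: each implication is a one-line application of the earlier propositions, and the only point requiring any care is consistently using the hypothesis that $\omega$ is not proportional to $\omega_0$ to exclude the degenerate factorisation $\omega_0 = \tilde{v}_0 \wedge \xi$.
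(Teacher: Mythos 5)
Your proof is correct and follows essentially the same route as the paper: wedging \eqref{Potential condition} with $\xi$ for (i)\,$\Rightarrow$\,(ii), invoking Proposition~\ref{Characterisation of Lambda segments} and ruling out the bad factorisation $\omega_0=\tilde v_0\wedge\xi$ for (ii)\,$\Rightarrow$\,(iii), and then writing $\tilde v$ in the basis $\{v_0,w_0\}$ of the common two-plane for (iii)\,$\Rightarrow$\,(i). The only difference is cosmetic: you also prove the redundant implication (iii)\,$\Rightarrow$\,(ii), which is not needed once the cycle (i)\,$\Rightarrow$\,(ii)\,$\Rightarrow$\,(iii)\,$\Rightarrow$\,(i) is closed.
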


\begin{proof}[Proof of (i) $\implies$ (ii)]
Suppose (i) holds and fix $c_1$ and $c_2$. Then $\omega \wedge \xi = 0$.  Seeking contradiction, assume $\omega_0 \wedge \xi = 0$. Then there exist constants $d_1,d_2,d_3 \in \R$, not all zero, such that $d_1 v_0 + d_2 w_0 + d_3 \xi = 0$. If $d_3 = 0$, then $v_0$ and $w_0$ are linearly dependent, which gives a contradiction with $\omega_0 = v_0 \wedge w_0 \neq 0$. On the other hand, if $d_3 \neq 0$, then $\xi \in \spann\{v_0,w_0\}$ and thus $\omega = (c_2 v_0 - c_1 w_0) \wedge \xi$ is a multiple of $\omega_0 = v_0 \wedge w_0$, giving a contradiction.
\end{proof}

\begin{proof}[Proof of (ii) $\implies$ (iii)]
By Proposition \ref{Characterisation of Lambda segments}, we can write $\omega = \tilde{v} \wedge \xi$ and either $\omega_0 = \tilde{v_0} \wedge \xi$ or $\omega_0 = \tilde{v} \wedge \tilde{w}_0$. The latter condition must then hold in view of (ii).
\end{proof}

\begin{proof}[Proof of (iii) $\implies$ (i)]
By assumption, $\omega_0 = v_0 \wedge w_0 = \tilde{v} \wedge \tilde{w}_0$. Thus $\tilde{v} \in \text{span}\{v_0,w_0\}$. Writing $\tilde{v} = c_2 v_0 - c_1 w_0$ we obtain $[c_2 v_0 - c_1 w_0] \wedge \xi = \tilde{v} \wedge \xi = \omega$.
\end{proof}



Thus we are ready to define a class of $\Lambda$-segments for which there exist the desired compactly supported plane waves (which are constructed in Proposition \ref{Proposition on existence of potentials}). We then define the corresponding $\Lambda_g$-convexity notions needed in the sequel.

\begin{defin}
Suppose $V_0 \defeq (u_0,S_0,\omega_0) \in \mathscr{M}$, $V \defeq (u,S,\omega) \in \Lambda$ and $0 < \lambda < 1$. We say that
\[[V_0 - (1-\lambda) V, V_0 + \lambda V] \quad \text{is a \emph{good $\Lambda$-segment ($\Lambda_g$-segment)}} \]
if there exists $\xi \in (\R^3 \setminus \{0\}) \times \R$ such that \eqref{Wave cone condition 1}--\eqref{Wave cone condition 3} and one of the conditions
\begin{align}
& \omega = 0, \label{Good Lambda condition 1} \\
& \omega_0 \wedge \xi \neq 0, \label{Good Lambda condition 2} \\
& \omega = k \omega_0 \neq 0, \quad k \in \R \setminus \{-1/\lambda,1/(1-\lambda)\}, \label{Good Lambda condition 3} \\
& u = S = \omega_0 = 0 \label{Good Lambda condition 4}
\end{align}
holds. Otherwise we say that $[V_0 - (1-\lambda) V, V_0 + \lambda V]$ is a \emph{bad $\Lambda$-segment}.
\end{defin}

The restriction on $k \in \R$ in \eqref{Good Lambda condition 3} ensures that the endpoints $V_0 - (1-\lambda) V$ and $V_0 + \lambda V$ have non-vanishing $\omega$-components; this is used in Propositions \ref{Proposition on existence of potentials} and \ref{Proposition on approximation of laminates}.

We define a lamination convex hull in terms of  $\Lambda_g$-segments.

\begin{defin}
Let $Y \subset \mathscr{M}$. We define the sets $Y^{k,\Lambda_g}$, $k \in \mathbb{N}_0$, as follows:
\begin{enumerate}[\upshape (i)]
\item $Y^{0,\Lambda_g} \defeq Y$.

\item If $k \ge 1$ and $V_0 \in \mathscr{M}$, the point $V_0$ belongs to $Y^{k,\Lambda_g}$ if $V_0 \in Y^{k-1,\Lambda_g}$ or there exist $\lambda \in (0,1)$ and $V \in \mathscr{M}$ such that $[V_0 - (1-\lambda) V, V_0 + \lambda V] \subset \mathscr{M}$ is a good $\Lambda$-segment whose endpoints belong to $Y^{k-1,\Lambda_g}$.
\end{enumerate}

\noindent Furthermore, we denote $Y^{lc,\Lambda_g} \defeq \cup_{k \in \N_0} Y^{k,\Lambda_g}$.
\end{defin}

We also give a related notion for finite-order laminates; recall Remark \ref{Remark on lamination convex hull}.

\begin{defin} \label{Definition of good laminates}
Suppose $\nu = \sum_{\mathbf{j} \in \{1,2\}^N} \mu_\mathbf{j} \delta_{V_\mathbf{j}}$ is a finite-order laminate supported in $Y \subset \mathscr{M}$. We say that $\nu$ is a \emph{good finite-order laminate}, and denote $\nu \in \mathcal{L}_g(Y)$, if for all $\mathbf{j}' \in \{1,2\}^k$, $1 \le k \le N-1$, the $\Lambda$-segment $[V_{\mathbf{j}',1},V_{\mathbf{j}',2}]$ is good.
\end{defin}

\subsection{Localised plane waves along $\Lambda_g$ segments} \label{Potentials for good Lambda-segments}
To every $\Lambda_g$-segment there corresponds a potential, and thus we can localise the plane waves. 

\begin{prop} \label{Proposition on existence of potentials}
Let $W_0 = (u_0,S_0,v_0, w_0) \in \R^{17}$, and suppose $[V_0-(1-\lambda)\bar{V},V_0+\lambda \bar{V}] \subset \mathscr{M}$ is a $\Lambda_g$-segment. If $\omega_0 = 0$, then suppose $v_0 = w_0 = 0$. Fix a cube $Q \subset \R^4$ and let $\varepsilon > 0$.

There exist $W_\ell \defeq W_0 + (\bar{u}_\ell,\bar{S}_\ell,d\bar{\varphi}_\ell, d\bar{\psi}_\ell) \in W_0 + C_c^\infty(Q;\R^{17})$ with the following properties.
\begin{enumerate}[\upshape (i)]

\item $\mathcal{L}(V_\ell) = 0$, where $V_\ell = p(W_\ell)$.

\item For every $(x,t) \in Q$ there exists $\tilde{W} = \tilde{W}(x,t) \in \R^{17}$ such that
\begin{align*}
& \tilde{V} = p(\tilde{W}) \in [V_0 - (1-\lambda) \bar{V}, V_0 + \lambda \bar{V}], \\
& |W_\ell(x,t) - \tilde{W}| < \varepsilon, \quad |V_\ell(x,t) - \tilde{V}| < \varepsilon.
\end{align*}

\item For every $\ell \in \N$ there exist pairwise disjoint open sets $A_1,A_2 \subset Q$ such that
\begin{align*}
& V_\ell(x,t) = V_0 + \lambda \bar{V} \text{ in } A_1 \text{ with } \abs{A_1} > (1-\varepsilon) (1-\lambda) \abs{Q}, \\
& V_\ell(x,t) = V_0 -(1-\lambda) \bar{V} \text{ in } A_2 \text{ with } \abs{A_2} > (1-\varepsilon) \lambda \abs{Q}.
\end{align*}
Furthermore, $W_\ell$ is locally constant in $A_1$ and $A_2$. For $j = 1,2$, writing $W_\ell = (u_j,S_j,v_j \wedge w_j)$ in $A_j$, we have either $v_j = w_j = 0$ or $v_j \wedge w_j \neq 0$.

\item $V_\ell \rightharpoonup V$ in $L^2(Q;\R^{15})$.

\end{enumerate}
\end{prop}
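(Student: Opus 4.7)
The plan is to realize the Clebsch Ansatz \eqref{Potential Ansatz 1}--\eqref{Potential Ansatz 2} developed in the preceding subsections, treating the four cases \eqref{Good Lambda condition 1}--\eqref{Good Lambda condition 4} in the definition of a $\Lambda_g$-segment separately. The guiding philosophy is that forming potentials \emph{inside} the wedge $d\bar\varphi_\ell\wedge d\bar\psi_\ell$ preserves simplicity automatically and, via the identity $\xi\wedge\xi=0$, kills the potentially dangerous term quadratic in $h''$.

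\textbf{Algebraic setup.} In cases \eqref{Good Lambda condition 2} and \eqref{Good Lambda condition 3} I invoke Propositions \ref{Potentials for easy Lambda segments} and \ref{Equivalent conditions for good Lambda segments} to produce constants $c_1,c_2\in\R$ together with (in case \eqref{Good Lambda condition 2}) an alternative factorization $\omega_0 = \tilde v\wedge\tilde w_0$ such that $(c_2\tilde v - c_1\tilde w_0)\wedge\xi = \bar\omega$. Case \eqref{Good Lambda condition 1} ($\bar\omega = 0$) needs no electromagnetic perturbation. Case \eqref{Good Lambda condition 4} is the main obstacle: the standing hypothesis $v_0 = w_0 = 0$ makes the symmetric Ansatz degenerate, since both $d\varphi_\ell$ and $d\psi_\ell$ would be proportional to $\xi$ and their wedge would vanish. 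I circumvent this by writing $\bar\omega = \bar v\wedge\xi$ (Proposition \ref{Equivalent conditions for wave cone condition}) and switching to an \emph{asymmetric} Ansatz in which one potential carries the non-oscillating direction $\bar v$ and the other the $\xi$-oscillation.

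\textbf{Profile, cutoff, and the Ansatz.} Pick $h\in C^\infty(\R)$, periodic with zero mean, such that $h''$ is constant equal to $\lambda$ on a subinterval of relative length $1-\lambda$, constant equal to $-(1-\lambda)$ on a subinterval of relative length $\lambda$, and takes values in $[-(1-\lambda),\lambda]$ on smooth transitions of total relative length $<\varepsilon/2$. Choose $\chi\in C_c^\infty(Q)$ with $0\le\chi\le 1$ and $\chi\equiv 1$ on an open subset $Q'\subset Q$ satisfying $|Q\setminus Q'|<\varepsilon|Q|/2$; in case \eqref{Good Lambda condition 4} further arrange the level sets of $\chi$ so that $\nabla\chi\in\spann\{\bar v,\xi\}$, which preserves simplicity of the bivector through the boundary layer. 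For cases \eqref{Good Lambda condition 2}--\eqref{Good Lambda condition 3} set
\[\bar\varphi_\ell = \frac{c_1}{\ell}\chi\, h'(\ell(x,t)\cdot\xi),\qquad \bar\psi_\ell = \frac{c_2}{\ell}\chi\, h'(\ell(x,t)\cdot\xi),\]
while for case \eqref{Good Lambda condition 4} set
\[\bar\varphi_\ell = \chi\,\bar v\cdot(x-x_0,t-t_0),\qquad \bar\psi_\ell = \frac{1}{\ell}\chi\, h'(\ell(x,t)\cdot\xi),\]
with $(x_0,t_0)$ the centre of $Q$. For the fluid side, apply Lemma \ref{l:fluidpotential} to $\phi_\ell = (1/\ell^2)\chi h(\ell(x,t)\cdot\xi)$ with the symbol $U$ built from $(\bar u,\bar S)$, obtaining $(\bar u_\ell,\bar S_\ell) = P_U[\phi_\ell]\in C_c^\infty(Q)$.

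\textbf{Verification.} Property (i) is immediate from Lemma \ref{l:fluidpotential} together with the identity $(d\bar\varphi_\ell\wedge d\bar\psi_\ell)\wedge(d\bar\varphi_\ell\wedge d\bar\psi_\ell) = 0$. Expanding $v_\ell\wedge w_\ell = (v_0+d\bar\varphi_\ell)\wedge(w_0+d\bar\psi_\ell)$ and using $\xi\wedge\xi=0$ to kill the $h''$-quadratic term yields, in cases \eqref{Good Lambda condition 2}--\eqref{Good Lambda condition 3},
\[v_\ell\wedge w_\ell = v_0\wedge w_0 + \chi\, h''(\ell(x,t)\cdot\xi)\,\bar\omega + O(1/\ell),\]
with the error carrying either a factor $d\chi$ or $1/\ell$; an analogous formula (with $\chi^2$ in place of $\chi$, plus a boundary-layer term that lies in $\R\cdot\bar\omega$ thanks to the alignment $\nabla\chi\in\spann\{\bar v,\xi\}$) holds in case \eqref{Good Lambda condition 4}. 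Thus on the open set where $\chi\equiv 1$, $d\chi = 0$, and $h''$ is constant equal to $\lambda$ (resp. $-(1-\lambda)$), $V_\ell$ attains exactly $V_0+\lambda\bar V$ (resp. $V_0-(1-\lambda)\bar V$) once $\ell$ is large, proving (iii); local constancy of $W_\ell$ there is built into the Ansatz. For (ii), at each $(x,t)$ take $\tilde W$ to be a Clebsch lift of $V_0 + t(x,t)\bar V$, where $t(x,t)\in[-(1-\lambda),\lambda]$ is the value of $\chi h''$ (resp. the effective $h''$-coefficient in case \eqref{Good Lambda condition 4}); the error $|W_\ell-\tilde W|$ is $O(1/\ell)$ away from the $\nabla\chi$-layer and is absorbed inside the layer by the alignment condition. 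Finally, (iv) follows from $\langle h''\rangle = 0$ via the Riemann--Lebesgue weak convergence of the leading oscillating term, while the cross-term $d\bar\varphi_\ell\wedge d\bar\psi_\ell$ is either $O(1/\ell)$ (cases \eqref{Good Lambda condition 2}--\eqref{Good Lambda condition 3}) or a bounded function with vanishing mean (case \eqref{Good Lambda condition 4}).
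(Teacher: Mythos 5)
The strategy for cases \eqref{Good Lambda condition 1}--\eqref{Good Lambda condition 3} coincides with the paper's: the Clebsch Ansatz \eqref{Potential Ansatz 1}--\eqref{Potential Ansatz 2} with the cancellation $\xi\wedge\xi=0$, Lemma \ref{l:fluidpotential} on the fluid side, and the counting via the sawtooth profile (Lemma \ref{Lemma on sawtooth functions}). That part is correct.

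For case \eqref{Good Lambda condition 4}, however, there is a genuine gap. You replace the paper's pair of \emph{small-amplitude} oscillations in directions $\bar v$ and $\bar w$ by the asymmetric Ansatz $\bar\varphi_\ell=\chi\,\bar v\cdot(x-x_0,t-t_0)$, $\bar\psi_\ell=\ell^{-1}\chi h'(\ell(x,t)\cdot\xi)$. The first potential is of size $O(l(Q))$, not $O(1/\ell)$, so its gradient contains the boundary-layer term $\bigl[\bar v\cdot(x-x_0,t-t_0)\bigr]\nabla\chi$, which is of magnitude $\sim l(Q)\,|\nabla\chi|\sim 1/\varepsilon$ and does \emph{not} vanish as $\ell\to\infty$. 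You attempt to control it by requiring $\nabla\chi\in\spann\{\bar v,\xi\}$, but this is unattainable: any $\chi$ with $\nabla\chi\in\spann\{\bar v,\xi\}$ pointwise is constant in the two-dimensional complement $\spann\{\bar v,\xi\}^\perp\subset\R^4$, hence cannot lie in $C_c^\infty(Q)$ for a $4$-dimensional cube $Q$. Consequently $\nabla\chi$ necessarily has a component $f\perp\spann\{\bar v,\xi\}$, and in the wedge
\[
d\bar\varphi_\ell\wedge d\bar\psi_\ell
=\chi\,b_1\,\bar v\wedge\xi+c_2\,b_1\,f\wedge\xi+O(1/\ell),
\]
where $b_1\approx\chi h''$ and $c_2\sim 1/\varepsilon$ comes from $[\bar v\cdot(x-x_0,t-t_0)]\nabla\chi$. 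The term $c_2b_1\,f\wedge\xi\notin\R\bar\omega$ is of order $1/\varepsilon$, so $V_\ell$ strays far from the $\Lambda_g$-segment in the cutoff layer, and likewise $W_\ell$ cannot be $\varepsilon$-close to any Clebsch lift of a segment point; condition (ii) fails. The paper avoids this by keeping both Clebsch perturbations of amplitude $O(1/\ell)$ — using $h'$ oscillating in direction $\bar v$ and $h'$ oscillating in direction $\bar w$ (first for $\lambda=1/2$, then reducing general $\lambda$ to this together with case \eqref{Good Lambda condition 3}) — so that every boundary-layer error carries an explicit factor $1/\ell$ and can be made small by choosing $\ell$ large.
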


For the proof we first specify the oscillating functions that we intend to use. Their first derivatives can be chosen to be mollifications of 1-periodic sawtooth functions.

\begin{lem} \label{Lemma on sawtooth functions}
Suppose $0 < \lambda < 1$ and $\varepsilon > 0$. Then there exists $h \in C^\infty(\R)$ with the following properties:

\begin{enumerate}[\upshape (i)]
\item $h^{\prime \prime}$ is 1-periodic.

\item $-(1-\lambda) \le h^{\prime \prime} \le \lambda$.

\item $\int_0^1 h^{\prime \prime}(s) \, ds = 0$. (Thus, $h'$ is 1-periodic.)

\item $\abs{\{s \in [0,1] \colon h^{\prime \prime}(s) = \lambda\}} \ge (1-\varepsilon)(1-\lambda)$.

\item $\abs{\{s \in [0,1] \colon h^{\prime \prime}(s) = -(1-\lambda)\}} \ge (1-\varepsilon)\lambda$.
\end{enumerate}
\end{lem}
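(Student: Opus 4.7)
The plan is to build $h''$ as a smoothed-out periodic step function that takes its two extreme values on appropriately sized sets, and then obtain $h$ by integrating twice.

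First I would define a piecewise-constant $g \colon \R \to \R$ that is $1$-periodic and on $[0,1)$ satisfies
\[
g(s) = \lambda \text{ for } s \in (0,1-\lambda), \qquad g(s) = -(1-\lambda) \text{ for } s \in (1-\lambda,1).
\]
Then $g$ is bounded between $-(1-\lambda)$ and $\lambda$, and its average over one period is $\lambda(1-\lambda) + (-(1-\lambda))\lambda = 0$, so conditions (ii) and (iii) hold for $g$. Of course $g$ is not smooth, so I mollify: fix a standard nonnegative mollifier $\rho \in C_c^\infty(\R)$ with $\supp(\rho) \subset (-1,1)$ and $\int \rho = 1$, and for $\delta > 0$ set $\rho_\delta(s) = \delta^{-1}\rho(s/\delta)$ and $g_\delta \defeq g * \rho_\delta$. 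Since $\rho_\delta \ge 0$ has mass $1$, convolution preserves the pointwise bounds, so $-(1-\lambda) \le g_\delta \le \lambda$ everywhere; since $g$ is $1$-periodic so is $g_\delta$, giving (i); and $\int_0^1 g_\delta = \int_0^1 g = 0$, giving (iii).

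For (iv) and (v), note that on any subinterval of $[0,1)$ lying at distance more than $\delta$ from the jump points $\{0, 1-\lambda\}$ of $g$ (modulo $1$), the convolution $g_\delta$ agrees exactly with $g$. Thus
\[
|\{s \in [0,1] \colon g_\delta(s) = \lambda\}| \ge (1-\lambda) - 2\delta, \qquad |\{s \in [0,1] \colon g_\delta(s) = -(1-\lambda)\}| \ge \lambda - 2\delta,
\]
and choosing $\delta$ small enough (e.g.\ $\delta < \varepsilon \min(\lambda,1-\lambda)/2$) yields (iv) and (v).

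Finally I set $h''(s) \defeq g_\delta(s)$, define $h'(s) \defeq \int_0^s g_\delta(\sigma)\,d\sigma$, and $h(s) \defeq \int_0^s h'(\sigma)\,d\sigma$. Then $h \in C^\infty(\R)$, $h''$ is $1$-periodic by construction, and conditions (i)--(v) are satisfied. There is no real obstacle here: the only thing to watch is that mollification with a nonnegative unit-mass kernel simultaneously preserves the sup/inf bounds of (ii), the mean-zero condition of (iii), and the periodicity of (i), while only slightly shrinking the level sets in (iv) and (v).
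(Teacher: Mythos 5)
Your construction is correct and matches what the paper hints at just before the lemma (``[the] first derivatives can be chosen to be mollifications of $1$-periodic sawtooth functions''): the mollified step function $g_\delta$ is exactly the derivative of a mollified sawtooth, so $h'$ is a mollified sawtooth and $h''=g_\delta$. The paper itself gives no proof, treating the lemma as routine; your argument supplies the missing details cleanly — the mean-zero check, the preservation of the bounds and periodicity under convolution with a nonnegative unit-mass kernel, and the quantitative shrinkage of the level sets — so nothing further is needed.
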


\begin{proof}[Proof of Proposition \ref{Proposition on existence of potentials}, the cases \eqref{Good Lambda condition 1}--\eqref{Good Lambda condition 3}]
Suppose one of the conditions \eqref{Good Lambda condition 1}--\eqref{Good Lambda condition 3} holds. Define the perturbation $(\bar{u}_\ell,\bar{S}_\ell,d\bar{\varphi}_\ell, d\bar{\psi}_\ell)$ via Lemma \ref{l:fluidpotential} and \eqref{Potential Ansatz 1}--\eqref{Potential Ansatz 2}. Claims (i) and (iv) are clear. In (ii) we choose $\tilde{W} = W_0 + \chi(x,t) h^{\prime \prime}((\ell (x,t) \cdot \xi) \bar{W}$.

In (iii) let $\varepsilon > 0$, fix a cube $\tilde{Q} \subset Q$ with $|\tilde{Q}| > (1-\varepsilon/3) \abs{Q}$ and choose $\chi$ such that $\chi = 1$ in $\tilde{Q}$. Cover $\tilde{Q}$, up to a set of measure $\varepsilon \abs{Q}/3$, by cubes $Q_1,\ldots,Q_N$ with one of the sides parallel to $\xi$. We wish to show that $
\abs{\{y \in Q_k \colon h^{\prime \prime}(\ell y \cdot \xi) = \lambda\}}
\ge \abs{Q_k} (1-\varepsilon/3) (1-\lambda)$ for every large enough $\ell \in \N$; in (iii) we may then choose $A_1 = \cup_{k=1}^N \{y \in Q_k \colon h^{\prime \prime}(\ell y \cdot \xi) = \lambda\}$. Similarly, $A_2 = \cup_{k=1}^N \{y \in Q_k \colon h^{\prime \prime}(\ell y \cdot \xi) = 1-\lambda\}$.

Choose an orthonormal basis $\{f_1,f_2,f_3,f_4\}$ of $\R^4$ such that $f_1 = \xi/\abs{\xi}$ and
\[
Q_k = \{y \in \R^4 \colon \zeta \cdot f_j \le y \cdot f_j \le \zeta \cdot f_j + l(Q_k)\}
\]
for some $\zeta \in \R^4$. In order to switch to coordinates where $Q_k$ has sides parallel to coordinate axes, define $L \defeq \sum_{j=1}^4 e_j \otimes f_j \in \R^{4 \times 4}$, so that $L f_j = e_j$ for $j = 1,\dots,4$ and therefore $L \in O(4)$. Then, denoting $z = L y$,
\begin{align*}
   Q_k
&= \{y \in \R^4 \colon L \zeta \cdot e_j \le L y \cdot e_j \le L \zeta \cdot e_j + l(Q_k)\} \\
&= L^{-1} \{z \in \R^4 \colon L \zeta \cdot e_j \le z \cdot e_j \le L \zeta \cdot e_j + l(Q_k)\} \\
&= L^{-1} \left( \prod_{j=1}^4 [(L\zeta)_j,(L\zeta)_j + l(P)] \right) = L^{-1}(L Q_k).
\end{align*}
Thus
\begin{align*}
   \abs{\{y \in Q_k \colon h^{\prime \prime}(\ell y \cdot \xi) = \lambda\}}
= &\abs{L^{-1} \{z \in L Q_k \colon h^{\prime \prime}(\ell \abs{\xi} z_1) = \lambda\}} \\
= &l(Q_k)^3 |\{s \in [(L\zeta)_1,(L\zeta)_1 + l(Q_k)] \colon \\
&h^{\prime \prime}(\ell \abs{\xi} z_1) = \lambda)\}| \\
\ge &\abs{Q_k} (1-\varepsilon/3) (1-\lambda)
\end{align*}
for all large $\ell \in \N$.

To finish the proof of (iii), write $W_\ell = (u_j,S_j,v_j \wedge w_j)$ in $A_j$, where $j \in \{1,2\}$. In the case \eqref{Good Lambda condition 1}, if $\omega_0 = 0$, then $v_j = w_j = 0$ by assumption, and if $\omega_0 \neq 0$, then $\omega_0 -(1-\lambda) \bar{\omega} \neq 0$ and $\omega_0 + \lambda \bar{\omega} \neq 0$. Next, in the case \eqref{Good Lambda condition 2}, by \eqref{waveconebivectors} we have $(\omega_0 + t \bar{\omega}) \wedge \xi = \omega_0 \wedge \xi \neq 0$, hence in particular $\omega_0 -(1-\lambda) \bar{\omega} \neq 0$ and $\omega_0 + \lambda \bar{\omega} \neq 0$. Finally, the case \eqref{Good Lambda condition 3} follows from the restriction $k \notin \{1/\lambda,1/(1-\lambda)\}$.
\end{proof}

The case \eqref{Good Lambda condition 4} requires a separate argument since in this case, \eqref{Potential condition} has no solutions $c_1,c_2 \in \R$. In fact, if $\lambda = 1/2$, we let $d\bar{\varphi}_\ell$ and $d\bar{\psi}_\ell$ oscillate in \emph{different} directions, and thus $W_\ell$ is not a plane wave. However, $V_\ell = p(W_\ell)$ oscillates along the $\Lambda_g$-segment $[-V/2,V/2]$. The general case $\lambda \in (0,1)$ then follows by combining with the case \eqref{Good Lambda condition 3}.

\begin{proof}[Proof of Proposition \ref{Proposition on existence of potentials}, the case \eqref{Good Lambda condition 4}]
The case $\bar{\omega} = 0$ being obvious, assume $\bar{\omega} = \bar{v} \wedge \bar{w} \neq 0$. Suppose first $\lambda = 1/2$.

Without loss of generality, assume $\bar{v} \cdot \bar{w} = 0$. Let $\varepsilon > 0$ and choose $\tilde{Q} \subset Q$ and $\chi$ as above. Then
\[\bar{\varphi}_\ell(x,t) \defeq \ell^{-1} \chi(x,t) h'((x,t) \cdot \ell \bar{v}), \qquad \bar{\psi}_\ell(x,t) \defeq 2 \ell^{-1} \chi(x,t) h'((x,t) \cdot \ell \bar{w})\]
have the sought properties for all large enough $\ell \in \N$.

Indeed, for (ii) choose $\tilde{W} = (0,0,\chi(x,t) h^{\prime \prime}((x,t) \cdot \ell \bar{v}) \bar{v}, 2 \chi(x,t) h^{\prime \prime} ((x,t) \cdot \ell \bar{w}) \bar{w})$. For (iii), note that when $(x,t) \in \tilde{Q}$, we have
\begin{equation} \label{V_l in good set}
V_\ell(x,t) = \begin{cases}
                  V_0 + 2^{-1} (0,0,\bar{v} \wedge \bar{w}) & \text{when } h^{\prime \prime}((x,t) \cdot \ell \bar{v}) = h^{\prime \prime}((x,t) \cdot \ell \bar{w}) = \pm 2^{-1}, \\
                  V_0 - 2^{-1} (0,0,\bar{v} \wedge \bar{w}) & \text{when } h^{\prime \prime}((x,t) \cdot \ell \bar{v}) = - h^{\prime \prime}((x,t) \cdot \ell \bar{w}) = \pm 2^{-1}.
                \end{cases}
\end{equation}
Cover $\tilde{Q}$ up to a small set by cubes $Q_1,\ldots,Q_N$ with two sides parallel to $\bar{v}_x$ and $\bar{w}_x$; recall that $\bar{v}_x \cdot \bar{w}_x = 0$.

For $k = 1,\ldots,N$ we get $\{(x,t) \in Q_k \colon  h^{\prime \prime}((x,t) \cdot \ell \bar{v}) = h^{\prime \prime}((x,t) \cdot \ell \bar{w}) = 2^{-1}\} \ge \abs{\{s \in [0,1] \colon h^{\prime \prime}(s) = 2^{-1}\}}^2 \abs{Q_k} - O(1/\ell) > \abs{Q_k} (1-\varepsilon/3) (1/2)^2$ as in the previous proof, and a similar inequality holds for the other three cases of \eqref{V_l in good set}. This completes the proof of the case $\lambda = 1/2$.

We then cover the case $\lambda \neq 1/2$. Let $0 < \delta < \min \{\lambda,1-\lambda\}$. Using the case above, we choose $d\bar{\varphi}_\ell$, $d\bar{\psi}_\ell$ satisfying claims (i)--(iv) for $[V_0 - \delta \bar{V}, V_0 + \delta \bar{V}]$ and $\varepsilon/2$. Note that $d\bar{\varphi}_\ell \wedge d\bar{\psi}_\ell = \delta \bar{\omega} \neq 0$ in $A_1$ and $d\bar{\varphi}_\ell \wedge d\bar{\psi}_\ell = -\delta \bar{\omega} \neq 0$ in $A_2$. We then cover the sets $A_1$ and $A_2$ by cubes up to a small set and apply the case \eqref{Good Lambda condition 3} in the cubes. (The last claim of (iii) is clear.)
\end{proof}

\begin{rem}
We have looked for solutions of \eqref{Aim on potentials} of the form \eqref{Potential Ansatz 1}--\eqref{Potential Ansatz 2}, and in some special cases, a solution does not exist. It is conceivable that another Ansatz would satisfy \eqref{Aim on potentials} in some of cases the cases excluded by \eqref{Potential Ansatz 1}--\eqref{Potential Ansatz 2}. This would essentially require a degenerate Darboux Theorem with a Dirichlet boundary condition -- more concretely, solving $d\varphi_\ell \wedge d\psi_\ell = v_0 \wedge w_0 + h^{\prime \prime}(\ell (x,t) \cdot \xi) v \wedge w + O(1/\ell)$ with $(d\varphi_\ell,d\psi_\ell) = (v_0,w_0)$ on $\partial Q$. However, such theorems are remarkably difficult to prove and to the authors' knowledge, a suitable existence result is not available at this point; we refer to~\cite[\textsection 14]{CDK} and the references contained therein.
\end{rem}

\section{Characterisations of the relative interior of the lamination convex hull} \label{Characterisations of the relative interior of the lamination convex hull}
Our next task is to find a suitable (relatively open) set $\U_{r,s} \subset \text{int}_{\mathscr{M}}(K_{r,s}^{lc,\Lambda})$ where to run convex integration. Since we have only constructed potentials for $\Lambda_g$-segments, we would like to produce $\U_{r,s}$ by using $\Lambda_g$-segments only. The choice of $\U_{r,s}$ is, however, non-trivial, as discussed in \textsection \ref{A rigidity result on the good Lambda-hull}. Nevertheless, eventually the following simple definition turns out to suffice.
\begin{defin}
We denote
\[\U_{r,s} \defeq \text{int}_{\mathscr{M}}(K_{r,s}^{lc,\Lambda}).\]
\end{defin}
In the main result of this chapter, Theorem \ref{Hull theorem}, we give several characterisations of $\U_{r,s}$ and show, in particular, that $0 \in \U_{r,s}$.

\subsection{A rigidity result on the good $\Lambda$-hull} \label{A rigidity result on the good Lambda-hull}
Initially, it appears natural to choose some set $\U_{r,s} \subset K_{r,s}^{lc,\Lambda_g}$ for strict subsolutions. However, $K_{r,s}^{lc,\Lambda_g}$ turns out to be rather small; in fact,
\[
E_0 = B_0 \times u_0 \qquad \text{for every } V_0 = (u_0,S_0,B_0,E_0) \in K^{lc,\Lambda_g}.
\]

\begin{prop} \label{Proposition on rigidity of good Lamdba}
Suppose $[V_0 - (1-\lambda) V, V_0 + \lambda V] \subset \mathscr{M}$ is a $\Lambda_g$-segment, and assume that $V_1 \defeq V_0 + \lambda V$ and $V_2 \defeq V_0 - (1-\lambda) V$ satisfy $E_j = B_j \times u_j$. Then $E_0 = B_0 \times u_0$.
\end{prop}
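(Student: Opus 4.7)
The plan is to verify $E_0 = B_0 \times u_0$ in each of the four cases \eqref{Good Lambda condition 1}--\eqref{Good Lambda condition 4} defining a $\Lambda_g$-segment. As a preliminary observation, expanding $E_j = B_j \times u_j$ via $V_j = V_0 + \epsilon_j V$ with $\epsilon_1 = \lambda$, $\epsilon_2 = -(1-\lambda)$ and forming the convex combination with weights $(1-\lambda)$ and $\lambda$ produces the identity
\[
E_0 = B_0 \times u_0 + \lambda(1-\lambda)\, B \times u,
\]
so the proposition reduces to showing $B \times u = 0$.

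Cases \eqref{Good Lambda condition 1} ($\omega = 0$, hence $B = 0$) and \eqref{Good Lambda condition 4} ($u = 0$) are immediate. In Case \eqref{Good Lambda condition 3}, write $\omega = k\omega_0$, so $B_j = (1+\epsilon_j k)B_0$ and $E_j = (1+\epsilon_j k)E_0$; the restriction $k \notin \{-1/\lambda,\, 1/(1-\lambda)\}$ keeps both scalar factors nonzero, so dividing $E_j = B_j \times u_j$ by $1 + \epsilon_j k$ gives $E_0 = B_0 \times (u_0 + \epsilon_j u)$. Subtracting the two instances yields $B_0 \times u = 0$, and hence $B \times u = k(B_0 \times u) = 0$.

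Case \eqref{Good Lambda condition 2} is the main obstacle and I would handle it by recasting $E = B \times u$ in the bivector language of \textsection \ref{Maxwell two-forms}. Introduce $U_j \defeq (u_j, 1) \in \R^4$ and $\tilde u \defeq (u, 0) \in \R^4$, and note that a direct computation from \eqref{Definition of F} yields $i_{U_j}\omega_j \cong (B_j \times u_j - E_j,\, E_j \cdot u_j)$ for the interior product, so $i_{U_j}\omega_j = 0$ is equivalent to $E_j = B_j \times u_j$. By Proposition \ref{Equivalent conditions for good Lambda segments}, choose $\tilde v, \tilde w_0 \in \R^4$ with $\omega_0 = \tilde v \wedge \tilde w_0$ and $\omega = \tilde v \wedge \xi$, so that $\omega_j = \tilde v \wedge (\tilde w_0 + \epsilon_j \xi)$. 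The Case \eqref{Good Lambda condition 2} assumption $\omega_0 \wedge \xi \neq 0$ combined with $\omega \wedge \xi = 0$ precludes $\omega$ being a multiple of $\omega_0$, so $\omega_j \neq 0$ and $\tilde v, \tilde w_0 + \epsilon_j \xi$ are linearly independent. Since $i_U(v \wedge w) = (v \cdot U) w - (w \cdot U) v$ vanishes (for linearly independent $v, w$) iff $U \cdot v = U \cdot w = 0$, the hypotheses $E_j = B_j \times u_j$ translate to
\[
U_j \cdot \tilde v = 0 \quad \text{and} \quad U_j \cdot (\tilde w_0 + \epsilon_j \xi) = 0 \qquad (j = 1, 2).
\]

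The pair $U_j \cdot \tilde v = U_0 \cdot \tilde v + \epsilon_j\, \tilde u \cdot \tilde v = 0$ forms a $2\times 2$ linear system of nonzero determinant in $(U_0 \cdot \tilde v,\, \tilde u \cdot \tilde v)$, forcing both to vanish. Expanding the second pair yields
\[
U_0 \cdot \tilde w_0 + \epsilon_j (U_0 \cdot \xi + \tilde u \cdot \tilde w_0) + \epsilon_j^2\, \tilde u \cdot \xi = 0 \qquad (j = 1, 2);
\]
the wave cone condition $\xi_x \cdot u = 0$ together with $\tilde u_t = 0$ gives $\tilde u \cdot \xi = 0$, and solving the remaining linear system produces $U_0 \cdot \tilde w_0 = 0$. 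Combined with $U_0 \cdot \tilde v = 0$, this says $U_0 \perp \spann\{\tilde v, \tilde w_0\}$, i.e., $i_{U_0}\omega_0 = 0$, which is exactly $E_0 = B_0 \times u_0$.
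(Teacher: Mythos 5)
Your proof is correct and follows exactly the paper's two-part strategy: the paper's own sketch says to first show $(B_1-B_2)\times(u_1-u_2)=0$ (i.e.\ $B\times u = 0$) from the $\Lambda_g$-conditions, and then to finish by ``a bit of algebra'' --- your preliminary identity $E_0 = B_0\times u_0 + \lambda(1-\lambda)\,B\times u$ is precisely that algebra. What you supply that the paper leaves implicit is the case analysis over the four $\Lambda_g$-conditions, and your treatment of case \eqref{Good Lambda condition 2} via the interior product $i_{U_j}\omega_j$ and the factorization $\omega_j = \tilde v\wedge(\tilde w_0 + \epsilon_j\xi)$ from Proposition~\ref{Equivalent conditions for good Lambda segments} is a clean way to exploit the paper's bivector formalism; I verified the computation $i_{U}\omega \cong (B\times u - E,\, E\cdot u)$ and the ensuing linear systems, including the use of the wave-cone condition $\xi_x\cdot u=0$ to kill the quadratic-in-$\epsilon_j$ term.
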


The proof consists of two parts. First, the $\Lambda_g$-conditions and the assumption $E_j = B_j \times u_j$ lead to the conclusion $(B_1-B_2) \times (u_1-u_2) = 0$. Then a bit of algebra gives
\[\lambda B_1 \times u_1 + (1-\lambda) B_2 \times u_2 = (\lambda B_1 + (1-\lambda) B_2) \times (\lambda u_1 + (1-\lambda) u_2),\]
that is, $E_0 = B_0 \times u_0$.

At first sight, Proposition \ref{Proposition on rigidity of good Lamdba} seems to prevent convex integration unless potentials are found for bad $\Lambda$-segments. However, this rigidity disappears once one considers $\Lambda_g$-convex hulls of \emph{relatively open} sets. Indeed, whenever $\U$ is bounded and relatively open in $\mathscr{M}$, we have $\U^{lc,\Lambda_g} = \U^{lc,\Lambda}$ (see Proposition \ref{Theorem on hulls of relatively open sets}). The basic reason behind this phenomenon is the fact that, loosely speaking, bad $\Lambda$-segments become good when translated to almost any direction.

\subsection{Laminates of relatively open sets in $\mathscr{M}$}
We start the proof of Proposition \ref{Theorem on hulls of relatively open sets} by showing that the class of relatively open sets in $\mathscr{M}$ is closed with respect to taking laminates:

\begin{prop} \label{Lemma on laminates of relatively open sets}
Suppose $\U$ is relatively open in $\mathscr{M}$. Then $U^{lc,\Lambda_g}$ is relatively open in $\mathscr{M}$.
\end{prop}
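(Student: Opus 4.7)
We induct on $k \geq 0$, showing that for every $V_0 \in \U^{k,\Lambda_g}$ some relatively open neighborhood of $V_0$ in $\mathscr{M}$ is contained in $\U^{lc,\Lambda_g}$. Since $\U^{lc,\Lambda_g} = \bigcup_k \U^{k,\Lambda_g}$, this suffices, and the base case $k=0$ is the hypothesis on $\U$.

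For the inductive step, let $V_0 \in \U^{k,\Lambda_g}$. If $V_0 \in \U^{k-1,\Lambda_g}$ the induction hypothesis applies directly. Otherwise, by definition there exist $\lambda \in (0,1)$ and $V \in \mathscr{M}$ such that the points $V_j := V_0 + \lambda_j V$, with $\lambda_1 = \lambda$ and $\lambda_2 = -(1-\lambda)$, lie in $\U^{k-1,\Lambda_g}$ and $[V_2,V_1]$ is a good $\Lambda$-segment. By induction, each $V_j$ has a relatively open neighborhood $\mathcal{N}_j \subset \U^{lc,\Lambda_g}$. The plan is to produce, for every $\tilde V_0$ in a sufficiently small relatively open neighborhood of $V_0$ in $\mathscr{M}$, a perturbed direction $\tilde V \in \mathscr{M}$ (continuous in $\tilde V_0$) such that the perturbed endpoints $\tilde V_j := \tilde V_0 + \lambda_j \tilde V$ lie in $\mathcal{N}_j$, the difference $\tilde V_1 - \tilde V_2 \in \Lambda$, and $[\tilde V_2,\tilde V_1] \subset \mathscr{M}$ is a good $\Lambda$-segment; such $\tilde V_0$ then lies in $\U^{lc,\Lambda_g}$.

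The construction of $\tilde V$ splits into cases according to which of \eqref{Good Lambda condition 1}--\eqref{Good Lambda condition 4} the original segment satisfies. In case \eqref{Good Lambda condition 1} ($\omega = 0$) one simply takes $\tilde V := V$: the constraint $(\tilde \omega_0 + t\tilde\omega)\wedge(\tilde\omega_0 + t\tilde\omega) = 0$ is preserved trivially and type \eqref{Good Lambda condition 1} persists. In case \eqref{Good Lambda condition 2} ($\omega_0 \wedge \xi \neq 0$), Proposition \ref{Equivalent conditions for good Lambda segments}(iii) yields a factorization $\omega_0 = \bar v\wedge \bar w_0$, $\omega = \bar v\wedge\xi$; continuity of simple factorizations on the relatively open locus $\{\omega_0\wedge\xi\neq 0\}$ produces nearby factors $\bar v', \bar w_0'$, and setting $\tilde\omega := \bar v'\wedge\xi$ forces $\tilde\omega_0\wedge\tilde\omega = 0$, so the segment lies in $\mathscr{M}$ by Proposition \ref{Characterisation of Lambda segments}. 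The fluid part $(\tilde u, \tilde S)$ is then selected near $(u,S)$ in the (continuously varying, non-trivial) affine subspace of solutions of \eqref{Wave cone condition 1b}--\eqref{Wave cone condition 2b} associated to $\xi$. In case \eqref{Good Lambda condition 3} ($\omega = k\omega_0 \neq 0$), set $\tilde\omega := k\tilde\omega_0$, select $\tilde\xi \in \mathrm{span}\{\tilde v_0, \tilde w_0\}$ close to $\xi$, and complete $(\tilde u, \tilde S)$ close to $(u,S)$ satisfying the wave cone for $\tilde\xi$; the exclusion $k \notin \{-1/\lambda, 1/(1-\lambda)\}$ keeps both endpoint $\omega$-components nonzero, preserving type \eqref{Good Lambda condition 3}.

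The genuinely delicate case is \eqref{Good Lambda condition 4}, where $V_0 = 0$ and $V = (0,0,\omega)$ with $\omega \neq 0$: a generic perturbation $\tilde V_0 \in \mathscr{M}$ near $0$ has $\tilde\omega_0 \neq 0$ and, typically, $\tilde\omega_0 \wedge \omega \neq 0$, so the direction $V$ cannot be retained. Imitating the handling of this case in Proposition \ref{Proposition on existence of potentials} (where $\varphi$ and $\psi$ were made to oscillate in independent directions), one replaces the single bad $\Lambda$-segment through $0$ by a rank-two sequential lamination employing two good directions, each involving only one of the factors of $\omega = v \wedge w$. For $\tilde V_0$ sufficiently close to $0$, each step of this decomposition falls under one of the previously treated cases \eqref{Good Lambda condition 1}--\eqref{Good Lambda condition 3}, and the two-step construction reaches $\mathcal{N}_1$ and $\mathcal{N}_2$, placing $\tilde V_0 \in \U^{k',\Lambda_g} \subset \U^{lc,\Lambda_g}$ for some $k' > k$. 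The main obstacle is precisely the Pl\"ucker rigidity exhibited in this case: the quadratic constraint $\omega\wedge\omega = 0$ defining $\mathscr{M}$ forbids a single linear perturbation of the segment direction, so only an iterated lamination can circumvent the constraint $\tilde\omega_0\wedge\omega = 0$ that fails generically.
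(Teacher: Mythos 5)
Your proposal follows the paper's argument closely: same induction on $k$, same case split according to \eqref{Good Lambda condition 1}--\eqref{Good Lambda condition 4}, and in the first three cases essentially the same recipes (trivial for \eqref{Good Lambda condition 1}; retain $\xi$ and re-factor $\tilde\omega_0$ by Lemma \ref{Lemma about tubes} for \eqref{Good Lambda condition 2}; set $\tilde\omega = k\tilde\omega_0$ and perturb $\tilde\xi$, $\tilde u$, $\tilde S$ for \eqref{Good Lambda condition 3}). Incidentally, in case \eqref{Good Lambda condition 2} you can simply keep $(\tilde u,\tilde S) = (u,S)$, since $\xi$ is unchanged.

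Two remarks about case \eqref{Good Lambda condition 4} are in order. First, a misreading: \eqref{Good Lambda condition 4} reads $u = S = \omega_0 = 0$, where $u, S$ are components of the \emph{direction} $V$ and $\omega_0$ is the $\omega$-component of $V_0$; it does \emph{not} force $V_0 = 0$, and the fluid part $(u_0, S_0)$ of $V_0$ may be arbitrary (inconsequential, since the fluid part decouples, but it is a misstatement). Second, and more substantively, the claimed replacement lamination ``two good directions, each involving only one of the factors of $\omega = v \wedge w$'' does not work literally: for a generic small $\tilde\omega_0 = \tilde v_0 \wedge \tilde w_0 \neq 0$, a step from $\tilde V_0$ in the direction $(0,0,v\wedge\beta)$ stays inside $\mathscr{M}$ only if $\tilde v_0 \wedge \tilde w_0 \wedge v \wedge \beta = 0$, which fails for $\beta$ near $w$ precisely because $\tilde\omega_0 \wedge \omega \neq 0$ generically. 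The paper's actual construction first shifts $\tilde V_0$ by $W = (0,0,\tilde v_0 \wedge \tilde w)$ --- one factor drawn from $\tilde\omega_0$, one close to a factor of $\omega$ --- and then laminates along $\tilde V = (0,0,\tilde\xi \wedge (\tilde w + \tilde w_0))$, with both sub-segments good via \eqref{Good Lambda condition 2}. Your high-level observation that Pl\"{u}cker rigidity forces an iterated (rank-two) lamination is exactly right, but the bivectors that keep both sub-steps inside $\mathscr{M}$ mix the factors of $\tilde\omega_0$ with $\xi$, $w$, rather than splitting $\omega$'s own factors; working out this specific bivector algebra is the one genuinely non-trivial part of the proof that your sketch leaves open.
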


Before beginning the proof of Proposition \ref{Lemma on laminates of relatively open sets} we describe the main difficulty. The proof proceeds by induction. Suppose $V_0 - (1-\lambda) V, V_0 + (1-\lambda) V \in \U^{k,\Lambda_g}$, $[V_0 - (1-\lambda) V, V_0 + \lambda V] \subset \mathscr{M}$ is a $\Lambda_g$-segment and $B_{\mathscr{M}}(V_0+ \lambda V,\delta) \cup B_{\mathscr{M}}(V_0 - (1-\lambda) V, \delta) \subset \U^{lc,\Lambda_g}$. Given $\tilde{V}_0 \in \mathscr{M}$ with $|V_0 - \tilde{V}_0|$ small our aim is to get $\tilde{V}_0 \in \U^{lc,\Lambda_g}$. It is tempting to write $\tilde{V}_0 = \lambda [\tilde{V}_0 - (1-\lambda) V] + (1-\lambda) [\tilde{V}_0 + \lambda V]$.

It is, however, not guaranteed that the endpoints $\tilde{V}_0 + \lambda V, \tilde{V}_0 - (1-\lambda) V$ lie on the nonlinear manifold $\mathscr{M}$! Therefore, we need to perturb $\tilde{V}_0 + \lambda V$ and $\tilde{V}_0 - (1-\lambda) V$ in order to place an entire $\Lambda_g$-segment on $\mathscr{M}$. This is in stark contrast to equations of fluid dynamics where the lamination convex hull has non-empty interior. Again, the two-form formalism comes to the rescue. 

We overcome the difficulties via the following lemma which allows us to choose the factors $v,w \in \R^4$ of a simple two-form $v \wedge w$ in a continuous way. Henceforth, we denote $\norm{\omega} \defeq \max_{\abs{f} = \abs{g} = 1} \omega(f,g)$ for every $\omega \in \Lambda^2(\R^4)$.

\begin{lem} \label{Lemma about tubes}
Suppose $v_1,w_1,v_2,w_2 \in S^3$ with $v_1 \cdot w_1 = v_2 \cdot w_2 = 0$, and let $0 < \varepsilon < 1$. If $\norm{v_1 \wedge w_1 - v_2 \wedge w_2} < \varepsilon$, then there exist orthogonal $\tilde{v}_2,\tilde{w}_2 \in S^3$ such that
\[\tilde{v}_2 \wedge \tilde{w_2} = v_2 \wedge w_2, \; \abs{v_1-\tilde{v}_2} < \sqrt{2} \varepsilon \; \text{ and } \; |w_1-\tilde{w}_2| < \sqrt{2} \varepsilon.\]
\end{lem}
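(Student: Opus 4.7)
The plan is to reduce the problem to a $2 \times 2$ matrix question and exploit the polar decomposition. Introduce the Gram-type matrix
\[
M = \begin{pmatrix} v_1 \cdot v_2 & v_1 \cdot w_2 \\ w_1 \cdot v_2 & w_1 \cdot w_2 \end{pmatrix},
\]
which represents the orthogonal projection $\spann\{v_2,w_2\} \to \spann\{v_1,w_1\}$ in the given orthonormal bases. A direct computation gives $\det M = \langle v_1 \wedge w_1, v_2 \wedge w_2 \rangle$, the standard Euclidean inner product on $\Lambda^2(\R^4)$, so $|v_1 \wedge w_1 - v_2 \wedge w_2|_{\Lambda^2}^2 = 2 - 2\det M$.

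Next I convert the hypothesis on the operator norm into a sharp lower bound on $\det M$. For any $\eta \in \Lambda^2(\R^4)$ the associated $4 \times 4$ skew-symmetric matrix has singular values occurring in pairs $\sigma_1,\sigma_1,\sigma_2,\sigma_2$, so $|\eta|_{\Lambda^2}^2 = \sigma_1^2 + \sigma_2^2 \leq 2\sigma_1^2 = 2\norm{\eta}^2$. Applied to $\eta = v_1 \wedge w_1 - v_2 \wedge w_2$, this yields $2 - 2\det M < 2\varepsilon^2$, i.e.\ $\det M > 1 - \varepsilon^2$. Since $M$ is a contraction its singular values satisfy $\sigma_1 \leq 1$, and combined with $\sigma_1\sigma_2 = \det M > 1-\varepsilon^2$ this forces $\sigma_2 \geq \det M > 1 - \varepsilon^2 > 0$; in particular $\det M > 0$.

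The crux is the right polar decomposition $M = R_0 S$, with $R_0 \in SO(2)$ (legitimate because $\det M > 0$) and $S$ symmetric positive definite having eigenvalues $\sigma_1, \sigma_2$. Setting $R \defeq R_0^T \in SO(2)$, I define the rotated orthonormal frame of $\spann\{v_2,w_2\}$ by
\[
\tilde{v}_2 \defeq R_{11} v_2 + R_{21} w_2, \qquad \tilde{w}_2 \defeq R_{12} v_2 + R_{22} w_2,
\]
which is orthonormal and satisfies $\tilde v_2 \wedge \tilde w_2 = \det(R)\, v_2 \wedge w_2 = v_2 \wedge w_2$. The projection matrix in the new basis is $\tilde M \defeq M R = R_0 S R_0^T$, symmetric with eigenvalues $\sigma_1, \sigma_2$, so $\tilde M_{ii} \geq \sigma_2 > 1 - \varepsilon^2$ by the Rayleigh-quotient bound. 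Since $\tilde M_{11} = v_1 \cdot \tilde v_2$ and $\tilde M_{22} = w_1 \cdot \tilde w_2$, this gives
\[
|v_1 - \tilde v_2|^2 = 2 - 2\tilde M_{11} < 2\varepsilon^2, \qquad |w_1 - \tilde w_2|^2 < 2\varepsilon^2.
\]

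The main delicate point is the sharp constant $\sqrt 2$. The naive bound $\omega(v_1,w_1) = 1 - \det M \leq \norm{\omega}$ alone only yields $\det M > 1 - \varepsilon$ and ultimately produces an estimate of order $\sqrt{\varepsilon}$ rather than $\varepsilon$; the inequality $|\eta|_{\Lambda^2} \leq \sqrt 2 \norm{\eta}$, which relies on the pairing of singular values of skew $4 \times 4$ matrices, is essential. Equally important is forcing $\tilde M$ to be symmetric via the polar decomposition: a direct minimization of $|v_1 - \tilde v_2|^2 + |w_1 - \tilde w_2|^2$ over a single rotation angle only guarantees a sum of order $3\varepsilon^2$, which cannot in general be split evenly into two terms of order $2\varepsilon^2$ each.
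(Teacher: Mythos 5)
Your proof is correct, and it takes a genuinely different route from the paper's. The paper builds $\tilde v_2$ explicitly as the normalised orthogonal projection of $v_1$ onto $\operatorname{span}\{v_2,w_2\}$ and takes $\tilde w_2$ to be its in-plane rotation, which fixes the gauge by forcing $\tilde w_2\cdot v_1=0$; it then extracts $1-(\tilde v_2\cdot v_1)^2<\varepsilon^2$ and $1-(\tilde w_2\cdot w_1)^2<\varepsilon^2$ by testing the bivector $v_1\wedge w_1-\tilde v_2\wedge\tilde w_2$ against three carefully chosen pairs of unit vectors, with a separate evaluation to pin down the sign of $\tilde w_2\cdot w_1$. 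Your version instead encodes everything in the $2\times2$ projection matrix $M$, fixes the gauge by the polar decomposition $M=R_0S$ so that after rotating $(v_2,w_2)$ by $R_0^T$ the matrix $\tilde M=R_0SR_0^T$ is symmetric, and then the Rayleigh bound $\tilde M_{ii}\ge\sigma_2$ hands you both legs of the estimate at once. The sharp constant $\sqrt2$ enters through the inequality $|\eta|_{\Lambda^2}\le\sqrt2\,\norm{\eta}$ on $\Lambda^2(\R^4)$, justified via the paired singular values $\sigma_1,\sigma_1,\sigma_2,\sigma_2$ of the associated skew-symmetric matrix, together with $\det M=\langle v_1\wedge w_1,v_2\wedge w_2\rangle$. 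The two constructions produce different $\tilde v_2,\tilde w_2$ in general (the paper's $\tilde M$ is lower triangular, yours is symmetric), but both achieve the same $\sqrt2\varepsilon$ bound. What the paper's route buys is elementary self-containedness (no polar decomposition, no singular-value facts); what yours buys is symmetry between the two bounds, a one-line explanation of where the factor $\sqrt2$ comes from, and a clear picture of the optimisation: the polar rotation is exactly the minimiser of $|v_1-\tilde v_2|^2+|w_1-\tilde w_2|^2$ over the $SO(2)$-gauge. One minor quibble with your closing remark: a generic rotation gives a sum bounded by $4\varepsilon^2$, not $3\varepsilon^2$, but the substantive point -- that without the symmetric normal form the sum need not split evenly -- stands.
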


\begin{proof}
First, if $v_2 \cdot v_1 = w_2 \cdot v_1 = 0$, then $(v_1 \wedge w_1 - v_2 \wedge w_2)(v_2,w_2) = -1$, which yields a contradiction.
Assume, therefore, that $v_2 \cdot v_1$ and $w_2 \cdot v_1$ are not both zero.

Denote by $\tilde{v}_2$ the normalised projection of $v_1$ onto $\text{span}\{v_2,w_2\}$ and by $\tilde{w}_2$ its rotation in $\text{span}\{v_2,w_2\}$, that is,
\[\tilde{v}_2 = \frac{(v_1 \cdot v_2) v_2 + (v_1 \cdot w_2) w_2}{\abs{(v_1 \cdot v_2) v_2 + v_1 \cdot w_2) w_2}}, \qquad\tilde{w}_2 = \frac{-(v_1 \cdot w_2) v_2 + (v_1 \cdot v_2) w_2}{\abs{(v_1 \cdot v_2 ) v_2 + (v_1 \cdot w_2) w_2}}.\]
Thus $\tilde{v}_2 \wedge \tilde{w}_2 = v_2 \wedge w_2$ and $\tilde{w}_2 \cdot v_1 = 0$. Now
\[(v_1 \wedge w_1 - \tilde{v}_2 \wedge \tilde{w}_2) \left( \frac{\tilde{v}_2 - (\tilde{v}_2 \cdot v_1) v_1}{\abs{\tilde{v}_2 - (\tilde{v}_2 \cdot v_1) v_1}},\tilde{w}_2 \right) = - \frac{1 - (\tilde{v}_2 \cdot v_1)^2}{\sqrt{1 - (\tilde{v}_2 \cdot v_1)^2}} = - \sqrt{1 - (\tilde{v}_2 \cdot v_1)^2}.\]
Thus $\sqrt{1 - (\tilde{v}_2 \cdot v_1)^2} \le \norm{v_1 \wedge w_1 - \tilde{v}_2 \wedge \tilde{w}_2} < \varepsilon$. Since clearly $\tilde{v}_2 \cdot v_1 \ge 0$, we conclude that $\tilde{v}_2 \cdot v_1 > \sqrt{1 - \varepsilon^2}$. Hence, $\abs{v_1-\tilde{v}_2}^2 < 2 - 2 \sqrt{1-\varepsilon^2} < 2 \varepsilon^2$.

We then show that $|w_1-\tilde{w}_2| < \sqrt{2} \varepsilon$. First,
\[(v_1 \wedge w_1 - \tilde{v}_2 \wedge \tilde{w}_2) \left( v_1, \frac{w_1 - (\tilde{w}_2 \cdot w_1)\tilde{w}_2}{|w_1 - (\tilde{w}_2 \cdot w_1) \tilde{w}_2|} \right)
= \sqrt{1 - (\tilde{w}_2 \cdot w_1)^2}\]
gives $\sqrt{1 - (\tilde{w}_2 \cdot w_1)^2} < \varepsilon$. Next,
\[(v_1 \wedge w_1 - \tilde{v}_2 \wedge \tilde{w}_2) (v_1, w_1) = 1 - (v_1 \cdot \tilde{v}_2) (\tilde{w}_2 \cdot w_1) < \varepsilon\]
implies that $\tilde{w}_2 \cdot w_1 > 0$. As above, we conclude that $|w_1 - \tilde{w}_2|^2 < 2 \varepsilon^2$.
\end{proof}

We also need a lemma which gives a solution of a matrix equation with a natural norm estimate.

\begin{lem} \label{Lemma on good solutions of matrix equations}
If $x \in \R^3 \setminus \{0\}$ and $y \in \R^3$, then
\[S \defeq \frac{x \otimes y + y \otimes x - (x \cdot y) I}{\abs{x}^2} \in \R^{3 \times 3}_{\operatorname{sym}}\]
satisfies $S x = y$ and $\abs{S} \le 3 \abs{y}/\abs{x}$.
\end{lem}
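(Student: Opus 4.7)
The statement is a direct linear-algebraic computation, so the plan is simply to verify the three assertions in order. Symmetry of $S$ is immediate from the definition, since $x\otimes y+y\otimes x$ is a symmetric rank-$\le 2$ matrix and $(x\cdot y)I$ is a multiple of the identity; dividing by the scalar $|x|^2$ preserves symmetry.

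Next, to check that $Sx=y$, I would use the identity $(a\otimes b)c=(b\cdot c)a$ valid for every $a,b,c\in\R^3$. Applying this twice gives
\[
(x\otimes y)x+(y\otimes x)x-(x\cdot y)Ix=(y\cdot x)x+(x\cdot x)y-(x\cdot y)x=|x|^2\,y,
\]
so after dividing by $|x|^2$ we obtain $Sx=y$ as required.

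Finally, for the operator-norm bound, I would apply the triangle inequality to the three summands in the numerator. For any unit vector $v\in\R^3$, the identity above together with Cauchy--Schwarz yields $|(x\otimes y)v|=|x||y\cdot v|\le|x||y|$, and similarly $|(y\otimes x)v|\le|x||y|$, while $|(x\cdot y)Iv|=|x\cdot y|\le|x||y|$. Summing gives
\[
|Sv|\le\frac{|x||y|+|x||y|+|x||y|}{|x|^2}=\frac{3|y|}{|x|},
\]
and taking the supremum over unit $v$ produces $|S|\le 3|y|/|x|$.

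There is no genuine obstacle here; the only point that requires a moment of care is the choice of matrix norm. Since the estimate $|a\otimes b|\le|a||b|$ holds in the operator norm (with equality), and the factor of $3$ allows some slack, the bound also transfers to the Frobenius norm via $|S|_F\le\sqrt{3}\,|S|_{\mathrm{op}}$ if needed, so the claim is robust.
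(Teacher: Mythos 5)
Your proof is correct and is the obvious direct verification; the paper in fact states Lemma \ref{Lemma on good solutions of matrix equations} without proof, so there is no competing argument to compare against. Your checks of symmetry, of $Sx = y$ via $(a\otimes b)c = (b\cdot c)a$, and of the operator-norm bound by the triangle inequality are all exactly what one would write. One small caveat on your closing remark: the inequality $\abs{S}_F \le \sqrt{3}\,\abs{S}_{\mathrm{op}}$ combined with your bound $\abs{S}_{\mathrm{op}} \le 3\abs{y}/\abs{x}$ only yields $\abs{S}_F \le 3\sqrt{3}\,\abs{y}/\abs{x}$, which is \emph{not} the stated constant, so that transfer does not by itself establish the lemma in the Frobenius norm. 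The Frobenius bound does hold --- decomposing $y = \alpha x/\abs{x} + y_\perp$ and diagonalising $S$ in a basis adapted to $x, y_\perp$ shows $\abs{S}_F \le \sqrt{3}\,\abs{y}/\abs{x}$ and indeed $\abs{S}_{\mathrm{op}} = \abs{y}/\abs{x}$ --- but that requires the direct computation rather than norm equivalence. Since the lemma is only invoked in the paper inside a $\lesssim$ estimate where the constant is irrelevant, this does not affect anything downstream.
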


\begin{proof}[Proof of Proposition \ref{Lemma on laminates of relatively open sets}]
We need to show for every $k \in \N_0$ that if $V_0 \in \U^{k,\Lambda_g}$, then there exists $\delta > 0$ such that $B_{\mathscr{M}}(V_0,\delta) \subset \U^{lc,\Lambda_g}$. The claim holds for $k = 0$ by assumption, so assume, by induction, it holds for $k$.

Let $V_0 \in \U^{k+1,\Lambda_g}$. Write $V_0 = \lambda (V_0 - (1-\lambda) V) + (1-\lambda) (V_0 + \lambda V)$, where $[V_0 - (1-\lambda) V, V_0 + \lambda V]$ is a $\Lambda_g$-segment. By assumption, there exists $\delta > 0$ such that
\[B_{\mathscr{M}}(V_0-(1-\lambda)V,\delta) \cup B_{\mathscr{M}}(V_0 + \lambda V,\delta) \subset \U^{lc,\Lambda_g}.\]
We intend to show that whenever $\tilde{\delta} = \tilde{\delta}_{V_0,V,\lambda} > 0$ is small enough, $B_{\mathscr{M}}(V_0,\tilde{\delta}) \subset \U^{lc,\Lambda_g}$. The case \eqref{Good Lambda condition 1} is clear.

\vspace{0.3cm}
Suppose first \eqref{Good Lambda condition 2} holds, that is, $\omega \wedge \xi = 0$ but $\omega_0 \wedge \xi \neq 0$. By Proposition \ref{Equivalent conditions for good Lambda segments} and scaling, we may write $V_0 = (u_0,S_0, \norm{\omega_0} v \wedge w_0)$ and $V = (u,S, v \wedge \xi)$, where $\abs{v} = \abs{w_0} = 1$ and $v \cdot w_0 = 0$.

Let now $\tilde{V}_0 = (\tilde{u}_0, \tilde{S}_0, \tilde{\omega}_0) \in \mathscr{M}$ and $\|\tilde{V}_0-V_0\| < \tilde{\delta}$. By Lemma \ref{Lemma about tubes}, we may write $\tilde{\omega}_0/\|\tilde{\omega}_0\| = \tilde{v} \wedge \tilde{w}_0$, where $|\tilde{v}| = |\tilde{w}_0| = 1$, $\tilde{v} \cdot \tilde{w}_0 = 0$ and $|\tilde{v}-v| + |\tilde{w}_0 - w_0| \lesssim_{V_0} \tilde{\delta}$. In the last estimate we used the inequality $\|\omega_0/\norm{\omega_0} - \tilde{\omega}_0/\|\tilde{\omega}_0\|\| \le 2 \|\omega_0-\tilde{\omega}_0\|/\norm{\omega_0}$.

Now choose $\tilde{V} = (u,S, \tilde{v} \wedge \xi) \in \Lambda$. As long as $\tilde{\delta} > 0$ is small enough, it is ensured that $\tilde{v} \wedge \tilde{w}_0 \wedge \xi \neq 0$, so that $[\tilde{V}_0 - (1-\lambda) \tilde{V}, \tilde{V}_0 + \lambda \tilde{V}]$ satisfies \eqref{Good Lambda condition 2}. Thus $\tilde{V}_0 = \lambda (\tilde{V}_0 - (1-\lambda) \tilde{V}) + (1-\lambda) (\tilde{V}_0 + \lambda \tilde{V}) \in \U^{lc,\Lambda_g}$, as claimed.

\vspace{0.3cm}
Suppose next \eqref{Good Lambda condition 3} holds, so that $\omega = k \omega_0 \neq 0$ with $k \notin \{-1-\lambda,1/(1-\lambda)\}$. Write $\omega_0 = \norm{\omega_0} v_0 \wedge \xi \neq 0$, where $\abs{v_0} = \abs{\xi} = 1$ and $v_0 \cdot \xi = 0$. Again, let $\tilde{V}_0 = (\tilde{u}_0, \tilde{S}_0, \tilde{\omega}_0) \in \mathscr{M}$ and $\|\tilde{V}_0-V_0\| < \tilde{\delta}$. This time, we may write $\tilde{\omega}_0 = \|\tilde{\omega}_0\| \tilde{v}_0 \wedge \tilde{\xi}$, where $|\tilde{v}_0| = |\tilde{\xi}| = 1$, $\tilde{v}_0 \cdot \tilde{\xi} = 0$ and $|\tilde{v}_0-v_0| + |\tilde{\xi}-\xi| \lesssim_{V_0} \tilde{\delta}$.

Our aim is to choose $\tilde{u} \approx u$ and $\tilde{S} \approx S$ such that $\tilde{V} = (\tilde{u}, \tilde{S}, k \tilde{\omega}_0)$ satisfies $\tilde{V} \tilde{\xi} = 0$. We select
\[\tilde{u} \defeq u - \frac{u \cdot \tilde{\xi}_x}{|\tilde{\xi}_x|^2} \tilde{\xi}_x\]
so that $\tilde{u} \cdot \tilde{\xi}_x = 0$ and $|\tilde{u}-u| \lesssim_{V_0,V,\lambda} \tilde{\delta}$ as soon as, say, $\delta < \abs{\xi_x}/2$. We then use Lemma \ref{Lemma on good solutions of matrix equations} to choose $\tilde{S} \in \mathbb{R}^{3 \times 3}_{\text{sym} }$ satisfying
\[\tilde{S} \tilde{\xi}_x + \tilde{\xi}_t \tilde{u}
= (\tilde{S}-S) \tilde{\xi}_x + S (\tilde{\xi}_x-\xi_x) + \tilde{\xi}_t (\tilde{u}-u) + (\tilde{\xi}_t - \xi_t) u = 0\]
with $|\tilde{S}-S| \lesssim_{V_0,V,\lambda} \tilde{\delta}$. Now $[\tilde{V}_0 - (1-\lambda) \tilde{V}, \tilde{V}_0 + \lambda \tilde{V}]$ satisfies \eqref{Good Lambda condition 3} and the endpoints belong to $\U^{lc,\Lambda_g}$. We conclude that $\tilde{V}_0 \in \U^{lc,\Lambda_g}$.

\vspace{0.3cm}
Last suppose $u = S = \omega_0 = 0 \neq \omega$. Let $\tilde{V}_0 = (\tilde{u}_0, \tilde{S}_0, \tilde{v}_0 \wedge \tilde{w}_0) \in \mathscr{M}$ with $|\tilde{V}_0-V_0| < \tilde{\delta}$. Suppose first $\tilde{v}_0 \wedge \tilde{w}_0 = 0$. Then $\tilde{V}_0 \in [\tilde{V}_0 - (1-\lambda) V, \tilde{V}_0 + \lambda V]$, the $\Lambda$-segment satisfies \eqref{Good Lambda condition 4} and the endpoints belong to $\U^{lc,\Lambda_g}$, so that $\tilde{V}_0 \in \U^{lc,\Lambda_g}$.

Suppose then $\tilde{v}_0 \wedge \tilde{w}_0 \neq 0$. We write $V = (0,0,\xi \wedge w)$ and choose
\[\tilde{V} = (0,0,\tilde{\xi} \wedge (\tilde{w} + \tilde{w}_0)) \in \Lambda,\]
where $\tilde{\xi}_x \neq 0$, $|\tilde{\xi} - \xi| + |\tilde{w} - w| < \tilde{\delta}$ and furthermore $\tilde{v}_0 \wedge \tilde{w}_0 \wedge \tilde{\xi} \neq 0$ and $\tilde{v}_0 \wedge \tilde{w}_0 \wedge \tilde{w} \neq 0$. Thus
 \[\tilde{V}_0 + (0,0,\tilde{v}_0 \wedge \tilde{w}) + \lambda \tilde{V} = (\tilde{u}_0, \tilde{S}_0, (\tilde{v}_0 + \lambda \xi) \wedge (\tilde{w}_0 + \tilde{w})) \in \U^{lc,\Lambda_g}\]
and $\tilde{V}_0 + (0,0,\tilde{v}_0 \wedge \tilde{w}) - (1-\lambda) \tilde{V} \in \U^{lc,\Lambda_g}$. Now $\tilde{V}_0 + (0,0, \tilde{v}_0 \wedge \tilde{w}) \in \U^{lc,\Lambda_g}$; the $\Lambda$-segment is good because $\tilde{V} \tilde{\xi} = 0$ but $\tilde{v}_0 \wedge \tilde{w}_0 \wedge \tilde{\xi} \neq 0$.
 
An entirely similar argument gives $\tilde{V}_0 - (0,0,\tilde{v}_0 \wedge \tilde{w}) \in \U^{lc,\Lambda_g}$. Now $[\tilde{V}_0 - (0,0,\tilde{v}_0 \wedge \tilde{w}), \tilde{V}_0 + (0,0,\tilde{v}_0 \wedge \tilde{w})]$ is a $\Lambda_g$-segment because we assumed that $\tilde{v}_0 \wedge \tilde{w}_0 \wedge \tilde{w} \neq 0$. Thus $\tilde{V}_0 \in \U^{lc,\Lambda_g}$, as claimed.
\end{proof}

\subsection{Equivalence of hulls of relatively open sets} \label{Equivalence of hulls of relatively open sets}

\begin{prop} \label{Theorem on hulls of relatively open sets}
Suppose $\U$ is bounded and relatively open in $\mathscr{M}$. Then $\U^{lc,\Lambda} = \U^{lc,\Lambda_g}$.
\end{prop}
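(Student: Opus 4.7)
The inclusion $\U^{lc,\Lambda_g} \subset \U^{lc,\Lambda}$ is immediate since every $\Lambda_g$-segment is in particular a $\Lambda$-segment. The heart of the matter is the reverse containment, which I would prove by induction on $k \geq 0$, showing $\U^{k,\Lambda} \subset \U^{lc,\Lambda_g}$; the base case $k = 0$ is trivial.

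For the inductive step, I take $V_0 \in \U^{k,\Lambda}$ and assume the nontrivial case $V_0 = \lambda V_1 + (1-\lambda) V_2$ with $\lambda \in (0,1)$, $V \defeq V_1 - V_2 \in \Lambda$, $[V_1,V_2] \subset \mathscr{M}$, and $V_1, V_2 \in \U^{k-1,\Lambda}$. By the inductive hypothesis together with Proposition \ref{Lemma on laminates of relatively open sets} (which provides relative openness of $\U^{lc,\Lambda_g}$ in $\mathscr{M}$), there exists $\delta > 0$ such that $B_\mathscr{M}(V_j,\delta) \subset \U^{lc,\Lambda_g}$ for $j = 1,2$. If $[V_1,V_2]$ is already a $\Lambda_g$-segment we are done. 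Otherwise, the plan is to find $\tilde V_j \in B_\mathscr{M}(V_j,\delta)$ with $V_0 = \lambda \tilde V_1 + (1-\lambda)\tilde V_2$, $\tilde V_1 - \tilde V_2 \in \Lambda$, and $[\tilde V_1,\tilde V_2]$ a $\Lambda_g$-segment; this places $V_0 \in \U^{lc,\Lambda_g}$ automatically by the choice of $\delta$.

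Using the structural results in Propositions \ref{Characterisation of Lambda segments}--\ref{Equivalent conditions for good Lambda segments}, the bad configurations reduce essentially to two: (I) $\omega_0 \neq 0$ and $\omega = k\omega_0$ with the forbidden ratio $k \in \{-1/\lambda,\,1/(1-\lambda)\}$, so that one endpoint sits on the degenerate stratum $\{\omega = 0\}$; and (II) $\omega_0 = 0$ with $\omega \neq 0$ and $(u_0,S_0) \neq (0,0)$, so that the midpoint itself sits on that stratum. In case (I), writing $\omega_0 = v_0 \wedge w_0$, I add a small perturbation $(0,0,\eta)$ with $\eta$ a simple bivector sharing a factor with $\omega_0$ (so that $\omega_0 \wedge \eta = 0$ and both endpoints remain in $\mathscr{M}$ after symmetric compensation), chosen via Lemma \ref{Lemma about tubes} to depend continuously on $\omega_0$; a matching fluid correction obtained from Lemma \ref{Lemma on good solutions of matrix equations} restores the wave-cone condition \eqref{Wave cone condition 2} for $\tilde V_1 - \tilde V_2$, and the bivector ratio is shifted off the forbidden set, placing the new segment in case \eqref{Good Lambda condition 3}. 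Case (II) is reduced to case (I) in two stages: first split $V_0$ along a direction of type \eqref{Good Lambda condition 1} (vanishing bivector component, hence trivially $\Lambda_g$ and easily realised via Lemma \ref{l:fluidpotential} on the fluid side) to reach two new midpoints whose bivector parts are nonzero, then handle each via case (I).

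The main technical obstacle is executing the perturbation argument consistently: the perturbed endpoints must simultaneously lie on the nonlinear manifold $\mathscr{M}$, the difference $\tilde V_1 - \tilde V_2$ must satisfy the linear wave-cone conditions \eqref{Wave cone condition 1}--\eqref{Wave cone condition 3}, and the new segment must satisfy one of the good conditions \eqref{Good Lambda condition 1}--\eqref{Good Lambda condition 4}. Coupling these with the quadratic constraint $\omega \wedge \omega = 0$ defining $\mathscr{M}$ is exactly what makes the bivector formalism developed in Section \ref{Maxwell two-forms} essential, and the combination of Proposition \ref{Characterisation of Lambda segments} (structure of $\Lambda$-segments), Lemma \ref{Lemma about tubes} (continuous factor selection for simple bivectors), and Lemma \ref{Lemma on good solutions of matrix equations} (norm-controlled solution of $S\xi_x = -\xi_t u$) is what ultimately allows all three conditions to be met by a single perturbation of size $o(\delta)$.
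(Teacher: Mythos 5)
The broad scaffolding is correct and matches the paper: the inclusion $\U^{lc,\Lambda_g}\subset\U^{lc,\Lambda}$ is trivial, and for the converse you induct on $k$, use Proposition \ref{Lemma on laminates of relatively open sets} to get relative openness of $\U^{lc,\Lambda_g}$ around the endpoints, and then perturb a bad $\Lambda$-segment into a good configuration. However, your case analysis has a genuine gap and, as a result, the perturbation argument does not close.

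The decisive omission is the case $\omega_0\neq 0$ with $\omega_0$ and $\omega$ \emph{not} parallel. You assert that the bad configurations reduce to the two cases (I) and (II), but this is false. By Propositions \ref{Equivalent conditions for wave cone condition} and \ref{Equivalent conditions for good Lambda segments}, a bad segment with $\omega_0\neq 0$ and $\omega$ not a multiple of $\omega_0$ must have $\omega_0=v_0\wedge\xi$ and $\omega=v\wedge\xi$ sharing the factor $\xi$. Such a segment can be bad precisely because the fluid constraints $\xi_x'\cdot u=0$ and $\xi_t' u + S\xi_x'=0$ may force every admissible $\xi'$ to be a multiple of $\xi$, so that $\omega_0\wedge\xi'=0$ for all valid $\xi'$ and \eqref{Good Lambda condition 2} fails irreparably; at the same time $\omega$ is not a multiple of $\omega_0$ so \eqref{Good Lambda condition 3} is unavailable. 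The paper treats exactly this situation as its first and main case, using the perpendicular translation $W=(0,0,\varepsilon\, w_0\wedge w)$ followed by a split along $[V_0-W,V_0+W]$. Your dichotomy (I)/(II) cannot absorb it: your case~(I) mechanism is to nudge the ratio $k$ to an allowed value, which presupposes the two bivectors are parallel to begin with, and is a large perturbation when they are not.

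There is also a structural mismatch in the perturbation scheme. You keep $V_0$ fixed as the $\lambda$-convex combination of the perturbed endpoints $\tilde V_1,\tilde V_2$; this forces the two endpoint corrections to be anti-proportional, which is a rigid constraint once you also insist both $\tilde V_1,\tilde V_2$ and the difference $\tilde V_1-\tilde V_2$ lie on the quadric $\omega\wedge\omega=0$. The paper instead translates the \emph{entire} segment by $\pm W$ (moving the midpoint to $V_0\pm W$), shows both translates are good, and then recombines along $[V_0-W,V_0+W]$; the extra step buys the freedom to choose $W$ transverse to the span of the existing factors, which is what makes the translated segment satisfy \eqref{Good Lambda condition 2} and the final recombination satisfy \eqref{Good Lambda condition 3} or \eqref{Good Lambda condition 4}. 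Smaller issues: in case~(II) the splitting direction should be of type \eqref{Good Lambda condition 4} rather than \eqref{Good Lambda condition 1} (the latter has zero bivector component and cannot move $\omega_0$ off zero), and after that split the resulting segments fall into the non-parallel case you omitted, not your case~(I); the good condition \eqref{Good Lambda condition 4} constrains the direction $(u,S)$, not the midpoint $(u_0,S_0)$; and Lemma \ref{Lemma about tubes} is not needed here, it belongs to the proof of Proposition \ref{Lemma on laminates of relatively open sets}.
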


\begin{proof}
The direction $\U^{lc,\Lambda_g} \subset \U^{lc,\Lambda}$ is obvious. We prove the converse direction by induction, first assuming that $(u,S,0) \notin \U^{lc,\Lambda}$ for each $u$ and $S$. Clearly $\U \subset \U^{lc,\Lambda_g}$. Assume, therefore, that $\U^{k,\Lambda} \subset \U^{lc,\Lambda_g}$; our aim is to show that $\U^{k+1,\Lambda} \subset \U^{lc,\Lambda_g}$.

Suppose $[V_0-(1-\lambda)V,V_0 + \lambda V] \subset \mathscr{M}$ is a bad $\Lambda$-segment and the endpoints $V_0-(1-\lambda)V,V_0 + \lambda V \in \U^{k,\Lambda} \subset \U^{lc,\Lambda_g}$. Assume first that $\omega_0 \neq 0$ and that $\omega_0$ and $\omega$ are not parallel. Thus $\omega_0 - (1-\lambda) \omega, \omega_0 + \lambda \omega \neq 0$. Now $\omega_0 = \xi \wedge w_0$ and $\omega = \xi \wedge w$ by Propositions \ref{Equivalent conditions for wave cone condition} and \ref{Equivalent conditions for good Lambda segments}. Choose $\tilde\omega \defeq \varepsilon w_0 \wedge w$ and $W = (0,0,\tilde\omega)$, where $\varepsilon \neq 0$ is small. Now, since $\omega_0 \wedge \tilde\omega = \omega \wedge \tilde\omega = 0$, we have $V_0 + W + \lambda V, V_0+W-(1-\lambda) V \in \mathscr{M}$. Proposition \ref{Lemma on laminates of relatively open sets} then gives $V_0 + W + \lambda V, V_0+W-(1-\lambda) V \in \U^{lc,\Lambda_g}$. Furthermore, $[V_0+W-(1-\lambda)V,V_0+W+\lambda V]$ is a $\Lambda_g$-segment because $\omega \wedge \xi = 0$ but $(\omega_0+\varepsilon w_0 \wedge w) \wedge \xi \neq 0$. Therefore $V_0 + W \in \U^{lc,\Lambda_g}$. Similarly, $V_0-W \in \U^{lc,\Lambda_g}$. Finally, $[V_0-W,V_0+W]$ is a $\Lambda_g$-segment because $\tilde\omega \wedge w = 0$ and yet $\omega_0 \wedge w \neq 0$. Consequently, $V_0 \in \U^{lc,\Lambda_g}$.

Assume next that $\omega_0 = 0$. Since $[V_0-(1-\lambda)V,V_0 + \lambda V] \subset \mathscr{M}$ is a bad $\Lambda$-segment, we have $\omega = \xi \wedge w \neq 0$. We may assume that $w_x \neq 0$ (by possibly adding a constant multiple of $\xi_x \neq 0$ to $w_x$). This time select a basis $\{\xi,w,f,g\}$ of $\R^4$ with $f_x \neq 0$ and $w_x \cdot f_x = 0$. Select $W = (0,0,\varepsilon \, w \wedge f)$ with $\varepsilon \neq 0$ small. Arguing as in the previous paragraph, $V_0 \pm W + \lambda V, V_0 \pm W - (1-\lambda) V \in \U^{lc,\Lambda_g}$. As above, $V_0 \pm W \in \U^{lc,\Lambda_g}$ since $(\omega_0 \pm w \wedge f) \wedge \xi \neq 0$. Now $[V_0-W,V_0+W]$ (with $\lambda = 1/2$) satisfies \eqref{Good Lambda condition 4}; thus $V_0 \in \U^{lc,\Lambda_g}$.

Finally assume $\omega_0 \neq 0$ and $\omega = k \omega_0$ for $k \in \{-1/\lambda,1/(1-\lambda)\}$. We may thus write $\omega_0 = v_0 \wedge \xi$. Choose $W = (0,0,v_0 \wedge w)$, where $v_0 \wedge w \wedge \xi \neq 0$; thus, after scaling $w$, $V_0 \pm W \in \U^{lc,\Lambda_g}$. Indeed, $V_0 + \lambda V \pm W \in \U^{lc,\Lambda_g}$ and $V_0 - \lambda V \pm W \in \U^{lc,\Lambda_g}$ by Proposition \ref{Lemma on laminates of relatively open sets}. The $\Lambda$-segment $[V_0 + \lambda V \pm W, V_0 - (1-\lambda) V \pm W]$ is good since $\omega \wedge\xi = 0$ but $(\omega_0 \pm v_0 \wedge w) \wedge \xi \neq 0$. Now the $\Lambda$-segment $[V_0-W,V_0+W]$ is good since $v_0 \wedge w \wedge w = 0$ but $v_0 \wedge \xi \wedge w \neq 0$. Thus, again, $V_0 \in \U^{lc,\Lambda_g}$.
\end{proof}

\subsection{Formulation of the characterisations}
Proposition \ref{Theorem on hulls of relatively open sets} allows us to use the whole wave cone $\Lambda$ in computations on hulls of relatively open sets. In order to exploit this, in Theorem \ref{Hull theorem} we characterise $\U_{r,s} \defeq \text{int}_{\mathscr{M}}(K_{r,s}^{lc,\Lambda})$ via different relatively open sets. Our main aim is twofold: first, $\U_{r,s} = \cup_{\tau \in [0,1)} (B_{\mathscr{M}}(K_{\tau r, \tau s},\varepsilon_\tau))^{lc,\Lambda_g}$ whenever the constants $\varepsilon_\tau > 0$ are small enough, and secondly, $0 \in \U_{r,s}$. We prove the first one via the (a priori) easier equality $\U_{r,s} = \cup_{\tau \in [0,1)} (B_{\mathscr{M}}(K_{\tau r, \tau s},\varepsilon_\tau))^{lc,\Lambda}$ and Proposition \ref{Theorem on hulls of relatively open sets}.

In order to prove both of our two aims in a unified manner, we introduce some further terminology. For every $u,B \in \R^3$ we denote
\[S_{u,B} \defeq u \otimes u - B \otimes B \in \R^{3 \times 3}_{sym}\]
and for every $c > 0$ we define relatively open sets
\begin{align*}
\V_{r,s,c}
\defeq &\{(u,S,B,E) \colon \abs{u+B} < r + c, \abs{u-B} < s + c, \abs{S - S_{u,B} - \Pi I} < c, \\ & \abs{\Pi} < rs + c, \abs{E - B \times u} < c, B \cdot E = 0\}.
\end{align*}
Note that given $c > 0$ we have $0 \in \V_{r,s,c}$ and $B_{\mathscr{M}}(K_{r,s},\tilde{c}) \subset \V_{r,s,c}$ for every small enough $\tilde{c} > 0$.

\begin{thm} \label{Hull theorem}
There exist constants $\varepsilon_\tau = \varepsilon_{\tau,r,s} > 0$ such that for any $\tau_0 \in (0,1)$,
\[\U_{r,s} = \bigcup_{\tau_0 < \tau < 1} \V_{\tau r,\tau s,\varepsilon_\tau}^{lc,\Lambda_g} = \bigcup_{\tau_0 < \tau < 1} (B_{\mathscr{M}}(K_{\tau r, \tau s},\varepsilon_\tau))^{lc,\Lambda_g} = \bigcup_{\tau_0 < \tau < 1} K_{\tau r, \tau s}^{lc,\Lambda}.\]
\end{thm}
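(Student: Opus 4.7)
The plan is to establish the four-way equality via the cycle of inclusions
\begin{equation*}
\bigcup_\tau K_{\tau r, \tau s}^{lc,\Lambda} \;\subset\; \U_{r,s} \;\subset\; \bigcup_\tau \V_{\tau r,\tau s,\varepsilon_\tau}^{lc,\Lambda_g} \;\subset\; \bigcup_\tau \bigl(B_{\mathscr{M}}(K_{\tau r, \tau s}, \varepsilon_\tau')\bigr)^{lc,\Lambda_g} \;\subset\; \bigcup_\tau K_{\tau r, \tau s}^{lc,\Lambda},
\end{equation*}
with the constants $\varepsilon_\tau,\varepsilon_\tau'$ reindexed as needed. Since the expansion lemma below yields the monotonicity $K_{\tau_1 r, \tau_1 s}^{lc,\Lambda} \subset K_{\tau_2 r, \tau_2 s}^{lc,\Lambda}$ for $\tau_1 \le \tau_2$, the union $\bigcup_{\tau_0<\tau<1}$ is independent of the choice $\tau_0\in(0,1)$. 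The linchpin is the \emph{expansion lemma}: for each $\tau\in(0,1)$ there is $\varepsilon_\tau>0$ such that $B_{\mathscr{M}}(K_{\tau r,\tau s},\varepsilon_\tau)\subset K_{r,s}^{lc,\Lambda}$. Granted this, Lemma \ref{Lemma on laminates of relatively open sets} shows that the $\Lambda_g$-hull of $B_{\mathscr{M}}(K_{\tau r,\tau s},\varepsilon_\tau)$ is relatively open in $\mathscr{M}$, and Proposition \ref{Theorem on hulls of relatively open sets} identifies it with the corresponding $\Lambda$-hull; hence it is a relatively open subset of $K_{r,s}^{lc,\Lambda}$ and so contained in $\U_{r,s}$. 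This supplies the last inclusion (and \emph{a fortiori} $K_{\tau r,\tau s}^{lc,\Lambda}\subset\U_{r,s}$).

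The expansion lemma itself is the technical heart. Fix $V_0=(u_0,S_0,B_0,E_0)\in K_{\tau r,\tau s}$ and construct $W\in\Lambda$ together with $\lambda\in(0,1)$ such that $V_0+\lambda W,\ V_0-(1-\lambda)W\in K_{r,s}$. Working in Els\"{a}sser variables $z^\pm=u\pm B$ (so $|z_0^+|=\tau r$, $|z_0^-|=\tau s$), a wave-cone perturbation along some $\xi_x\perp u_0,B_0$ is chosen to bring $|z^\pm|$ up to $r$ and $s$ on the two endpoints. The remaining condition \eqref{Wave cone condition 3} then pins down $\xi_t$ via \eqref{Wave cone condition 4} (with $\xi_x=B_0\times u_0$ in the generic case, and an ad hoc choice when $B_0\times u_0=0$), and Lemma \ref{Lemma on good solutions of matrix equations} produces a stress tensor $S$ satisfying \eqref{Wave cone condition 2}, the pressure being adjusted so that $|\Pi|\le rs$. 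Compactness of $K_{\tau r,\tau s}$ and continuity of this construction yield a uniform $\varepsilon_\tau>0$. Several sub-cases (notably $B_0\times u_0=0$, $\tau=0$, or $u_0,B_0$ parallel) must be handled separately, mirroring the case analysis of Section \ref{The answer to Question on existence of potentials}.

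For the opposite direction $\U_{r,s}\subset\bigcup_\tau\V_{\tau r,\tau s,\varepsilon_\tau}^{lc,\Lambda_g}$, I would first show that every $V\in\U_{r,s}$ satisfies the strict constraints of $\V_{r,s,0}$: the equalities $E=B\times u$ and $S=S_{u,B}+\Pi I$ follow from the fact that $|E-B\times u|^2$ and $\mathrm{dist}^2(S,\{S_{u,B}+\Pi I\})$ are $\Lambda$-convex on $\mathscr{M}$ and vanish on $K_{r,s}$ (reflecting the closure properties of the Maxwell system noted in Section \ref{s:system}), while the strict inequalities $|u\pm B|<r,s$ and $|\Pi|<rs$ follow from relative openness in $\mathscr{M}$. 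Choosing $\tau$ sufficiently close to $1$ then places $V$ in $\V_{\tau r,\tau s,\varepsilon_\tau}$, hence in its $\Lambda_g$-hull. The two middle inclusions are bookkeeping: $B_{\mathscr{M}}(K_{\tau r,\tau s},\varepsilon)\subset\V_{\tau r,\tau s,C\varepsilon}$ by the triangle inequality applied to each defining constraint of $K_{\tau r,\tau s}$, while projecting an element of $\V_{\tau r,\tau s,\varepsilon}$ onto its nearest point of $K_{\tau' r,\tau' s}$ (with $|\tau'-\tau|\lesssim\varepsilon$) gives the reverse embedding after reindexing. Proposition \ref{Theorem on hulls of relatively open sets} is applied at each relatively open stage to interchange $\Lambda_g$- and $\Lambda$-hulls freely.

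The chief obstacle is the expansion lemma of the second paragraph: the coupling between the Maxwell nonlinearity $E=B\times u$ and the two Els\"{a}sser-norm constraints defining $K_{r,s}$ forces a delicate, case-dependent choice of the wave-cone direction $\xi$, and the empty interior of $K_{r,s}^{lc,\Lambda}$ in $\R^{15}$ (so that everything must be done relative to $\mathscr{M}$) makes every computation on the magnetic side strictly rigid.
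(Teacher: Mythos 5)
Your high-level cycle of inclusions mirrors the paper's architecture: the paper establishes $\U_{r,s}\subset\bigcup_\tau K^{lc,\Lambda}_{\tau r,\tau s}\subset\bigcup_\tau\V^{lc,\Lambda_g}_{\tau r,\tau s,\varepsilon_\tau}\subset\U_{r,s}$ via Propositions~\ref{Proposition for Hull theorem 1} and~\ref{Proposition for Hull theorem 3}, together with the open-set bridge lemmas \ref{Lemma on laminates of relatively open sets} and~\ref{Theorem on hulls of relatively open sets}. However, two of the links in your chain rest on claims that are false, and in a way that goes to the heart of the method.

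The critical error is your argument that $\U_{r,s}\subset\bigcup_\tau\V^{lc,\Lambda_g}_{\tau r,\tau s,\varepsilon_\tau}$ can be proved by showing $E=B\times u$ and $S=S_{u,B}+\Pi I$ on all of $\U_{r,s}$, on the grounds that $|E-B\times u|^2$ and $\mathrm{dist}^2(S,\{S_{u,B}+\Pi I\})$ are $\Lambda$-convex. Neither function is $\Lambda$-convex, and these equalities most certainly do \emph{not} hold on $\U_{r,s}$. If they did, $\U_{r,s}$ would be trapped inside a positive-codimension submanifold of $\mathscr{M}$ and could not have nonempty $\mathscr{M}$-relative interior, which would collapse the entire convex-integration scheme. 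Lemma~\ref{Relaxation of the magnetic side} exhibits a $\Lambda$-segment whose endpoints satisfy $E_j=B_j\times u_j$ but whose barycenter has $E_0=B_0\times(u_0+v)$ with $B_0\times v\neq 0$; this alone shows $|E-B\times u|^2$ fails $\Lambda$-convexity. Likewise Lemmas~\ref{Lemma for step (ii)} and~\ref{lemma for step (iii)} produce hull points with $S-S_{u,B}-\Pi I\neq 0$. You are conflating two distinct facts: the $\Lambda_0$-affinity of $Q(V)=B\cdot E$ (which underlies Proposition~\ref{Proposition on rigidity of good Lamdba}, the rigidity of the $\Lambda_g$-hull) with a nonexistent $\Lambda$-convexity of the constitutive-set distance. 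Precisely because the constraint $E=B\times u$ \emph{is} rigid along good segments, the whole point of Section~\ref{Characterisations of the relative interior of the lamination convex hull} is that one must go through bad $\Lambda$-segments (as in Lemma~\ref{Relaxation of the magnetic side}) to relax it. The paper's route to this inclusion is instead the scaling argument of Proposition~\ref{Proposition for Hull theorem 3}: if $V\in\U_{r,s}$, relative openness gives $V/\mu\in K^{lc,\Lambda}_{r,s}$ for some $\mu<1$, hence $V\in(\mu K_{r,s})^{lc,\Lambda}\subset K^{lc,\Lambda}_{\sqrt\mu r,\sqrt\mu s}$ by one further $\Lambda$-split; no pointwise constraint on $E$ or $S$ is claimed or needed.

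The second problem is your ``expansion lemma.'' You sketch a single $\Lambda$-splitting taking $V_0\in K_{\tau r,\tau s}$ to two endpoints in $K_{r,s}$, and then hope compactness will promote this to a whole ball $B_{\mathscr{M}}(K_{\tau r,\tau s},\varepsilon_\tau)$. A one-step split only handles points lying exactly on $K_{\tau r,\tau s}$; it does not absorb perturbations in the $S$- and $E$-directions off the constitutive set. Since the whole difficulty lies in showing that such perturbations (i.e.\ $|S-S_{u,B}-\Pi I|<c$, $|E-B\times u|<c$) can be relaxed into $K^{lc,\Lambda}_{r,s}$, this is precisely what needs a multi-stage argument: the paper's Lemmas~\ref{Lemma for step (i)}--\ref{Lemma for step (v)} relax one constraint at a time (norm normalisation, rank-one symmetric perturbations, general symmetric perturbations, the electric field when $B\neq 0$, and finally the degenerate case $B=0$). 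Your sketch omits all of this, and the appeal to ``continuity of the construction'' cannot substitute for it.
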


We divide the proof of Theorem \ref{Hull theorem} into two propositions. 

\begin{prop} \label{Proposition for Hull theorem 1}
For every $\tau \in [0,1)$ there exists $\varepsilon_\tau > 0$ such that $\U_{r,s} \supset \V_{\tau r, \tau s, \varepsilon_\tau}$.
\end{prop}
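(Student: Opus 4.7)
The aim is to establish the pointwise inclusion $\V_{\tau r,\tau s,\varepsilon_\tau}\subset K_{r,s}^{lc,\Lambda}$; since $\V_{\tau r,\tau s,\varepsilon_\tau}$ is relatively open in $\mathscr{M}$ (being defined by strict inequalities on top of $B\cdot E=0$), this immediately yields $\V_{\tau r,\tau s,\varepsilon_\tau}\subset \U_{r,s}$. Given $V=(u,S,B,E)\in \V_{\tau r,\tau s,\varepsilon_\tau}$ with associated pressure $\Pi$, the plan is to exhibit $V$ as the barycentre of an explicit finite-order $\Lambda$-laminate supported in $K_{r,s}$. The natural coordinates for the whole argument are the Els\"asser variables $z^\pm=u\pm B$, in which $K_{r,s}$ becomes the product of two sphere constraints $\{|z^+|=r\}$ and $\{|z^-|=s\}$, subject to $|\Pi|\le rs$; this decoupling allows the two spheres to be addressed independently.

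\textbf{Stage A} (canonical sub-case). Suppose first that $V=V_0\in K$ with $|z_0^\pm|\le \sigma r,\sigma s$ and $|\Pi_0|\le\sigma^2 rs$ for some $\sigma<1$. I claim that two successive $\Lambda$-splittings place $V_0$ in $K_{r,s}^{lc,\Lambda}$. For the first, pick $w\in\R^3\setminus\{0\}$, $\xi_x\in w^\perp\setminus\{0\}$, $\xi_t=-z_0^-\cdot \xi_x$, and use the amplitude
\[
W = \bigl(\tfrac{w}{2},\; z_0^-\otimes\tfrac{w}{2}+\tfrac{w}{2}\otimes z_0^-,\; \tfrac{w}{2},\; \tfrac{w\times z_0^-}{2}\bigr).
\]
A direct computation shows $W\in\Lambda$, the endpoints $V_0+c_{\pm}W$ remain in $K$ because $\delta u=\delta B$ annihilates the quadratic tensor $\delta u\otimes\delta u-\delta B\otimes\delta B$, and the segment stays in $\mathscr{M}$: the linear coefficient of $(B_0+t\delta B)\cdot(E_0+t\delta E)$ equals $\tfrac12 w\cdot B_0\times(u_0-z_0^-)=\tfrac12 w\cdot B_0\times B_0 =0$ and the quadratic coefficient is $\tfrac14 w\cdot(w\times z_0^-)=0$. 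Since $|z_0^+|<r$, the line $z_0^++\R w$ meets the sphere $\{|z^+|=r\}$ in two points, which determines the signed amplitudes $c_\pm$ and the convex combination ratio; the endpoints land on $\{|z^+|=r\}$ with $z^-$ and $\Pi$ unchanged. The mirror construction, obtained from the $B\mapsto -B$ symmetry of ideal MHD and using amplitude $W'=(w'/2,\,z^+\otimes w'/2+w'/2\otimes z^+,\,-w'/2,\,z^+\times w'/2)$ with $\xi_t'=-z^+\cdot \xi_x'$, is then applied to each endpoint to bring $|z^-|$ to $s$, producing leaves in $K_{r,s}$.

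\textbf{Stage B} (reduction to canonical form). For general $V\in \V_{\tau r,\tau s,\varepsilon_\tau}$ the canonical errors $\delta S_0:=S-S_{u,B}-\Pi I$ and $\delta E_0:=E-B\times u$ have size $O(\varepsilon_\tau)$. Expanding $S_{u,B}$ and $B\times u$ along an arbitrary $\Lambda$-splitting $V=\lambda V_1+(1-\lambda)V_2$, with $\delta u:=u_1-u_2$ and $\delta B:=B_1-B_2$, yields the key identities
\[
\delta S_0=\lambda(1-\lambda)(\delta u\otimes \delta u-\delta B\otimes \delta B)+\lambda\delta S_{01}+(1-\lambda)\delta S_{02},
\]
\[
\delta E_0=\lambda(1-\lambda)\,\delta B\times \delta u+\lambda\delta E_{01}+(1-\lambda)\delta E_{02},
\]
where $\delta S_{0j}$, $\delta E_{0j}$ denote the endpoint errors. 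Thus each splitting absorbs part of the canonical errors through the quadratic recovery tensors. The wave-cone conditions on the amplitude $W=(\delta u,\delta S,\delta B,\delta E)$ are solved by choosing $\xi_x\perp \delta u,\delta B$, extracting $\delta E$ (up to a scalar along $\xi_x$) from $\xi_x\times \delta E=-\xi_t\delta B$, and constructing a symmetric $\delta S$ with $\delta S\xi_x=-\xi_t\delta u$ via Lemma \ref{Lemma on good solutions of matrix equations}; the $\mathscr{M}$-constraint along the segment reduces to $\delta B\cdot E+B\cdot \delta E=0$ and $\delta B\cdot \delta E=0$, arranged by tuning the remaining scalar in $\delta E$. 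Because two rank-$\le 2$ tensors of the form $\delta u\otimes\delta u-\delta B\otimes \delta B$ can sum to any symmetric $3\times 3$ matrix and $\delta B\times\delta u$ spans $\R^3$, a finite sequence of such splittings exhausts $\delta S_0$ and $\delta E_0$, yielding canonical leaves. For $\varepsilon_\tau$ sufficiently small, the bound $|\delta u|,|\delta B|=O(\sqrt{\varepsilon_\tau})$ keeps $|z_j^\pm|\le \sigma r,\sigma s$ and $|\Pi_j|\le\sigma^2 rs$ with $\sigma<1$ at every leaf, so Stage A applies and gives $V\in K_{r,s}^{lc,\Lambda}$.

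The \emph{main obstacle} is Stage B: jointly satisfying the error-matching identities, the linear wave-cone equations \eqref{Wave cone condition 1b}--\eqref{Wave cone condition 3b}, and the constraint $B\cdot E=0$ along every segment, while ensuring that the iteration terminates after finitely many splittings. The two-form / Clebsch viewpoint of Sections \ref{Maxwell two-forms}--\ref{Potentials for good Lambda-segments}, which encodes $B\cdot E=0$ as the algebraic condition that the Maxwell bivector $\omega=d\varphi\wedge d\psi$ is simple, together with the Els\"asser decomposition, is designed precisely to disentangle these coupled constraints.
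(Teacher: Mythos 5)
Your overall strategy is the same as the paper's: work in Els\"asser variables, first normalise $|z^\pm|$ to their target values (your Stage A matches the paper's first step), then absorb the canonical errors $S - S_{u,B} - \Pi I$ and $E - B\times u$ by iterated $\Lambda$-splittings using the quadratic recovery identities you correctly write down (which are the paper's formula \eqref{Formula for primitive metrics} and its cross-product analogue). The paper carries this out through a chain of five explicit lemmas: relaxation of the normalisation, then a rank-one correction $e\otimes e$, then a general symmetric $S$-correction by iterating rank-one pieces, then an $E$-correction, then the special case $B=0$.

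However, Stage B has genuine gaps, not merely omitted detail. First, the linear-algebra claim is false: $\delta u \otimes \delta u - \delta B \otimes \delta B = (\delta u + \delta B)\odot(\delta u - \delta B)$ is a symmetric tensor with at most one positive and one negative eigenvalue, so a sum of two such tensors never produces the identity matrix; you need to iterate (as the paper does, writing $-e\otimes e$ via $I$ and then any $S$ as a finite sum $\sum c_i f_i\otimes f_i$). Second, and more seriously, your sketch does not address the regime where $B$ is small but $E - B\times u$ is of order $\varepsilon_\tau$: writing $E - B\times u = B\times v$ forces $|v|$ to be large when $|B|$ is small, so a naive splitting with amplitudes of size $|v|$ throws the leaves outside $\V_{\tau r,\tau s,\varepsilon_\tau}$. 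The paper's proof of this step hinges on the specific scaling $c=(|B|/|v|)^{1/2}$, which balances $|cv| = |c^{-1}B| = |B\times v|^{1/2} = O(\varepsilon_\tau^{1/2})$ so that both factors stay controlled; this is the main trick of the lemma and nothing in your Stage B plays that role. Third, the case $B=0$ must be treated separately: there $B\cdot E = 0$ holds trivially, $E$ is unconstrained, and the endpoints of any useful splitting have $B_j\neq 0$, so the $\mathscr{M}$-constraint at the leaves is not a perturbation of a satisfied constraint at the base. Finally, asserting that the wave-cone equations, the $\mathscr{M}$-conditions $\delta B\cdot E + B\cdot\delta E = 0$ and $\delta B\cdot\delta E = 0$, and the prescribed quadratic corrections $\delta u\otimes\delta u-\delta B\otimes\delta B$ and $\delta B\times\delta u$ can all be arranged simultaneously is precisely the content that needs proof, since fixing $\delta u\otimes\delta u-\delta B\otimes\delta B$ does not determine $\delta B\times\delta u$, and the paper achieves the compatibility only by choosing the splittings in a very specific order and form.
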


\begin{prop} \label{Proposition for Hull theorem 3}
$\U_{r,s} \subset \cup_{\tau_0 < \tau < 1} K^{lc,\Lambda}_{\tau r,\tau s}$ for every $\tau_0 \in (0,1)$.
\end{prop}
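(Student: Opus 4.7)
The plan is to exploit a natural anisotropic scaling symmetry of the MHD state space. For $\tau>0$, define the linear map $\Phi_\tau \colon \R^{15} \to \R^{15}$ by
\[
\Phi_\tau(u,S,B,E) \defeq (\tau u,\; \tau^2 S,\; \tau B,\; \tau^2 E).
\]
First I would verify that $\Phi_\tau$ interacts compatibly with every structure in play. It is a bijection with inverse $\Phi_{1/\tau}$; it preserves the quadratic constraint $B \cdot E = 0$, so $\Phi_\tau(\mathscr{M}) = \mathscr{M}$; and it maps $K_{r,s}$ onto $K_{\tau r,\tau s}$, since the pressure transforms as $\tilde\Pi = \tau^2 \Pi$ (so $|\tilde\Pi| \le \tau^2 rs = (\tau r)(\tau s)$) and $\tau^2 E = \tau^2(B \times u) = (\tau B)\times(\tau u)$.

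Next I would check the compatibility of $\Phi_\tau$ with the wave cone. If a vector $V \in \R^{15}$ satisfies \eqref{Wave cone condition 1}--\eqref{Wave cone condition 3} at a direction $\xi = (\xi_x,\xi_t)\in(\R^3\setminus\{0\})\times\R$, then a short direct computation (multiplying the three relations by $\tau$, $\tau^2$, $\tau^2$ respectively) shows $\Phi_\tau V$ satisfies the same three relations at the modified direction $\tilde\xi \defeq (\xi_x,\tau\xi_t)$, which still belongs to $(\R^3\setminus\{0\})\times\R$ because $\xi_x$ is left untouched. Since $\Phi_\tau$ is linear it respects convex combinations, and combined with wave-cone invariance this shows that $\Phi_\tau$ sends $\Lambda$-segments to $\Lambda$-segments. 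Induction on the lamination order $N$ then gives $\Phi_\tau(K_{r,s}^{N,\Lambda}) = K_{\tau r,\tau s}^{N,\Lambda}$ for every $N\in\N_0$, and hence
\[
\Phi_\tau(K_{r,s}^{lc,\Lambda}) = K_{\tau r,\tau s}^{lc,\Lambda} \qquad \text{for every } \tau>0.
\]

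With the scaling identity in hand, the conclusion is essentially a one-liner. Fix $V_0 \in \U_{r,s}$ and choose $\delta>0$ with $B_\mathscr{M}(V_0,\delta) \subset K_{r,s}^{lc,\Lambda}$. The map $\tau \mapsto \Phi_{1/\tau}V_0$ is continuous with $\Phi_1 V_0 = V_0$, so for every $\tau$ sufficiently close to $1$ (in particular, one may take $\tau \in (\tau_0,1)$) one has $\Phi_{1/\tau}V_0 \in B_\mathscr{M}(V_0,\delta) \subset K_{r,s}^{lc,\Lambda}$. Applying $\Phi_\tau$ then yields
\[
V_0 = \Phi_\tau \Phi_{1/\tau} V_0 \in \Phi_\tau(K_{r,s}^{lc,\Lambda}) = K_{\tau r,\tau s}^{lc,\Lambda},
\]
which is exactly the desired inclusion.

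The only subtlety worth flagging is to ensure that the wave-cone invariance in the middle step genuinely lands in $\Lambda$ (with nonzero spatial part) rather than merely in $\Lambda_0$; because $\Phi_\tau$ fixes $\xi_x$, this is automatic. Otherwise the argument is purely formal: the nontrivial content is the geometric observation that the relaxed MHD system and its wave cone are jointly invariant under the anisotropic dilation $(u,B)\mapsto(\tau u,\tau B)$, $(S,E)\mapsto(\tau^2 S,\tau^2 E)$, after which the interior hypothesis together with continuity of $\Phi_{1/\tau}$ do all the work.
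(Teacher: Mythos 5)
Your proof is correct, and it takes a genuinely different route from the paper's. The paper scales \emph{isotropically}, $V \mapsto \mu V$, which preserves $\Lambda$ (since $\Lambda$ is a cone) but does \emph{not} map $K_{r,s}$ into any $K_{r',s'}$: if $(z^+,z^-,z^+\otimes z^-+\Pi I) \in K_{r,s}$, then $\mu(z^+,z^-,z^+\otimes z^-+\Pi I)$ has Els\"asser lengths $\mu r, \mu s$ but matrix part $\mu z^+\otimes z^-+\mu\Pi I$, which differs from $(\mu z^+)\otimes(\mu z^-)+\tilde\Pi I$ for any $\tilde\Pi$. The paper therefore needs a second step, an explicit $\Lambda$-segment decomposition in Els\"asser variables showing $\mu K_{r,s}\subset K_{\sqrt\mu r,\sqrt\mu s}^{1,\Lambda}$, taking $\tau=\sqrt\mu$. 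Your anisotropic dilation $\Phi_\tau(u,S,B,E)=(\tau u,\tau^2 S,\tau B,\tau^2 E)$ is precisely the symmetry that maps $K_{r,s}$ \emph{onto} $K_{\tau r,\tau s}$ exactly (with $\Pi\mapsto\tau^2\Pi$, $E=B\times u\mapsto (\tau B)\times(\tau u)$), while still preserving $\mathscr{M}$, $\Lambda$ (via $\tilde\xi=(\xi_x,\tau\xi_t)$, which keeps $\xi_x\neq0$), and hence lamination convex hulls by linearity and induction. This collapses the paper's two-step argument into a single push-forward and dispenses with the Els\"asser laminate; the trade-off is that one must verify a slightly longer list of invariances of $\Phi_\tau$, all of which you have done. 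Either argument is valid; yours isolates the structural scaling symmetry of the constitutive set more cleanly, while the paper's stays closer to the cone property of $\Lambda$ plus a hands-on lamination.
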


Propositions \ref{Proposition for Hull theorem 1}--\ref{Proposition for Hull theorem 3} are proved in the rest of this chapter. Assuming Propositions \ref{Proposition for Hull theorem 1}--\ref{Proposition for Hull theorem 3}, Theorem \ref{Hull theorem} is obtained as follows:

\begin{proof}[Proof of Theorem \ref{Hull theorem}]
Whenever $0 < \tau_0 < 1$ and the constants $\varepsilon_\tau > 0$ are small enough, Propositions \ref{Theorem on hulls of relatively open sets} and \ref{Proposition for Hull theorem 1} give $K^{lc,\Lambda}_{r,s} \supset \cup_{\tau_0 < \tau < 1} \V_{\tau r, \tau s, \varepsilon_\tau}^{lc,\Lambda} = \cup_{\tau_0 < \tau < 1} \V_{\tau r, \tau s, \varepsilon_\tau}^{lc,\Lambda_g}$. Together with Proposition \ref{Lemma on laminates of relatively open sets}, which says that $\cup_{\tau_0 < \tau < 1} \V_{\tau r, \tau s, \varepsilon_\tau}^{lc,\Lambda_g}$ is relatively open in $\mathscr{M}$, this yields that $\U_{r,s} \supset \cup_{\tau_0 < \tau < 1} \V_{\tau r, \tau s, \varepsilon_\tau}^{lc,\Lambda_g}$ by the definition of $\U_{r,s}$.

Next, the inclusion $\cup_{\tau_0 < \tau < 1} \V_{\tau r,\tau s,\varepsilon_\tau}^{lc,\Lambda_g} \supset \cup_{\tau_0 < \tau < 1} K^{lc,\Lambda}_{\tau r,\tau s}$ follows directly from the fact that $\V_{\tau r,\tau s,\varepsilon_\tau} \supset K_{\tau r, \tau s}$ and Proposition \ref{Theorem on hulls of relatively open sets}. Proposition \ref{Proposition for Hull theorem 3} then says that $\U_{r,s} \subset \cup_{\tau_0 < \tau < 1} K^{lc,\Lambda}_{\tau r,\tau s}$.

Given parameters $\varepsilon_\tau > 0$ we choose $\tilde{\varepsilon}_\tau > 0$ such that $\V_{\tau r, \tau s, \varepsilon_\tau} \supset B_{\mathscr{M}}(K_{\tau r, \tau s},\tilde{\varepsilon}_\tau)$, and then $\U_{r,s} \supset \cup_{\tau_0 < \tau < 1}  B_{\mathscr{M}}(K_{\tau r, \tau s},\tilde{\varepsilon}_\tau)^{lc,\Lambda_g} \supset \cup_{\tau_0 < \tau < 1} K^{lc,\Lambda}_{\tau r,\tau s} \supset \U_{r,s}$. Theorem \ref{Hull theorem} holds for these adjusted parameters $\tilde{\varepsilon}_\tau > 0$.
\end{proof}

\subsection{Els\"{a}sser variables in relaxed MHD}
In some of the computations on relaxed MHD it will be convenient to replace the variables $(u,S,B,E)$ by Els\"{a}sser variables and a matrix component, $(z^+,z^-,M)$, which satisfy
\begin{align*}
& z^\pm = u \pm B, \qquad u = \frac{z^++z^-}{2}, \qquad B = \frac{z^+-z^-}{2}, \\
& M = S + A, \qquad M^T = S-A, \qquad S = \frac{M+M^T}{2}, \qquad A = \frac{M-M^T}{2}.
\end{align*}
The main advantage is that the constraint set obtains the particularly simple form
\[K_{r,s} = \{(z^+,z^-,z^+ \otimes z^- + \Pi I) \colon |z^+| = r, \; |z^-| = s, \; \abs{\Pi} \le rs\}.\]
The wave cone conditions \eqref{Wave cone condition 1}--\eqref{Wave cone condition 3} are written in Els\"{a}sser formalism as
\begin{equation} \label{Wave cone conditions in Elsasser variables}
\xi_x \cdot z^\pm = 0, \qquad M \xi_x + \xi_t z^+ = 0, \qquad M^T \xi_x + \xi_t z^- = 0.
\end{equation}

\subsection{The proof of Proposition \ref{Proposition for Hull theorem 1}}
Proposition \ref{Proposition for Hull theorem 1} gives our first estimation on the hull $K^{lc,\Lambda}_{r,s}$. Below, we further divide the proof of Proposition \ref{Proposition for Hull theorem 1} into five steps.

Let $0 \le \tau < 1$. Below, steps (i)--(v) are expressed under the assumption that $V \in \V_{\tau r,\tau s, \varepsilon_\tau}$, that is, $\abs{u+B} < \tau r + \varepsilon_\tau$, $\abs{u-B} < \tau s + \varepsilon_\tau$, $\abs{e \otimes e} < \varepsilon_\tau$, $\abs{S} < \varepsilon_\tau$, $\abs{B \times v} < \varepsilon_\tau$, $\abs{E} < \varepsilon_\tau$ and $\abs{\Pi} < \tau^2 rs + \varepsilon_\tau$. The constant $\varepsilon_\tau > 0$ varies from step to step.

\vspace{0.2cm}
(i) $V = (u, S_{u,B} + \Pi I, B, B \times u) \in K_{r,s}^{lc,\Lambda}$.

\vspace{0.2cm}
(ii) $V = (u, S_{u,B} + e \otimes e + \Pi I, B, B \times u) \in K_{r,s}^{lc,\Lambda}$.

\vspace{0.2cm}
(iii) $V = (u, S_{u,B} + S + \Pi I, B, B \times u) \in K_{r,s}^{lc,\Lambda}$.

\vspace{0.2cm}
(iv) $V = (u, S_{u,B} + S + \Pi I, B, B \times u + B \times v) \in K_{r,s}^{lc,\Lambda}$.

\vspace{0.2cm}
(v) $V = (u, S_{u,0} + S + \Pi I, 0, E) \in K_{r,s}^{lc,\Lambda}$.

\vspace{0.2cm}
\noindent Steps (i)--(v) are restated in Lemmas \ref{Lemma for step (i)}--\ref{Lemma for step (v)}.

In the first step we relax the constraints $\abs{z^+} = r$ and $\abs{z^-} = s$ to $\abs{z^+} \le r$ and $\abs{z^-} \le s$. 
The proof is most conveniently presented in Els\"{a}sser variables which facilates the search for $\Lambda$ combinations. For later use the statement is expressed in terms of the sets $\V_{\tau r,\tau s, \varepsilon_\tau}$.

\begin{lem}[ Relaxation of the normalisation] \label{Lemma for step (i)}
$(u,S_{u,B} + \Pi I,B,B \times u) \in K_{r,s}^{lc,\Lambda}$ whenever $\abs{z^+} < \tau r + \varepsilon_\tau^{(1)}$, $\abs{z^-} < \tau s + \varepsilon_\tau^{(1)}$ and $\abs{\Pi} < rs$, where $\varepsilon_\tau^{(1)} = \min\{r-\tau r, s - \tau s\}$.
\end{lem}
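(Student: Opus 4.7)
The plan is to switch to Els\"{a}sser variables, in which $K_{r,s} = \{(z^+, z^-, z^+ \otimes z^- + \Pi I) : |z^+| = r,\ |z^-| = s,\ |\Pi| \le rs\}$ and the given state translates to $V_0 = (z^+_0, z^-_0, z^+_0 \otimes z^-_0 + \Pi_0 I)$. Since $\varepsilon_\tau^{(1)} = (1-\tau)\min(r,s)$, the hypothesis reduces to $|z^+_0| < r$, $|z^-_0| < s$, and $|\Pi_0| < rs$. The strategy is a two-stage $\Lambda$-splitting: first inflate $|z^+|$ to $r$ while keeping $z^-$ and $\Pi$ frozen, then inflate $|z^-|$ to $s$ by a symmetric wave.

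The key observation is that for any $\xi_x \in \R^3 \setminus \{0\}$ and any $e \in \R^3$ with $e \perp \xi_x$, the perturbation $W^+ = (e, 0, e \otimes z^-_0)$ with frequency vector $\xi = (\xi_x, -z^-_0 \cdot \xi_x)$ satisfies the wave cone conditions \eqref{Wave cone conditions in Elsasser variables}: indeed, $\xi_x \cdot e = 0$ by choice, $(e \otimes z^-_0)\xi_x + \xi_t e = (z^-_0 \cdot \xi_x) e - (z^-_0 \cdot \xi_x) e = 0$ after substituting $\xi_t = -z^-_0 \cdot \xi_x$, and $(z^-_0 \otimes e)\xi_x + \xi_t \cdot 0 = (e \cdot \xi_x) z^-_0 = 0$. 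The decisive algebraic identity
\[M_0 + \epsilon\, e \otimes z^-_0 = (z^+_0 + \epsilon e) \otimes z^-_0 + \Pi_0 I\]
shows that perturbing by $\epsilon W^+$ preserves the rank-one-plus-scalar form of $M$ and leaves $\Pi_0$ invariant. By a fully symmetric construction, $W^- = (0, f, z^+ \otimes f)$ with $f \perp \eta_x$ and $\eta_t = -z^+ \cdot \eta_x$ lies in $\Lambda$, oscillates only $z^-$, and keeps $z^+$ and $\Pi$ unchanged.

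For Step 1, the quadratic $\epsilon \mapsto |z^+_0 + \epsilon e|^2 = |z^+_0|^2 + 2\epsilon(z^+_0 \cdot e) + \epsilon^2$ equals $r^2$ at two values $\epsilon_+ > 0 > \epsilon_-$ because $|z^+_0| < r$, giving a $\Lambda$-splitting $V_0 = \mu V^+_+ + (1-\mu) V^+_-$ with $V^+_\pm = V_0 + \epsilon_\pm W^+$ and $\mu = -\epsilon_-/(\epsilon_+ - \epsilon_-) \in (0,1)$. Both resulting points have $|z^+| = r$, $z^- = z^-_0$, $\Pi = \Pi_0$. Step 2 applies the analogous splitting with $W^-$ to each $V^+_\pm$, yielding four endpoints with $|z^+| = r$, $|z^-| = s$, $|\Pi| = |\Pi_0| < rs$, hence in $K_{r,s}$; therefore $V_0 \in K_{r,s}^{2,\Lambda} \subset K_{r,s}^{lc,\Lambda}$. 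The main subtlety is verifying that $\delta M = e \otimes z^-_0$ (and symmetrically $z^+ \otimes f$) is precisely what preserves the structure $M = z^+ \otimes z^- + \Pi I$, which is the algebraic miracle making the Els\"{a}sser formalism efficient here; the degenerate case $z^-_0 = 0$ is harmless, since $W^+$ then reduces to $(e, 0, 0)$ with $\xi_t = 0$ and still lies in $\Lambda$ because only $\xi_x \neq 0$ is required.
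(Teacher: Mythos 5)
Your proof is correct. Both you and the paper use the same Els\"{a}sser scaffolding---two successive $\Lambda$-splits, first inflating $\abs{z^+}$ to $r$ and then $\abs{z^-}$ to $s$, landing in $K_{r,s}^{2,\Lambda}$---but the $\Lambda$-direction is genuinely different. The paper moves $z^+$ \emph{antipodally} along the ray through $z^+_0$, choosing $\xi_x\in\{z^+_0,z^-_0\}^\perp\setminus\{0\}$ and $\xi_t=0$, so that orthogonality of $\xi_x$ to both $z^+_0$ and $z^-_0$ kills $\delta M\,\xi_x$ and $\delta M^T\xi_x$ simultaneously; you instead perturb along an arbitrary nonzero $e\perp\xi_x$ and absorb the surviving term $(z^-_0\cdot\xi_x)\,e$ by setting $\xi_t=-z^-_0\cdot\xi_x$, so your direction traverses a general chord of the sphere $\{\abs{z^+}=r\}$ rather than a diameter. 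The payoff of your extra freedom is that the quadratic $\abs{z^+_0+\epsilon e}^2=r^2$ always has two real roots of opposite sign under the sole hypothesis $\abs{z^+_0}<r$, so the degenerate cases $z^+_0=0$ or $z^-_0=0$ are handled uniformly within the same two splits; the paper treats $z^+=0$ (and $z^-=0$) by an additional separate split, producing a new $z^+=\pm z^-$ and implicitly requiring $\abs{z^-}<r$ to continue. The algebraic identity $M_0+\epsilon\,e\otimes z^-_0=(z^+_0+\epsilon e)\otimes z^-_0+\Pi_0 I$, the verification of all four wave-cone conditions in Els\"{a}sser form, and the weight $\mu=-\epsilon_-/(\epsilon_+-\epsilon_-)\in(0,1)$ are all correctly stated.
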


\begin{proof}
Suppose first $z^+,z^- \neq 0$. In terms of Els\"{a}sser variables,
\begin{align*}
   (z^+,z^-,z^+ \otimes z^- + \Pi I)
&= \lambda \left( \frac{r}{\abs{z^+}} z^+,z^-, \frac{r}{\abs{z^+}} z^+ \otimes z^- + \Pi I \right) \\
&+ (1-\lambda) \left( -\frac{r}{\abs{z^+}} z^+,z^-, -\frac{r}{\abs{z^+}} z^+ \otimes z^- + \Pi I \right)
\end{align*}
for $2\lambda -1 = |z^+|/r \in (0,1)$; here the $\Lambda$-direction is $(2 r z^+/|z^+|, 0, 2 r z^+/|z^+| \otimes z^-)$, so that \eqref{Wave cone conditions in Elsasser variables} are satisfied with any $\xi_x \in \{z^+,z^-\}^\perp \setminus \{0\}$ and $\xi_t = 0$.  Furthermore,
\begin{align*}
&\left( \pm \frac{r}{\abs{z^+}} z^+,z^-, \pm \frac{r}{\abs{z^+}} z^+ \otimes z^- + \Pi I \right) \\
= &\tilde{\lambda} \left( \pm \frac{r}{\abs{z^+}} z^+, \frac{s}{\abs{z^-}} z^-, \pm \frac{r}{\abs{z^+}} z^+ \otimes \frac{s}{\abs{z^-}} z^- + \Pi I \right) \\
+ &(1-\tilde{\lambda}) \left( \pm \frac{r}{\abs{z^+}} z^+, -\frac{s}{\abs{z^-}} z^-, \mp \frac{r}{\abs{z^+}} z^+ \otimes \frac{s}{\abs{z^-}} z^- + \Pi I \right) \in K_{r,s}^{1,\Lambda}
\end{align*}
for $2\tilde{\lambda}-1 = |z^-|/s \in (0,1)$; to show that the corresponding directions belong to $\Lambda$ we can again take $\xi_x \in \{z^+,z^-\}^\perp \setminus \{0\}$ and $\xi_t = 0$. Thus we have shown that $(z^+,z^-,z^+ \otimes z^- + \Pi I) \in K_{r,s}^{2,\Lambda}$.

Suppose next $z^+ = 0$ and $z^- \neq 0$. Now $(0,z^-,\pi I) = 2^{-1} (z^-,z^-,z^- \otimes z^- + \Pi I) + 2^{-1} (-z^-,z^-,-z^- \otimes z^- + \Pi I) \in K^{3,\Lambda}_{r,s}$, where we may choose $\xi$ with $\xi_t = 0$ and $\xi_x \in \{z^-\}^\perp \setminus \{0\}$. The remaining cases with $z^- = 0$ are similar. 
\end{proof}

Steps (ii)--(iii) are covered in the next two lemmas. This time we get rid of the constraint $S = S_{u,b} + \Pi I$. It is 
easier to deal first with a symmetric  rank-one matrix and then iterate. 

\begin{lem}[Adding a symmetric rank-one matrix] \label{Lemma for step (ii)}
$(u, S_{u,B} + e \otimes e + \Pi I, B, B \times u) \in K^{lc,\Lambda}_{r,s}$ whenever $\abs{z^+} < \tau r + \varepsilon_\tau^{(2)}$, $\abs{z^-} < \tau s + \varepsilon_\tau^{(2)}$, $\abs{e} < \varepsilon_\tau^{(2)}$ and $\abs{\Pi} < \tau^2 rs + \varepsilon_\tau^{(2)}$.
\end{lem}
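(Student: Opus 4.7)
The plan is to realise the target as the midpoint of a single $\Lambda$-segment whose endpoints have the simpler form handled by Lemma~\ref{Lemma for step (i)}, namely $(u',S_{u',B'}+\Pi' I,B',B'\times u')$. Since one only needs to add $e\otimes e$ to the symmetric part, the natural guess is to perturb only the velocity while keeping $B$ fixed. Concretely, set
\[
V_\pm \defeq \bigl(u\pm e,\; S_{u\pm e,\,B}+\Pi I,\; B,\; B\times (u\pm e)\bigr).
\]
Both $V_\pm$ satisfy $B\cdot E=0$, so they lie in $\mathscr{M}$. Using the identity $(u\pm e)\otimes(u\pm e)=u\otimes u\pm(u\otimes e+e\otimes u)+e\otimes e$, averaging yields $\tfrac12(V_++V_-)=(u,\,S_{u,B}+e\otimes e+\Pi I,\,B,\,B\times u)$, exactly the target.

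For the wave-cone verification, the difference direction is
\[
V_+-V_- = \bigl(2e,\; 2(u\otimes e+e\otimes u),\; 0,\; 2B\times e\bigr),
\]
and I seek $\xi\in(\R^3\setminus\{0\})\times\R$ satisfying \eqref{Wave cone condition 1b}--\eqref{Wave cone condition 3b} for this direction. From \eqref{Wave cone condition 1b} one gets $\xi_x\cdot e=0$. Plugging the direction into \eqref{Wave cone condition 3b} gives $\xi_x\times(2B\times e)=2(\xi_x\cdot e)B-2(\xi_x\cdot B)e=-2(\xi_x\cdot B)e$, so also $\xi_x\perp B$. Finally, \eqref{Wave cone condition 2b} reads $2\xi_t e+2(u\otimes e+e\otimes u)\xi_x=2(\xi_t+u\cdot\xi_x)e=0$, which determines $\xi_t=-u\cdot\xi_x$. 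The homogeneous system $\xi_x\perp e$, $\xi_x\perp B$ in $\R^3$ always has a nonzero solution (take $\xi_x=e\times B$ when these are linearly independent, and any nonzero vector in the orthogonal $2$-plane otherwise), so the required $\xi$ exists and $[V_-,V_+]$ is a $\Lambda$-segment.

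It remains to place $V_\pm$ in $K_{r,s}^{lc,\Lambda}$ by applying Lemma~\ref{Lemma for step (i)} to $(u\pm e,B,\Pi)$. The Els\"asser variables of $V_\pm$ are $z^+\pm e$ and $z^-\pm e$, hence $|z^\pm\pm e|\le|z^\pm|+|e|<\tau r+2\varepsilon_\tau^{(2)}$, and similarly with $s$ in place of $r$. Fix an auxiliary parameter $\tau'\defeq(1+\tau)/2\in(\tau,1)$. Then Lemma~\ref{Lemma for step (i)} applies at level $\tau'$ as soon as
\[
\tau r+2\varepsilon_\tau^{(2)}<\tau' r+\varepsilon_{\tau'}^{(1)},\qquad \tau s+2\varepsilon_\tau^{(2)}<\tau' s+\varepsilon_{\tau'}^{(1)},\qquad \tau^2 rs+\varepsilon_\tau^{(2)}<rs,
\]
all of which hold provided $\varepsilon_\tau^{(2)}$ is chosen small enough in terms of $r,s,\tau$. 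This furnishes the constant $\varepsilon_\tau^{(2)}$ quantified in the statement, and one $\Lambda$-split then yields $V\in K_{r,s}^{2,\Lambda}\subset K_{r,s}^{lc,\Lambda}$.

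The main (mild) obstacle is the bookkeeping of tolerances so that the endpoints land in the range of Lemma~\ref{Lemma for step (i)}; there is no genuine geometric difficulty because we work in $K_{r,s}^{lc,\Lambda}$ (not the rigid $K_{r,s}^{lc,\Lambda_g}$) and because the $B$-component is kept fixed so the bivector side is automatically compatible with $\mathscr{M}$. This step is a prototype for step~(iii), where iterating over a spectral decomposition $S=\sum_i\pm e_i\otimes e_i$ absorbs an arbitrary symmetric perturbation, and for step~(iv), where an analogous single split in the $E$-variable is used.
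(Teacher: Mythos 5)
Your proof is correct and follows the paper's own argument: the identical decomposition $\tfrac12\bigl((u+e,S_{u+e,B}+\Pi I,B,B\times(u+e))+(u-e,S_{u-e,B}+\Pi I,B,B\times(u-e))\bigr)$, the same $\Lambda$-direction $(2e,2(u\otimes e+e\otimes u),0,2B\times e)$ with $\xi_x\perp e,B$ and $\xi_t=-u\cdot\xi_x$, and the same reduction to Lemma~\ref{Lemma for step (i)} for the endpoints. Your tolerance bookkeeping via an auxiliary level $\tau'=(1+\tau)/2$ is a cosmetic variant of the paper's simpler requirement $\varepsilon_\tau^{(2)}\le\varepsilon_\tau^{(1)}/2$.
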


\begin{proof}
We use the formula $S_{u,B} + e \otimes e = (S_{u+e,B} + S_{u-e,B})/2$ to write
\begin{align*}
  & (u, S_{u,B} + e \otimes e + \Pi I, B, B \times u) \\
= & \frac{1}{2} (u + e, S_{u+e,B} + \Pi I, B, B \times (u+e)) \\
+ & \frac{1}{2} (u - e, S_{u-e,B} + \Pi I, B, B \times (u-e)) \\
\eqdef & \frac{1}{2} (V_1 + V_2).
\end{align*}
Here Lemma \ref{Lemma for step (i)} gives $V_1, V_2 \in K^{lc,\Lambda}_{r,s}$ as long as $\varepsilon_\tau^{(2)} \le \varepsilon_\tau^{(1)}/2$. (We do not track such dependence of $\varepsilon^{(k)}_\tau$ on $\varepsilon^{(k-1)}_\tau$ explicitly in the forthcoming proofs.) The $\Lambda$-direction is
\[V_1-V_2 = (2e,2(u \otimes e + e \otimes u),0,2B \times e).\]
If $B \times e \neq 0$, we choose $\xi_x = B \times e$ and $\xi_t = - u \cdot B \times e$; if $B \times e = 0$, we choose any $\xi_x \in \{u,e\}^\perp \setminus \{0\}$ and $\xi_t = 0$.
\end{proof}

We then take further $\Lambda$-convex combinations to replace $e \otimes e$ by more general symmetric matrices.
\begin{lem} [Relaxation of the fluid side]\label{lemma for step (iii)}
$(u, S_{u,B} + S + \Pi I, B, B \times u) \in K_{r,s}^{lc,\Lambda}$ whenever $\abs{z^+} < \tau r + \varepsilon_\tau^{(3)}$, $\abs{z^-} < \tau s + \varepsilon_\tau^{(3)}$, $\abs{S} < \varepsilon_\tau^{(3)}$ and $\abs{\Pi} < \tau^2 rs + \varepsilon_\tau^{(3)}$.
\end{lem}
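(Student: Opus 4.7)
The plan is to reduce the statement to a three-fold application of Lemma \ref{Lemma for step (ii)}, by representing $S$ as a sum of three rank-one symmetric tensors $f_k\otimes f_k$ plus a multiple of the identity that is absorbed into the pressure.

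First I would spectrally decompose $S=\sum_{i=1}^3\lambda_i\,e_i\otimes e_i$ with orthonormal $\{e_i\}$ and $|\lambda_i|\le|S|$. Because Lemma \ref{Lemma for step (ii)} only produces additive pieces of the positive rank-one form $+\,e\otimes e$, the negative eigenvalues of $S$ must be disposed of before iterating. I would do this by setting $c\defeq\max(0,-\min_i\lambda_i)\in[0,|S|]$ and $\mu_i\defeq\lambda_i+c\in[0,2|S|]$, and taking $f_i\defeq\sqrt{\mu_i}\,e_i$, so that $|f_i|\le\sqrt{2\varepsilon_\tau^{(3)}}$ and
\[
S_{u,B}+S+\Pi I \;=\; S_{u,B}+f_1\otimes f_1+f_2\otimes f_2+f_3\otimes f_3+(\Pi-c)I.
\]

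Next I would apply the splitting of Lemma \ref{Lemma for step (ii)} three times in succession: at the $k$-th step I use the identity $S_{u',B}+f_k\otimes f_k=\tfrac12\bigl(S_{u'+f_k,B}+S_{u'-f_k,B}\bigr)$ to replace $u'$ by $u'\pm f_k$. The $\Lambda$-direction at the $k$-th splitting is $\bigl(2f_k,\,2(u'\otimes f_k+f_k\otimes u'),\,0,\,2B\times f_k\bigr)$, and it lies in $\Lambda$ by the very argument used in the proof of Lemma \ref{Lemma for step (ii)} (choose $\xi_x=B\times f_k$ when this is nonzero, and an appropriate vector perpendicular to $B$ otherwise, with $\xi_t=-u'\cdot\xi_x$). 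After the three splittings the target point is expressed as a $\Lambda$-convex combination of the $2^3$ endpoints
\[
V_\sigma = \Bigl(u+\textstyle\sum_i\sigma_i f_i,\; S_{u+\sum_i\sigma_i f_i,B}+(\Pi-c)I,\;B,\;B\times(u+\textstyle\sum_i\sigma_i f_i)\Bigr),\quad\sigma\in\{\pm1\}^3,
\]
each of which has exactly the form covered by Lemma \ref{Lemma for step (i)}.

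It then remains to choose $\varepsilon_\tau^{(3)}>0$ small enough that Lemma \ref{Lemma for step (i)} (and Lemma \ref{Lemma for step (ii)} at the intermediate steps) actually applies at each $V_\sigma$. Because $\bigl|\sum_i\sigma_i f_i\bigr|\le 3\sqrt{2\varepsilon_\tau^{(3)}}$ and $\tau<1$, this reduces to two routine inequalities: the perturbed Els\"asser norms remain below $\tau r+\varepsilon_\tau^{(1)}$ and $\tau s+\varepsilon_\tau^{(1)}$, and the shifted pressure satisfies $|\Pi-c|\le|\Pi|+|S|\le\tau^2rs+2\varepsilon_\tau^{(3)}<rs$. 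I do not anticipate a real obstacle here; the only slightly delicate point is the bookkeeping of the sign of eigenvalues, which is exactly what forces the pressure shift by $c$ so that all three rank-one pieces can enter Lemma \ref{Lemma for step (ii)} with the correct sign.
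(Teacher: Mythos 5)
Your proof is correct, and it takes a genuinely different (and in some respects cleaner) route than the paper's.

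The paper first handles $S=-e\otimes e$ by rewriting $-e\otimes e+\Pi I$ via the identity $I=\abs{e}^{-2}(e\otimes e+f\otimes f+g\otimes g)$ for an orthogonal frame of equal-norm vectors, splitting with the \emph{pure-stress} wave direction $(0,2f\otimes f-2g\otimes g,0,0)\in\Lambda$, and reducing to the rank-one-positive case of Lemma~\ref{Lemma for step (ii)}; it then iterates using the observation that $(0,e\otimes e\pm f\otimes f,0,0)\in\Lambda$ and represents arbitrary small symmetric $S$ as $\sum_i c_i f_i\otimes f_i$ via the primitive-metrics identity $f\otimes g+g\otimes f=\tfrac12(f+g)^{\otimes2}-\tfrac12(f-g)^{\otimes2}$. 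You instead spectrally decompose $S$, absorb the negative spectrum into the pressure via the shift $c=\max(0,-\min_i\lambda_i)$, and then apply exactly the rank-one splitting of Lemma~\ref{Lemma for step (ii)} along the three eigendirections, so that every endpoint after three steps is directly in the range of Lemma~\ref{Lemma for step (i)}. The pay-off of your version is that it avoids pure-stress $\Lambda$-directions altogether and replaces primitive metrics by the spectral theorem, which is arguably more transparent; the price is that the shift into the pressure must be tracked (your estimate $\abs{\Pi-c}\le\abs{\Pi}+\abs{S}<\tau^2 rs+2\varepsilon_\tau^{(3)}$ handles this), and one must observe that the $f_i$ have size $O(\sqrt{\varepsilon_\tau^{(3)}})$, so $\varepsilon_\tau^{(3)}$ has to be chosen on the order of $(\varepsilon_\tau^{(1)})^2$ — which you note. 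One small imprecision: in the degenerate case $B\times f_k=0$ (including $f_k=0$), the admissible $\xi_x$ must be perpendicular to both $f_k$ and $B$ (automatic when $B\parallel f_k$), not merely to $B$; with $\xi_t=-u'\cdot\xi_x$ the wave-cone check then goes through exactly as you indicate.
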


\begin{proof}
First we cover the case where $S = -e \otimes e$. Choose an orthogonal basis $\{e,f,g\}$ of $\R^3$, where $\abs{e} = \abs{f} = \abs{g}$. Write $I = \abs{e}^{-2} (e \otimes e + f \otimes f + g \otimes g)$ which, in combination with Lemma \ref{Lemma for step (ii)}, yields
\begin{align*}
   (u,S_{u,B} - e \otimes e + \Pi I, B, B \times u)
&= (u,S_{u,B} + f \otimes f + g \otimes g + (\Pi - \abs{e}^2) I, B, B \times u) \\
&= \frac{1}{2} (u,S_{u,B} + 2 f \otimes f + (\Pi - \abs{e}^2) I, B, B \times u) \\
&+ \frac{1}{2} (u,S_{u,B} + 2 g \otimes g + (\Pi - \abs{e}^2) I, B, B \times u) \\
&\in K_{r,s}^{lc,\Lambda};
\end{align*}
the $\Lambda$-direction is $\bar{V} = (0,2 f \otimes f - 2 g \otimes g, 0, 0)$ and we may choose $(\xi_x,\xi_t) = (e,0)$.

By noting that $(0, e \otimes e \pm f \otimes f, 0, 0) \in \Lambda$ for every $e,f \in \R^3$ and iterating, we obtain the case
\begin{equation} \label{Symmetric matrices via primitive metrics}
S = \sum_{i=1}^N c_i f_i \otimes f_i
\end{equation}
for any unit vectors $f_i \in \mathbb{S}^2$ and $c_i \in \R$ with $\sum_{i=1}^N \abs{c_i} < \varepsilon_\tau$. The proof is finished by noting that every $S \in \R_{\operatorname{sym}}^{3 \times 3}$ with $\abs{S} < \varepsilon_\tau$ can be written in the form \eqref{Symmetric matrices via primitive metrics}. Indeed, whenever $f$ and $g$ are unit vectors, we may write $f \otimes g + g \otimes f = 2^{-1} (f+g) \otimes (f+g) - 2^{-1} (f-g) \otimes (f-g)$.
\end{proof}

We have now covered the case where $V$ differs from an element of $K$ by the perturbation $S$ of the symmetric matrix part. Our next aim, in the following two lemmas, is to allow $E \neq B \times u$ in $V = (u,S,B,E)$.  Recall that
$K^{lc,\Lambda}_{r,s} \subset \mathscr{M}$. Thus, if $B \neq 0$, $E=B \times f, f \in \R^3$  is a necessary condition.  We will see next that it is 
also sufficient with the appropriate size normalisations.  We will make use of the formula
\begin{equation} \label{Formula for primitive metrics}
S_{u,B} = \frac{1}{2} (S_{u+\tilde u,B+\tilde B} - S_{\tilde u,\tilde B}) + \frac{1}{2} (S_{u-\tilde u,B-\tilde B} - S_{\tilde u,\tilde B}) \qquad (u,\tilde u,B,\tilde B \in \R^3).
\end{equation}
Finally, recall that in view of Proposition \ref{Proposition on rigidity of good Lamdba}, we are forced to use bad $\Lambda$-segments.
\begin{lem}[Relaxation of the magnetic side] \label{Relaxation of the magnetic side}
$(u, S_{u,B} + S + \Pi I, B, B \times u + B \times v) \in K_{r,s}^{lc,\Lambda}$ whenever $\abs{z^+} < \tau r + \varepsilon_\tau^{(4)}$, $\abs{z^-} < \tau s + \varepsilon_\tau^{(4)}$, $\abs{S} < \varepsilon_\tau^{(4)}$, $\abs{b \times v} < \varepsilon_\tau^{(4)}$ and $\abs{\Pi} < \tau^2 rs + \varepsilon_\tau^{(4)}$.
\end{lem}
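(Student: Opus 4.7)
The strategy is to write $V = \tfrac12(V_+ + V_-)$ as a $\Lambda$-combination of two points each of the form $(u', S_{u',B'} + S' + \Pi I, B', B' \times u')$, so that Lemma \ref{lemma for step (iii)} places them in $K^{lc,\Lambda}_{r,s}$. The case $B\times v = 0$ reduces immediately to Lemma \ref{lemma for step (iii)}, so assume $c\defeq B\times v\neq 0$. Pick mutually orthogonal unit vectors $\hat a,\hat b\in\{c\}^\perp$ with $\hat b\times \hat a=c/|c|$, and set $a\defeq 2\sqrt{|c|}\,\hat a$, $b\defeq 2\sqrt{|c|}\,\hat b$; then $b\times a=4c=4B\times v$ and $a,b\perp B\times v$. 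Define
\[V_\pm\defeq\bigl(u\pm\tfrac{a}{2},\; S_{u\pm a/2,\,B\pm b/2}+(S-S_{a/2,\,b/2})+\Pi I,\; B\pm\tfrac{b}{2},\;(B\pm\tfrac{b}{2})\times(u\pm\tfrac{a}{2})\bigr).\]

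The identity $V = \tfrac12(V_+ + V_-)$ is verified component-wise: the $u$- and $B$-components average to $u$ and $B$; a direct expansion gives that the $E$-components average to $B\times u + \tfrac14 b\times a = B\times u+B\times v$; and formula \eqref{Formula for primitive metrics} yields $\tfrac12(S_{u+a/2,\,B+b/2}+S_{u-a/2,\,B-b/2})=S_{u,B}+S_{a/2,\,b/2}$, so the correction $(S-S_{a/2,b/2})$ absorbs this cross-term and leaves the target $S_{u,B}+S+\Pi I$. For $V_+-V_-\in\Lambda$, take $\xi_x=B\times v$ and $\xi_t=-(B\times v)\cdot u$. Because $b\times a\parallel\xi_x$, we have $\xi_x\cdot a=\xi_x\cdot b=0$, and $\xi_x\cdot B=0$ holds automatically. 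Equation \eqref{Wave cone condition 3b} becomes
\[\xi_t b+\xi_x\times(B\times a+b\times u)=\xi_t b-(\xi_x\cdot B)\,a+(\xi_x\cdot u)\,b=(\xi_t+\xi_x\cdot u)\,b=0.\]
For \eqref{Wave cone condition 2b}, the $(S-S_{a/2,b/2})$ corrections cancel, leaving $S_+-S_-=u\otimes a+a\otimes u-B\otimes b-b\otimes B$; applying this to $\xi_x$ gives $(\xi_x\cdot u)\,a$, whence $\xi_t a+(S_+-S_-)\xi_x=0$.

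To place $V_\pm$ into the hypothesis of Lemma \ref{lemma for step (iii)}, note that $|a|=|b|=2\sqrt{|B\times v|}\le 2\sqrt{\varepsilon_\tau^{(4)}}$, so $|z^\pm_\pm|\le|z^\pm|+2\sqrt{\varepsilon_\tau^{(4)}}<\tau r+\varepsilon_\tau^{(4)}+2\sqrt{\varepsilon_\tau^{(4)}}$ (and similarly for $s$), $|S-S_{a/2,b/2}|\le|S|+|a|^2/4+|b|^2/4\le 3\varepsilon_\tau^{(4)}$, and $|\Pi|<\tau^2 rs+\varepsilon_\tau^{(4)}$. Choosing $\varepsilon_\tau^{(4)}>0$ so that $\varepsilon_\tau^{(4)}+2\sqrt{\varepsilon_\tau^{(4)}}<\varepsilon_\tau^{(3)}$ and $3\varepsilon_\tau^{(4)}<\varepsilon_\tau^{(3)}$ places $V_\pm\in K^{lc,\Lambda}_{r,s}$, and hence $V\in K^{lc,\Lambda}_{r,s}$. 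The main obstacle is that a naive splitting perturbing only $u$ or only $B$ cannot produce a perturbation $B\times v$ in the electric field while preserving the form $E_\pm=B_\pm\times u_\pm$ on the endpoints; the product structure forces $\tilde B\times\tilde u=B\times v$, and the resulting coupled wave-cone equations generically demand the incompatible condition $\xi_x\cdot u=0$ through the cross-term $\xi_x\times(B\times a)=-(\xi_x\cdot B)\,a$. The crucial insight is that the constraint $b\times a\parallel B\times v$ makes the choice $\xi_x\defeq B\times v$ simultaneously orthogonal to $a,b$ and (automatically) to $B$, which is precisely what kills the obstructing term.
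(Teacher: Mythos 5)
Your proof is correct, and the decomposition and the choice of wave normal both work. However, you take a genuinely different route from the paper. The paper normalises $B\cdot v=0$, sets $c=(|B|/|v|)^{1/2}$ so that $|cv|=|c^{-1}B|=|B\times v|^{1/2}$, and writes the target point as the midpoint of the endpoints obtained by perturbing $u$ along $\pm cv$ and $B$ along $\pm c^{-1}B$; the $\Lambda$-direction $\bar V$ is then verified by invoking the reduced wave-cone criterion \eqref{Wave cone condition 4}. You instead introduce two fresh auxiliary directions $a,b$ lying in $(B\times v)^\perp$ with $b\times a=4(B\times v)$, perturb $u$ along $\pm a/2$ and $B$ along $\pm b/2$, and verify \eqref{Wave cone condition 1b}--\eqref{Wave cone condition 3b} directly. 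Both constructions produce the same wave normal $\xi_x$ parallel to $B\times v$, the same size estimate $O(|B\times v|^{1/2})$ on the perturbations, and both reduce the endpoints to Lemma \ref{lemma for step (iii)} via formula \eqref{Formula for primitive metrics}. The paper's choice has the feature that the $B$-perturbation is parallel to $B$, which makes $B_{\bar V}\cdot E_{\bar V}=0$ and the other condition in \eqref{Wave cone condition 4} nearly automatic; yours exploits instead that $B\in\{B\times v\}^\perp=\spann\{a,b\}$ so the cross-term $(\xi_x\cdot B)a$ vanishes. Your version is arguably more self-contained (no appeal to the precomputed condition \eqref{Wave cone condition 4}), while the paper's perturbation directions are read off directly from $v$ and $B$ without introducing an auxiliary orthonormal frame.
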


\begin{proof}
We may assume that $B \times v \neq 0$ and $B \cdot v = 0$. Then $\abs{B \times v} = \abs{B} \abs{v}$. The difficulty is that if $B$ is very small, $v$ can be very large.

We denote $c \defeq (\abs{B}/\abs{v})^{1/2}$ so that 
\begin{equation}\label{small}\abs{c v} = |c^{-1} B| = \abs{B \times v}^{1/2} < |\varepsilon_\tau^{(4)}|^{1/2}.
\end{equation}
 We then use \eqref{Formula for primitive metrics} to show that $(u, B, S_{u,B} + S + \Pi I, B \times u + B \times v)$ is the middle point of a suitable  $\Lambda$ segment. Indeed,
\begin{align*}
  & (u,B,S_{u,B} + S + \Pi I, B \times (u+v)) \\
= & \frac{1}{2} (u+c v,S_{u+c v,(1+c^{-1}) B} - S_{cv,c^{-1}B} + S + \Pi I, B + c^{-1} B, (1+c^{-1}) B \times (u + cv)) \\
+ & \frac{1}{2} (u-c v,S_{u-c v,(1-c^{-1}) B} - S_{cv,c^{-1}B} + S + \Pi I, B - c^{-1} B, (1-c^{-1}) B \times (u - cv)).
\end{align*}
Notice that thanks to \eqref{small} we can apply Lemma~\ref{lemma for step (iii)} to deduce that the endpoints lie in $K_{r,s}^{lc,\Lambda}$. The direction of the segment is 
\[
\bar{V} = \left( 2 c v, 2 \left( u \otimes c v + c v \otimes u - \frac{2}{c} B \otimes B \right), 2 c^{-1} B, 2 \left( B \times c v + c^{-1} B \times u \right) \right),\]
which belongs to $\Lambda$ since  \eqref{Wave cone condition 4} is satisfied.
\end{proof}

The case $B=0$ needs to be dealt with separately, since lying in $\mathscr{M}$ does no longer constrain $E$.
The following lemma proves step (v) and completes the proof of Proposition \ref{Proposition for Hull theorem 1}.

\begin{lem}[The case $B=0$] \label{Lemma for step (v)}
$(u, 0, S_{u,0} + S + \Pi I, E) \in K_{r,s}^{lc,\Lambda}$ whenever $\abs{u} < \tau r + \varepsilon_\tau^{(5)}$, $\abs{u} < \tau s + \varepsilon_\tau^{(5)}$, $\abs{S} < \varepsilon_\tau^{(5)}$, $\abs{E} < \varepsilon_\tau^{(5)}$ and $\abs{\Pi} < \tau^2 rs + \varepsilon_\tau^{(5)}$.
\end{lem}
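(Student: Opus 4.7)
The plan is to realize the target point $V_0 := (u, S_{u,0} + S + \Pi I, 0, E)$ as the midpoint of a single $\Lambda$-segment whose endpoints lie in $K_{r,s}^{lc,\Lambda}$ by Lemma~\ref{Relaxation of the magnetic side}. When $E = 0$, Lemma~\ref{Relaxation of the magnetic side} applies directly with $B = 0$ and $v = 0$, so I would dispose of that trivial case first and assume $E \neq 0$ in what follows.

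In the main case I would pick an auxiliary vector $B \in \R^3$ with $B \perp E$ and $|B|$ small (to be fixed below), and set
\[
V_\pm := \bigl(u \pm B,\; S_{u \pm B, B} + S + \Pi I,\; \pm B,\; E \pm B \times u\bigr).
\]
The polarization identity $S_{u+B,B} + S_{u-B,B} = 2\, u \otimes u = 2\, S_{u,0}$ gives $\tfrac12(V_+ + V_-) = V_0$. Each $V_\pm$ matches the template of Lemma~\ref{Relaxation of the magnetic side}: for $V_+$ take $(u', B') = (u + B, B)$ and $v_+$ with $B \times v_+ = E$; for $V_-$ take $(u', B') = (u - B, -B)$ and $v_-$ with $B \times v_- = -E$. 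Both equations are solvable since $B \perp E$ (concretely $v_\pm = \mp (B \times E)/|B|^2$), and they yield $|B \times v_\pm| = |E|$.

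To check that the segment is in $\Lambda$, I compute
\[
V_+ - V_- = \bigl(2B,\; 2(u \otimes B + B \otimes u),\; 2B,\; 2 B \times u\bigr)
\]
and pick any $\xi_x \in B^\perp \setminus \{0\}$ together with $\xi_t = -u \cdot \xi_x$: conditions \eqref{Wave cone condition 1} and \eqref{Wave cone condition 2} follow from $\xi_x \cdot B = 0$, while \eqref{Wave cone condition 3} uses the identity $\xi_x \times (B \times u) = (\xi_x \cdot u) B$. Finally, the quantitative hypotheses of Lemma~\ref{Relaxation of the magnetic side} on $V_\pm$ reduce to $|u \pm 2B| < \tau r + \varepsilon^{(4)}_\tau$, $|u| < \tau s + \varepsilon^{(4)}_\tau$, $|S| < \varepsilon^{(4)}_\tau$, $|B \times v_\pm| = |E| < \varepsilon^{(4)}_\tau$ and $|\Pi| < \tau^2 rs + \varepsilon^{(4)}_\tau$, all of which can be arranged by taking $\varepsilon^{(5)}_\tau$ small relative to $\varepsilon^{(4)}_\tau$ and, say, $|B| \le (\varepsilon^{(4)}_\tau - \varepsilon^{(5)}_\tau)/4$. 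I do not foresee any serious obstacle; the only delicate point is calibrating $|B|$, which must be small enough to keep $V_\pm$ in the regime of Lemma~\ref{Relaxation of the magnetic side} yet strictly positive so that the equations $B \times v_\pm = \pm E$ are solvable and a nontrivial $\xi_x \in B^\perp$ is available.
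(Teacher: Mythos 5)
Your argument is correct, and it takes a genuinely different route from the paper's. Both proofs write $(u, S_{u,0}+S+\Pi I, 0, E)$ as the midpoint of a single $\Lambda$-segment whose endpoints have nonzero magnetic component so that Lemma~\ref{Relaxation of the magnetic side} applies, but the decompositions differ. The paper factors $E = e\times f$ with $e\perp f$, $|e|=|f|=|E|^{1/2}$, and uses formula~\eqref{Formula for primitive metrics} with $\tilde u = e\times f$ and $\tilde B = e$; the velocity and magnetic perturbations are \emph{orthogonal} directions, both tied to $E$ through the square-root factoring, and the resulting direction is $\bar V = (2e\times f, 2(u\otimes e\times f + e\times f\otimes u), 2e, 2e\times(u+e\times f))$, whose membership in $\Lambda$ is checked via~\eqref{Wave cone condition 4} (since $\bar B\times\bar u = -4|e|^2 f\neq 0$). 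You instead pick a free small $B\perp E$ and perturb velocity and magnetic field in the \emph{same} direction $B$, with the polarization $S_{u+B,B}+S_{u-B,B}=2S_{u,0}$; your direction $(2B, 2(u\otimes B+B\otimes u), 2B, 2B\times u)$ has $\bar B\times\bar u = 0$, so it must be checked against~\eqref{Wave cone condition 1}--\eqref{Wave cone condition 3} directly with $\xi_x\in B^\perp\setminus\{0\}$, $\xi_t = -u\cdot\xi_x$, which you do correctly. Your version buys a cleaner midpoint computation (the $E$-components of the endpoints are simply $E\pm B\times u$, versus the paper's $e\times(u+e\times f+f)$ and $-e\times(u-e\times f+2e\times f - f)$), avoids the square-root decomposition of $E$, and decouples the perturbation scale from $|E|$ at the price of one extra free parameter to calibrate. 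The paper's version has the minor advantage that $|e\times f|=|E|$ is automatically of the correct order, removing the need to adjust $|B|$. Both are valid; yours is arguably more elementary.
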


\begin{proof}
We choose orthogonal $e,f$ such that $E=e\times f$, $\abs{e} = \abs{f} = \abs{E}^{1/2} < (\varepsilon_\tau^{(5)})^{1/2}$. Using \eqref{Formula for primitive metrics}, we write
\begin{align*}
  & (u,S_{u,0} + S + \Pi I, 0, e \times f) \\
= & \frac{1}{2} (u + e \times f, S_{u+e \times f,e} - S_{e \times f, e}+ S + \Pi I, e, e \times (u + e \times f + f) \\
+ & \frac{1}{2} (u - e \times f, S_{u-e \times f, -e} - S_{e \times f, e} + S + \Pi I, - e, -e \times (u - e \times f + (2 e \times f - f)).
\end{align*}
By Lemma \ref{Relaxation of the magnetic side} the endpoints belong to $K_{r,s}^{lc,\Lambda}$. Now $\bar{V} = (2 e \times f, 2 (u \otimes e \times f + e \times f \otimes u), 2 e, 2 e \times (u + e \times f)) \in \Lambda$ since \eqref{Wave cone condition 4} is satisfied.
\end{proof}

\subsection{The proof of Proposition \ref{Proposition for Hull theorem 3}}
Recall that Proposition \ref{Proposition for Hull theorem 3} states the inclusion $\U_{r,s} \subset \cup_{\tau_0 < \tau < 1} K_{\tau r, \tau s}^{lc,\Lambda}$ and completes the proof of Theorem \ref{Hull theorem}.

\begin{proof}[Proof of Proposition \ref{Proposition for Hull theorem 3}]
Let $V \in \operatorname{int}_{\mathscr{M}}(K_{r,s}^{lc,\Lambda})$ and $0 < \tau_0 < 1$. By relative openness of $\operatorname{int}_{\mathscr{M}}(K_{r,s}^{lc,\Lambda})$, we may choose $\mu$ such that $\tau_0 < \sqrt{\mu} < 1$ and $V/\mu \in K_{r,s}^{lc,\Lambda}$. Now $V \in (\mu K_{r,s})^{lc,\Lambda}$ since the conditions $\bar{W} \in \Lambda$ and $\mu \bar{W} \in \Lambda$ are equivalent for all $\bar{W} \in \R^{15}$. It thus suffices to show that $\mu K_{r,s} \subset K_{\sqrt{ \mu} r, \sqrt{\mu} s}^{lc,\Lambda}$.

We use Els\"{a}sser variables. When $(\mu z^+, \mu z^-, \mu z^+ \otimes z^- + \mu \Pi I) \in \mu K_{r,s}$, we note that $\sqrt{\mu} \in (\mu,1)$ and write
\begin{align*}
     (\mu z^+, \mu z^-, \mu z^+ \otimes z^- + \Pi I)
&=   \lambda (\sqrt{\mu} z^+, \sqrt{\mu} z^-, \mu z^+ \otimes z^- + \Pi I) \\
&+   (1-\lambda) (-\sqrt{\mu} z^+, -\sqrt{\mu} z^-, \mu z^+ \otimes z^- + \Pi I) \\
&\in K_{\sqrt{ \mu} r, \sqrt{\mu} s}^{1,\Lambda}
\end{align*}
for $2\lambda-1 = \sqrt{\mu} \in (0,1)$; here $\bar{V} = (2 \sqrt{\mu} z^+, 2 \sqrt{\mu} z^-, 0) \in \Lambda$. Hence $\mu K_{r,s} \subset K_{\sqrt\mu r, \sqrt\mu s}^{1,\Lambda}$.
\end{proof}

\section{The proof of Theorem \ref{MHD 3D theorem}} \label{The proof of MHD 3D theorem}
This chapter is dedicated to proving Theorem \ref{MHD 3D theorem}. In \textsection \ref{Restricted subsolutions} we define the set of subsolutions that we use in the proof, and the main steps of the proof are listed in \textsection \ref{The main lemmas}. The proof itself is carried out in the rest of the chapter.

\subsection{Restricted subsolutions} \label{Restricted subsolutions}
We intend to prove Theorem \ref{MHD 3D theorem} by using subsolutions that take values in $\U_{r,s}$ and whose $B$ and $E$ components arise via $P_B$ and $P_E$. For this, recall the notations
\[W = (u,S,d\varphi,d\psi), \qquad V = p(W) = (u,S,d\varphi \wedge d \psi).\]
Fix a non-empty bounded domain $\Omega \subset \R^3 \times \R$, and let $r,s > 0$, $r \neq s$.

\begin{defin}
The set of \emph{restricted subsolutions} is defined as
\begin{align*}
        X_0
\defeq & \{V = (u,S,\omega) \in C_c^\infty(\R^4,\R^{15}) \colon \text{there exists } \varphi, \psi \in C_c^\infty(\R^4) \text{ such that } \\
& \omega = d\varphi \wedge d \psi, \, \mathcal{L}(V) = 0, \; \supp(u,S,\varphi,\psi) \subset \Omega \text{ and } V(x,t) \in \U_{r,s} \forall (x,t) \in \R^4\}.
\end{align*}
We denote by $X$ the weak sequential closure of $X_0$ in $L^2(\R^4; \overline{\operatorname{co}}(K_{r,s}))$.
\end{defin}

Now $X \ni \{0\}$ is a compact metrisable space, and we denote a metric by $d_X$.

\subsection{The main steps of the proof} \label{The main lemmas}
Following~\cite{DLS09}, our main aim is to prove Proposition \ref{Main proposition} below. Once Proposition \ref{Main proposition} is proved, Theorem \ref{MHD 3D theorem} follows rather easily in \textsection \ref{Completion of the proof of MHD 3D theorem}.

\begin{prop} \label{Main proposition}
There exists $C = C_{r,s} > 0$ with the following property. If $V = (u,S,d \varphi \wedge d \psi) \in X_0$, then there exist $V_\ell = (u_\ell, S_\ell, d \varphi_\ell \wedge d \psi_\ell) \in X_0$ such that $d_X(V_\ell,V_0) \to 0$ and
\begin{align*}
&\int_\Omega (\abs{u_\ell(x,t)}^2 + \abs{B_\ell(x,t)}^2 - \abs{u(x,t)}^2 - \abs{B(x,t)}^2) \, dx \, dt \\
\ge \; & C \int_\Omega \left( \frac{r^2+s^2}{2} - \abs{u(x,t)}^2 - \abs{B(x,t)}^2 \right) \, dx \, dt.
\end{align*}
\end{prop}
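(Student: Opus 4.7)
This is the quantitative perturbation step that feeds the Baire-category scheme of \cite{DLS09}. My plan is to add to $V$ many disjointly-supported, localised Clebsch plane waves, produced by Proposition~\ref{Proposition on existence of potentials}, each of which moves $V$ toward $K_{r,s}$ and thereby increases $|u|^2+|B|^2$ by an amount comparable to the pointwise energy gap $\tfrac{r^2+s^2}{2}-|u|^2-|B|^2$.

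\medskip

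The main ingredient is a pointwise geometric lemma: there exist constants $c=c_{r,s}\in(0,1/2)$ and $C'=C'_{r,s}>0$ such that at every $V_0\in\U_{r,s}$ one can find a $\Lambda_g$-segment $[V_0-(1-\lambda)\bar V,V_0+\lambda\bar V]\subset\U_{r,s}$ with $\lambda\in[c,1-c]$ and
\[
\lambda(1-\lambda)\bigl(|\bar u|^2+|\bar B|^2\bigr)\ \ge\ C'\Bigl(\tfrac{r^2+s^2}{2}-|u_0|^2-|B_0|^2\Bigr).
\]
Theorem~\ref{Hull theorem} places $V_0$ in $K^{lc,\Lambda}_{\tau r,\tau s}$ for some $\tau<1$. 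In Els\"asser coordinates the energy gap equals $\tfrac14[(r^2-|z_0^+|^2)+(s^2-|z_0^-|^2)]$, and the explicit $\Lambda$-splitting of Lemma~\ref{Lemma for step (i)} along $z_0^{\pm}/|z_0^\pm|$ produces a $\Lambda$-direction $\bar V$ whose $(u,B)$-part squared dominates this gap. Proposition~\ref{Theorem on hulls of relatively open sets}, combined with the small-perturbation mechanism used inside the proof of Proposition~\ref{Lemma on laminates of relatively open sets}, then lets me upgrade this $\Lambda$-segment to a $\Lambda_g$-segment at the cost of only a universal fraction of its length.

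\medskip

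Granting this lemma, the rest is a standard covering-and-assembly step. Since $V\in C_c^\infty$ takes values in the relatively open set $\U_{r,s}$, I cover $\Omega$ up to a set of measure $\eta$ by finitely many disjoint open cubes $Q_k\subset\Omega$ on each of which $V$ is uniformly close to a constant value $V_k\in\U_{r,s}$. Apply the geometric lemma at $V_k$ and invoke Proposition~\ref{Proposition on existence of potentials} on $Q_k$ with a tolerance $\varepsilon_k$ small enough (using relative openness of $\U_{r,s}$) to keep $V_\ell$ valued in $\U_{r,s}$; the resulting perturbation $W_\ell^{(k)}-W_k\in C_c^\infty(Q_k;\R^{17})$ preserves the Clebsch factorisation, so that on the disjoint supports the sum $W_\ell\defeq W+\sum_k(W_\ell^{(k)}-W_k)$ yields $V_\ell\defeq p(W_\ell)\in X_0$, with $V_\ell\rightharpoonup V$ in $L^2$ (hence $d_X(V_\ell,V)\to 0$) by Proposition~\ref{Proposition on existence of potentials}(iv). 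For the energy gain, Proposition~\ref{Proposition on existence of potentials}(iii) says that $V_\ell$ equals the two endpoint values on subsets of $Q_k$ of measures $\ge(1-\eta)\lambda_k|Q_k|$ and $\ge(1-\eta)(1-\lambda_k)|Q_k|$ respectively; the elementary identity $\lambda a^2+(1-\lambda)b^2-[\lambda a+(1-\lambda)b]^2=\lambda(1-\lambda)(a-b)^2$ applied to the $(u,B)$-components, together with the geometric lemma, yields
\[
\int_{Q_k}\bigl(|u_\ell|^2+|B_\ell|^2-|u|^2-|B|^2\bigr)\ \ge\ (1-\eta)C'|Q_k|\Bigl(\tfrac{r^2+s^2}{2}-|u|^2-|B|^2\Bigr)-o_\ell(1),
\]
and summing over $k$ and letting $\eta\to 0$ completes the proof with $C=C'/2$.

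\medskip

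The technical crux is the pointwise geometric lemma. Because of the algebraic constraint $B\cdot E=0$ cutting out $\mathscr M$, the direct Els\"asser splitting generically lies outside $\Lambda_g$, so Proposition~\ref{Theorem on hulls of relatively open sets} is needed to upgrade it while retaining a quadratic lower bound on $|\bar V|$. Checking that this upgrade preserves the bound---and in particular handling the degenerate case $\omega_0=0$, where the Ansatz of Proposition~\ref{Proposition on existence of potentials} splits into two subcases with separate factors---is where the main work goes.
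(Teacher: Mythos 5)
Your plan diverges from the paper's argument at its crux and contains a genuine gap. The ``pointwise geometric lemma'' you posit---that at every $V_0\in\U_{r,s}$ there exists a \emph{single} $\Lambda_g$-segment through $V_0$ inside $\U_{r,s}$ whose endpoints gain an amount of $|u|^2+|B|^2$ comparable to the gap $\tfrac{r^2+s^2}{2}-|u_0|^2-|B_0|^2$---is not proven, and the mechanism you propose for establishing it does not work. Lemma~\ref{Lemma for step (i)} normalises $|z^+|$ and $|z^-|$ in \emph{two} successive splits; neither single split controls the whole gap. For instance, if $|z_0^+|\approx r$ but $|z_0^-|\ll s$, the split along $z^+$ gives essentially no energy gain, while the split along $z^-$ is not a $\Lambda$-direction through the original point $V_0$; a single segment cannot increase both Els\"asser lengths while remaining in $\mathscr M$. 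Furthermore, the ``upgrade'' you invoke from Proposition~\ref{Theorem on hulls of relatively open sets} does not replace a $\Lambda$-segment by a nearby $\Lambda_g$-segment of comparable length: the proofs of Propositions~\ref{Lemma on laminates of relatively open sets} and \ref{Theorem on hulls of relatively open sets} replace a bad segment by a short \emph{tree} of good segments, so the output is a finite-order laminate, not a single segment.

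The paper's proof is structured precisely to avoid needing a single-segment perturbation property. It uses Proposition~\ref{Proposition on existence of finite-order good laminates} to realise $V_0$ as the barycentre of a finite-order good laminate $\nu\in\mathcal L_g(\mathscr O_\tau)$ with $\tau$ near $1$, with the quantitative energy estimate \eqref{Estimate on distance from K} holding at the laminate's \emph{leaves} rather than across one split. Proposition~\ref{Proposition on approximation of laminates} then builds the oscillating map by iterating Proposition~\ref{Proposition on existence of potentials} down the tree, and the Vitali cover of Lemma~\ref{Discretisation lemma} assembles these over $\Omega$. One more step you gloss over is necessary: before discretising, the paper applies Lemma~\ref{Lemma on modifying elements of X0} to shrink the set where $d\varphi\wedge d\psi=0$ but $(d\varphi,d\psi)\neq(0,0)$, since Proposition~\ref{Proposition on existence of potentials} requires $v_0=w_0=0$ in the degenerate case; you note this difficulty but do not resolve it. In short, your covering-and-assembly skeleton matches the paper, but the engine at each cube (single $\Lambda_g$-segment with quadratic gain) is replaced in the paper by a multi-step laminate, and this replacement is essential.
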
 

For the proof of Proposition \ref{Main proposition} we need a so-called \emph{perturbation property}, formulated in our setting in Proposition \ref{Proposition on approximation of laminates}. To motivate the formulation of Proposition \ref{Proposition on approximation of laminates}, we note that Theorem \ref{Hull theorem} implies the following proposition where we choose any $\varepsilon_\tau = \varepsilon_{\tau,r,s}$ such that
\[\mathscr{O}_\tau \defeq B_{\mathscr{M}}(K_{\tau r, \tau s},\varepsilon_\tau) \subset \U_{r,s};\]
recall from Theorem \ref{Hull theorem} that for any $\tau_0 \in (0,1)$ we have
\[\U_{r,s} = \bigcup_{\tau_0 < \tau < 1} \mathscr{O}_\tau^{lc,\Lambda_g}.\]

\begin{prop} \label{Proposition on existence of finite-order good laminates}
Let $V_0 \in \U_{r,s}$. Then for every large enough $\tau \in (0,1)$ there exists
\[\nu = \sum_{\mathbf{j} \in \{1,2\}^N} \mu_\mathbf{j} \delta_{V_\mathbf{j}} \in \mathcal{L}_g(\mathscr{O}_\tau)\]
with barycentre $\bar{\nu} = V_0$ and $[V_{\mathbf{j}',1},V_{\mathbf{j}',2}] \subset \U_{r,s}$ for all $\mathbf{j}' \in \{1,2\}^k$, $1 \le k \le N-1$. Furthermore, for each $V_\mathbf{j} = (u_\mathbf{j},S_\mathbf{j},v_\mathbf{j} \wedge w_\mathbf{j})$, $\mathbf{j} \in \{1,2\}^N$, we have
\begin{equation} \label{Estimate on distance from K}
\frac{r^2 + s^2}{2} - \abs{u_0}^2 - \abs{B_0}^2 \le 2 (\abs{u_\mathbf{j}}^2 + \abs{B_\mathbf{j}}^2 - \abs{u_0}^2 - \abs{B_0}^2).
\end{equation}
\end{prop}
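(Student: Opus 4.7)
Fix $V_0 \in \U_{r,s}$. My plan splits into three stages: choose $\tau$ close to $1$ so that $V_0 \in \mathscr{O}_\tau^{lc, \Lambda_g}$; turn this membership into an explicit finite-order laminate; and verify the remaining two requirements. For the first stage, Theorem \ref{Hull theorem} gives $\U_{r,s} = \bigcup_{\tau_0 < \tau < 1} \mathscr{O}_\tau^{lc, \Lambda_g}$ for every $\tau_0 \in (0,1)$, so I can select $\tau$ arbitrarily close to $1$ with $V_0 \in \mathscr{O}_\tau^{lc, \Lambda_g}$. I will also need the strict inequality $|u_0|^2 + |B_0|^2 < (r^2+s^2)/2$: iterated triangle inequalities along $\Lambda$-laminations propagate the bounds $|u \pm B| \le r, s$ from $K_{r,s}$ to all of $K_{r,s}^{lc, \Lambda}$, and since $V_0$ lies in the \emph{relative interior} while perturbations of $u_0$ alone preserve the constraint $B \cdot E = 0$ defining $\mathscr{M}$, these bounds must be strict at $V_0$ (otherwise an infinitesimal push of $u_0$ in the direction $u_0 \pm B_0$ would exit $K_{r,s}^{lc, \Lambda}$).

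\textbf{Constructing the laminate.} Fixing the smallest $k \ge 0$ with $V_0 \in \mathscr{O}_\tau^{k, \Lambda_g}$ and unwinding the inductive definition exactly as in Remark \ref{Remark on lamination convex hull} produces a finite-order laminate
\[
\nu = \sum_{\mathbf{j} \in \{1,2\}^k} \mu_{\mathbf{j}} \, \delta_{V_{\mathbf{j}}}, \qquad \operatorname{supp} \nu \subset \mathscr{O}_\tau, \qquad \bar{\nu} = V_0,
\]
in which every splitting is performed along a $\Lambda_g$-segment; by Definition \ref{Definition of good laminates}, $\nu \in \mathcal{L}_g(\mathscr{O}_\tau)$. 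Each intermediate node $V_{\mathbf{j}'}$ and the entire segment $[V_{\mathbf{j}',1}, V_{\mathbf{j}',2}]$ joining its two children then lies in $\mathscr{O}_\tau^{lc, \Lambda_g}$, which by Proposition \ref{Lemma on laminates of relatively open sets} is relatively open in $\mathscr{M}$. From $\mathscr{O}_\tau \subset \U_{r,s} \subset K_{r,s}^{lc, \Lambda}$, iterated $\Lambda_g$-lamination yields $\mathscr{O}_\tau^{lc, \Lambda_g} \subset K_{r,s}^{lc, \Lambda}$, and relative openness then forces $\mathscr{O}_\tau^{lc, \Lambda_g} \subset \operatorname{int}_{\mathscr{M}}(K_{r,s}^{lc, \Lambda}) = \U_{r,s}$. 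Hence every intermediate segment is contained in $\U_{r,s}$, as required.

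\textbf{Energy estimate and the hard part.} I would rewrite \eqref{Estimate on distance from K} as $|u_0|^2 + |B_0|^2 \le 2(|u_{\mathbf{j}}|^2 + |B_{\mathbf{j}}|^2) - (r^2+s^2)/2$. Since $V_{\mathbf{j}} \in \mathscr{O}_\tau = B_{\mathscr{M}}(K_{\tau r, \tau s}, \varepsilon_\tau)$,
\[
|u_{\mathbf{j}}|^2 + |B_{\mathbf{j}}|^2 = \tfrac{1}{2} \bigl(|u_{\mathbf{j}} + B_{\mathbf{j}}|^2 + |u_{\mathbf{j}} - B_{\mathbf{j}}|^2\bigr) \ge \tfrac{1}{2} \bigl((\tau r - \varepsilon_\tau)^2 + (\tau s - \varepsilon_\tau)^2\bigr),
\]
so the right-hand side of the rewritten inequality is at least $\tau^2(r^2+s^2) - O(\varepsilon_\tau) - (r^2+s^2)/2$, which tends to $(r^2+s^2)/2$ as $\tau \nearrow 1$. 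The strict bound from the first stage leaves a positive gap $(r^2+s^2)/2 - (|u_0|^2 + |B_0|^2) > 0$, and choosing $\tau$ close enough to $1$ (which is permitted by Theorem \ref{Hull theorem}) closes it. The hard part is precisely this last step: the estimate is tight in the limit $\tau \to 1$, and it is rescued only because $V_0$ sits in the \emph{relatively open} set $\U_{r,s}$ and not merely in the closed hull $K_{r,s}^{lc, \Lambda}$; openness of $\U_{r,s}$ is therefore what ties the entire argument together.
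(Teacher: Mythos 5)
Your proof is correct and follows the same approach as the paper's own (very terse) argument beneath the proposition: select $\tau$ close to $1$ via Theorem \ref{Hull theorem}, unwind the inductive definition of $\mathscr{O}_\tau^{lc,\Lambda_g}$ into a good finite-order laminate, and derive the energy estimate from the strict inequality $|u_0|^2+|B_0|^2 < (r^2+s^2)/2$ together with $\operatorname{supp}(\nu) \subset \mathscr{O}_\tau$.

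One step in your argument deserves a bit more care. You assert that the \emph{entire} segment $[V_{\mathbf{j}',1},V_{\mathbf{j}',2}]$ lies in $\mathscr{O}_\tau^{lc,\Lambda_g}$, but the inductive definition of the $\Lambda_g$-hull only places the specific barycentre $V_{\mathbf{j}'}$ there; the notion of good $\Lambda$-segment is anchored to a barycentre and a splitting coefficient $\lambda$ (see condition \eqref{Good Lambda condition 3}, which explicitly involves $\lambda$), so it is not immediate that every other point on the segment, viewed as the barycentre of a reparameterised splitting, again witnesses a good segment. The cleanest fix is to invoke Proposition \ref{Theorem on hulls of relatively open sets}: since $\mathscr{O}_\tau$ is bounded and relatively open in $\mathscr{M}$, one has $\mathscr{O}_\tau^{lc,\Lambda_g}=\mathscr{O}_\tau^{lc,\Lambda}$, and the plain $\Lambda$-hull is by definition closed under convex combinations of pairs lying on a $\Lambda$-segment; hence the full segment is in $\mathscr{O}_\tau^{lc,\Lambda}=\mathscr{O}_\tau^{lc,\Lambda_g}$, which is relatively open in $\mathscr{M}$ (Proposition \ref{Lemma on laminates of relatively open sets}) and contained in $K_{r,s}^{lc,\Lambda}$, so contained in $\U_{r,s}$ as you conclude. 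With that citation made explicit, the argument is complete and matches the paper.
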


Indeed, if $V = (u,S,B,E) \in K_{r,s}$, then $\abs{u}^2 + \abs{B}^2 = (r^2 + s^2)/2$ whereas, since $\supp(\nu) \subset \mathscr{O}_\tau$, $\abs{u_\mathbf{j}}^2 + \abs{B_\mathbf{j}}^2 \geq \tau^2 (r^2+s^2)/2 - \varepsilon_\tau$. Therefore, \eqref{Estimate on distance from K} follows by choosing $\tau \in (0,1)$ large enough.

Whereas Proposition \ref{Proposition on existence of potentials} says, roughly speaking, that every good $\Lambda$-segment can be approximated by oscillating mappings with certain properties, Proposition \ref{Proposition on approximation of laminates} makes an analogous claim about good laminates.

\begin{prop} \label{Proposition on approximation of laminates}
Let $Q \subset \R^4$ be a cube, and let $V_0 = p(W_0) \in \U_{r,s}$. If $\omega_0 = v_0 \wedge w_0 = 0$, then assume that $v_0 = w_0 = 0$. Choose $\nu \in \mathcal{L}_g(\mathscr{O}_\tau)$ with $\bar{\nu} = V_0$ via Proposition \ref{Proposition on existence of finite-order good laminates}.

For every $\varepsilon > 0$ there exist
\[W_\ell \defeq W_0 + (\bar{u}_\ell,\bar{S}_\ell,d \bar{\varphi}_\ell, d \bar{\psi}_\ell) \in W_0 + C_c^\infty(Q;\R^{17})\]
with the following properties:
\begin{enumerate}[\upshape (i)]
\item $\mathcal{L}(V_\ell) = 0$ and $V_\ell(x,t) \in \U_{r,s}$ for all $(x,t) \in \Omega$.

\item There exist pairwise disjoint open subsets $A_\mathbf{j} \subset Q$ with $\abs{\abs{A_\mathbf{j}}-\mu_\mathbf{j}} < \varepsilon$ such that
\[V_\ell(x,t) = V_\mathbf{j} \; \text{ for all } \mathbf{j} \in \{1,2\}^N \text{ and } (x,t) \in A_\mathbf{j}\]
and $d\bar{\varphi}_\ell$ and $d\bar{\psi}_\ell$ are locally constant in $A_\mathbf{j}$.

\item For every $(x,t) \in Q$ there exist $\mathbf{j}' \in \{1,2\}^k$ and $\tilde{W} = \tilde{W}(x,t) \in \R^{17}$ such that
\[p(\tilde{W}) \in [V_{\mathbf{j}',1}, V_{\mathbf{j}',2}], \qquad
 |W_\ell(x,t) - \tilde{W}| < \varepsilon, \qquad |V_\ell(x,t) - p(\tilde{W})| < \varepsilon.\]

\item $V_\ell - V_0 \rightharpoonup 0$ in $L^2(Q;\R^{15})$.
\end{enumerate}
\end{prop}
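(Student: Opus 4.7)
I will prove the proposition by induction on the order $N$ of the laminate $\nu$, with Proposition \ref{Proposition on existence of potentials} providing the elementary step at each level.

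For the base case $N = 1$, the laminate is $\nu = \mu_1 \delta_{V_1} + \mu_2 \delta_{V_2}$ with $[V_1, V_2]$ a $\Lambda_g$-segment and barycentre $V_0 = p(W_0)$. Setting $\lambda = \mu_2$ and $\bar{V} = V_1 - V_2$, one checks $V_1 = V_0 + \lambda \bar{V}$ and $V_2 = V_0 - (1 - \lambda) \bar{V}$, so Proposition \ref{Proposition on existence of potentials} applied to $W_0$ and this $\Lambda_g$-segment directly yields $W_\ell$ with all four properties. The compatibility hypothesis ``$\omega_0 = 0 \Rightarrow v_0 = w_0 = 0$'' is precisely our assumption on $W_0$, and the constraint $V_\ell(x,t) \in \U_{r,s}$ is absorbed into a small enough tolerance $\varepsilon$, using the relative openness of $\U_{r,s}$ in $\mathscr{M}$ together with the inclusion $[V_1, V_2] \subset \mathscr{O}_\tau^{lc,\Lambda_g} \subset \U_{r,s}$ supplied by Proposition \ref{Proposition on existence of finite-order good laminates}.

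For the inductive step, assume the claim for orders less than $N$ and decompose $\nu = \mu_1' \nu_1 + \mu_2' \nu_2$, where each $\nu_j$ is a good finite-order laminate of order at most $N-1$ supported on $\{V_\mathbf{j} \colon j_1 = j\}$ with barycentre $V_j'$, and $[V_1', V_2']$ is a $\Lambda_g$-segment with $V_0 = \mu_1' V_1' + \mu_2' V_2'$. First apply Proposition \ref{Proposition on existence of potentials} to $W_0$ and $[V_1', V_2']$, producing $W_\ell^{(0)} \in W_0 + C_c^\infty(Q;\R^{17})$ together with disjoint open sets $A_1', A_2' \subset Q$ on which $W_\ell^{(0)}$ is locally constant with value $W_j' = (u_j', S_j', v_j', w_j')$ satisfying $p(W_j') = V_j'$, and with $|A_j'|$ approximating $\mu_j' |Q|$ to any prescribed accuracy. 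The dichotomy from Proposition \ref{Proposition on existence of potentials}(iii) -- either $v_j' = w_j' = 0$ or $v_j' \wedge w_j' \neq 0$ -- is precisely the compatibility hypothesis needed to reapply the induction to the base state $W_j'$. For each $j$, cover $A_j'$ up to arbitrarily small measure by pairwise disjoint cubes $\{Q_j^k\}_k$, and in each $Q_j^k$ apply the inductive hypothesis with base state $W_j'$, domain $Q_j^k$, and laminate $\nu_j$, with tolerance $\varepsilon' \ll \varepsilon$. Summing the compactly supported sub-perturbations into $W_\ell^{(0)}$ yields $W_\ell \in W_0 + C_c^\infty(Q;\R^{17})$.

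Claims (i)--(iv) then follow by routine verification: $\mathcal{L}(V_\ell) = 0$ because each sub-construction preserves the conservation laws, and $V_\ell(x,t) \in \U_{r,s}$ by relative openness together with the cumulative tolerance budget; the disjoint sets $A_\mathbf{j}$ in (ii) arise as the sub-level sets produced inside each $Q_j^k$, with measure control matching $\mu_\mathbf{j}$ via $|A_j'| \approx \mu_j' |Q|$ and the inductive accounting inside the cubes; for (iii), points in a sub-cube $Q_j^k$ inherit $\tilde{W}$ from the inductive hypothesis (landing on a $\Lambda_g$-segment of $\nu_j$ indexed by $\mathbf{j}' \in \{1,2\}^k$ with $k \geq 1$), points in $A_j' \setminus \bigcup_k Q_j^k$ satisfy $V_\ell = V_j' \in [V_1', V_2']$, and points in $Q \setminus (A_1' \cup A_2')$ are handled by Proposition \ref{Proposition on existence of potentials}(ii) applied at the top level; finally (iv) follows by iterating weak convergence in $L^2$ at each level. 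The main obstacle is the nonlinearity of the map $p(W) = (u, S, v \wedge w)$, which forces us to carry the Clebsch factors $(v,w)$ through the induction rather than just the bivector $v \wedge w$; a naive induction would collapse at the second step if some $W_j'$ had $v_j' \wedge w_j' = 0$ with $v_j', w_j' \neq 0$, since Proposition \ref{Proposition on existence of potentials} would not apply, and the dichotomy built into that proposition is precisely what prevents this pathology and keeps the induction self-consistent.
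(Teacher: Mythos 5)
Your proposal is correct and follows essentially the same line of argument as the paper: induction on the order $N$ of the laminate, applying Proposition \ref{Proposition on existence of potentials} as the elementary step, covering the locally constant sets by cubes and re-perturbing there, with the dichotomy in Proposition \ref{Proposition on existence of potentials}(iii) (either $v\wedge w\neq 0$ or $v=w=0$) being precisely what keeps the recursion well-posed, and a diagonal argument at the end. The only cosmetic difference is that you phrase the induction "top-down" (split $\nu$ at its root and recurse into $\nu_1,\nu_2$ inside the sets $A_1',A_2'$), whereas the paper iterates "forward" in depth $k$, maintaining at each stage the sets $A_\mathbf{j}$ for $\mathbf{j}\in\{1,2\}^k$ and refining each of them; these produce the same construction.
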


Condition (iii) says, in particular, that at every $(x,t) \in \Omega$, $V_\ell(x,t)$ is close to one of the $\Lambda_g$-segments $[V_{\mathbf{j}',1},V_{\mathbf{j}',2}]$, where $\mathbf{j}' \in \{1,2\}^k$, $1 \le k \le N-1$. We will also need the estimate on $W_\ell(x,t)$. Proposition \ref{Proposition on approximation of laminates} is proved by a standard induction via Proposition \ref{Proposition on existence of potentials}; we sketch the main ideas.

\begin{proof}
The proof follows by iteratively modifying the sequence at the sets where it is locally affine (via Proposition \ref{Proposition on existence of potentials}) and using a diagonal argument. Namely, if $N=1$, the result follows from Proposition \ref{Proposition on existence of potentials}.  Suppose now that $1 \le k \le N-1$ and that we have costructed a sequence of mappings $W_{\ell_k} = (u_{\ell_k}, S_{\ell_k}, d\varphi_{\ell_k}, d\psi_{\ell_k})$ which satisfies (i), (iii) and (iv) and furthermore (ii) holds with the condition $\mathbf{j} \in \{1,2\}^N$ replaced by $\mathbf{j} \in \{1,2\}^k$.

Fix $\mathbf{j} \in \{1,2\}^k$. We cover $A_\mathbf{j}$ by disjoint cubes up to a set of small measure and modify $W_{\ell_k}$ in each cube via Proposition \ref{Proposition on existence of potentials}. This gives rise to a new sequence which we again modify at the sets $A_\mathbf{j}$, $\mathbf{j} \in \{1,2\}^{k+1}$, where it is locally affine. Note that we can use Proposition \ref{Proposition on existence of potentials} iteratively because in each $A_{\mathbf{j}}$, claim (iii) of Proposition \ref{Proposition on existence of potentials} implies that either $d\varphi_{k_\ell} \wedge d\psi_{k_\ell} \neq 0$ or $d\varphi_{k_\ell} = d\psi_{k_\ell} = 0$.  Finally a standard diagonal argument provides the norm bounds.
\end{proof}

\subsection{Modifications at the set where $d\varphi(x,t) \wedge d\psi(x,t) = 0$ }
The following issue needs to be addressed in the proof of Proposition \ref{Main proposition}: on one hand, the mapping $V = (u,S,d\varphi \wedge d\psi) \in X_0$ can have a large set where $d\varphi(x,t) \wedge d\psi(x,t) = 0$ but $(d\varphi(x,t),d\psi(x,t)) \neq 0$, and on the other hand, in Proposition \ref{Proposition on existence of potentials}, in the case $\omega_0 = v_0 \wedge w_0 = 0$, we only constructed potentials when $v_0 = w_0 = 0$. We therefore modify $W$ around points $(x,t) \in \Omega$ where $d\varphi(x,t) \wedge d\psi(x,t) = 0$ but $(d\varphi(x,t),d\psi(x,t)) \neq 0$ making $W$ look essentially constant there. 

\begin{lem} \label{Lemma on modifying elements of X0}
Suppose $V \in X_0$, and let $\varepsilon > 0$. Then there exists $\tilde{V} = (\tilde{u},\tilde{S},d\tilde{\varphi} \wedge d\tilde{\psi}) \in X_0$ such that $\|V-\tilde{V}\|_{L^\infty} < \varepsilon$ and
\begin{align*}
& |\{(x,t) \in \Omega \colon d\tilde{\varphi}(x,t) \wedge d\tilde{\psi}(x,t) \neq 0\} \cup \operatorname{int}(\{(x,t) \in \Omega \colon d\tilde{\varphi}(x,t) = d\tilde{\psi}(x,t) = 0\})| \\
> & (1-\varepsilon) \abs{\Omega}.
\end{align*}
\end{lem}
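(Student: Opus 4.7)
The plan is to isolate the \emph{bad set}
\[
B \defeq \{(x,t) \in \Omega: d\varphi\wedge d\psi(x,t) = 0 \text{ and } (d\varphi, d\psi)(x,t) \neq 0\}
\]
(contained in $\Omega_3 \defeq \Omega\setminus(\{\omega \neq 0\}\cup\operatorname{int}\{d\varphi = d\psi = 0\}) \subset \{\omega = 0\}$) inside finitely many disjoint open cubes $Q_1,\dots,Q_N \subset \Omega$, and in each cube to replace $(\varphi,\psi)$ by Clebsch potentials that become locally constant on a concentric subcube $Q_i' \subset\subset Q_i$ via a \emph{gauge-adjusted multiplicative cutoff}, leaving $u$ and $S$ untouched. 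Since $\omega$ is continuous and vanishes on $\Omega_3$, the cubes can be chosen so that $|\omega|$ is uniformly tiny on each. Membership $\tilde V \in X_0$ will then be automatic: $u, S$ are unchanged, so the fluid part of $\mathcal{L}(\tilde V) = 0$ is preserved; since $\tilde\omega = d\tilde\varphi \wedge d\tilde\psi$ is exact and simple, both the Maxwell part of $\mathcal{L}(\tilde V) = 0$ and the constraint $\tilde V \in \mathscr{M}$ hold automatically; and the openness of $\U_{r,s}$ in $\mathscr{M}$ gives $\tilde V(x,t) \in \U_{r,s}$ pointwise as soon as $\|\tilde V - V\|_{L^\infty}$ is small enough.

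Concretely, for each cube $Q_i$ of side $L_i$ we pick a point $x_0^i \in Q_i \cap \Omega_3$, set $c_\varphi^i \defeq \varphi(x_0^i)$ and $c_\psi^i \defeq \psi(x_0^i)$, choose a concentric subcube $Q_i' \subset\subset Q_i$ with $|Q_i \setminus Q_i'| \le r|Q_i|$ for a parameter $r = r(\varepsilon)$ to be fixed later, and a smooth cutoff $\eta_i \in C^\infty(\R^4)$ that vanishes on $Q_i'$, equals $1$ outside $Q_i$, and satisfies $|d\eta_i| \lesssim 1/(rL_i)$. We define
\[
\tilde\varphi \defeq \eta_i \varphi + (1-\eta_i) c_\varphi^i, \qquad \tilde\psi \defeq \eta_i \psi + (1-\eta_i) c_\psi^i \quad \text{on } Q_i,
\]
and $\tilde\varphi = \varphi$, $\tilde\psi = \psi$ outside $\bigcup_i Q_i$. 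Then $\tilde\varphi, \tilde\psi \in C_c^\infty(\Omega)$, both gradients vanish on $Q_i'$ (placing $Q_i'$ in $\operatorname{int}\{d\tilde\varphi = d\tilde\psi = 0\}$), and a short expansion using $d\eta_i \wedge d\eta_i = 0$ gives
\[
\tilde\omega - \omega = (\eta_i^2 - 1)\,\omega + \eta_i\, \Theta_i \wedge d\eta_i, \qquad \Theta_i \defeq (\psi - c_\psi^i)\, d\varphi - (\varphi - c_\varphi^i)\, d\psi.
\]

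The hard part will be the $L^\infty$ estimate on $\tilde \omega - \omega$, since a naive cutoff ($c_\varphi^i = c_\psi^i = 0$) would make $\Theta_i \wedge d\eta_i$ of order $1/(rL_i)$, which does not tend to $0$ with $L_i$. The key observation that rescues the construction is that with the gauge choice $c_\varphi^i = \varphi(x_0^i), c_\psi^i = \psi(x_0^i)$, the $1$-form $\Theta_i$ vanishes to \emph{second} order at $x_0^i$: one trivially has $\Theta_i(x_0^i) = 0$, and a direct computation gives
\[
\partial_j(\Theta_i)_k(x_0^i) = \partial_j \psi(x_0^i)\, \partial_k\varphi(x_0^i) - \partial_j\varphi(x_0^i)\, \partial_k\psi(x_0^i) = -\omega_{jk}(x_0^i) = 0
\]
since $x_0^i \in \Omega_3 \subset \{\omega = 0\}$. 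Therefore $|\Theta_i(x)| \lesssim |x - x_0^i|^2 \lesssim L_i^2$ on $Q_i$, yielding
\[
\|\tilde\omega - \omega\|_{L^\infty(Q_i)} \lesssim \sup_{Q_i}|\omega| + L_i/r,
\]
which drops below $\varepsilon$ once $r = r(\varepsilon)$ is fixed first and then the $L_i$ are chosen small enough (using continuity of $\omega$ to drive $\sup_{Q_i}|\omega|$ down). For the measure estimate, the new bad set lies in $(\Omega_3 \setminus \bigcup_i Q_i) \cup \bigcup_i(Q_i \setminus Q_i')$; the first piece has measure below $\varepsilon|\Omega|/2$ by taking a sufficiently dense cover of $\Omega_3$, while the total transition measure is $\sum_i|Q_i \setminus Q_i'| \le r|\Omega| < \varepsilon|\Omega|/2$ by the choice of $r$.
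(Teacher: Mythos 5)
Your proof is correct, and it takes a genuinely different route from the paper's. The paper modifies $(\varphi,\psi)$ on each bad cube $Q_i$ by composing with a ``squeeze'' map $g$ that collapses an inner subcube $R\subset Q_i$ to the centre $(x_i,t_i)$; then $d(\varphi\circ g)\wedge d(\psi\circ g)$ picks up a factor $|Dg|^2\lesssim 1/\delta^2$ from the reparametrisation, which is balanced against the smallness of $d\varphi\wedge d\psi$ at $g(x,t)$ --- the latter being $O(\varepsilon^2)$ by the cube-selection step because $d\varphi(x_i,t_i)\wedge d\psi(x_i,t_i)=0$. Your construction instead interpolates affinely between $(\varphi,\psi)$ and the constants $(\varphi(x_0^i),\psi(x_0^i))$ via a cutoff $\eta_i$, and the identity $\tilde\omega=\eta_i^2\omega+\eta_i\,\Theta_i\wedge d\eta_i$ (which checks out after using $d\eta_i\wedge d\eta_i=0$) isolates the error. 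The key observation that $\Theta_i$ vanishes to \emph{second} order at $x_0^i$ --- the first-order vanishing of $\Theta_i$ being exactly the condition $\omega(x_0^i)=0$ --- is what rescues the otherwise dangerous $1/(rL_i)$ loss from $d\eta_i$ and gives the clean bound $\|\tilde\omega-\omega\|_{L^\infty(Q_i)}\lesssim_V\sup_{Q_i}|\omega|+L_i/r$. Both arguments hinge on $\omega$ being small near the centre of the modification cube (guaranteed by centring at points of $\Omega_3\subset\{\omega=0\}$); yours avoids the nonlinear reparametrisation and makes the cancellation mechanism more explicit, with a transparent two-parameter ($r$, then $L_i$) dependence. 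The Vitali covering, the measure bookkeeping, and the verification of $\tilde V\in X_0$ via exactness/simplicity of $\tilde\omega$ and relative openness of $\U_{r,s}$ are the same in spirit as the paper's.
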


\begin{proof}
Assume, without loss of generality, that $0 < \varepsilon < \min_\Omega \text{dist}(V,\partial\U_{r,s})$. Then the inequality $\|V-\tilde{V}\|_{L^\infty} < \varepsilon$ ensures that $\tilde{V}$ takes values in $\U_{r,s}$.

Since $W$ is absolutely continuous, we cover $\Omega$ by all the cubes $Q_i \subset \Omega$ with centers $(x_i,t_i)$ and the following properties:
\begin{itemize}
\item If $d\varphi(x_i,t_i) \wedge d\psi(x_i,t_i) \neq 0$, then $d\varphi \wedge d\psi \neq 0$ in $Q_i$.

\item If $d\varphi(x_i,t_i) \wedge d\psi(x_i,t_i) = 0$, then we have $\sup_{(x,t) \in Q_i} \abs{W(x,t) - W(x_i,t_i)} < \varepsilon^2/[C (\norm{W}_{L^\infty} + 1)]$.
\end{itemize}
\noindent Such cubes exist for every $(x_i,t_i) \in \Omega$, and therefore they form a Vitali cover of $\Omega$. By the Vitali Covering Theorem, we may choose a finite, pairwise disjoint subcollection $\{Q_1,\ldots,Q_N\}$ with $|\Omega \setminus \cup_{i=1}^N Q_i| < \varepsilon/2$.

We intend to modify $V$ in each $Q_i$ where $d\varphi(x_i,t_i) \wedge d\psi(x_i,t_i) = 0$. Fix such $Q_i$, and let $R \subset Q_i$ be a subcube with center $(x_i,t_i)$ and $\abs{R} = (1-\varepsilon/2) \abs{Q_i}$. Choose $\delta > 0$ such that $(1-\delta)^4 = 1-\varepsilon/2$; now $l(r) = (1-\delta) l(Q_i)$. Choose a smooth cutoff function $\chi_R$ with $\chi_R|_R = 1$ and $\abs{\nabla \chi_R} \le C /[\delta l(Q_i)]$. Define $g \in C^\infty(Q;Q)$ by
\[g(x,t) \defeq (x,t) + \chi_R(x,t) [(x_i,t_i)-(x,t)]\]
so that $g(x,t) = (x_i,t_i)$ is constant in $R$ and $g = \text{id}$ near $\partial Q_i$. Set $\tilde{\varphi} \defeq \varphi \circ g$ and $\tilde{\psi} \defeq \psi \circ g$ so that
\begin{equation} \label{Gradients of modified potentials}
\nabla_{x,t} \, \tilde{\varphi} = D^T_{x,t} \, g \nabla_{x,t} \, \varphi \circ g, \qquad \nabla_{x,t} \, \tilde{\psi} = D^T_{x,t} \, g \nabla_{x,t} \, \psi \circ g.
\end{equation}
Thus $|\{(x,t) \in Q_i \colon d\tilde{\varphi}(x,t) = d\tilde{\psi}(x,t) = 0\}| \ge (1-\varepsilon/2) \abs{Q_i}$.

The claim will be proved once we show that $\|d\tilde{\varphi} \wedge d\tilde{\psi}\| < \varepsilon/2$ in $Q_i$; then $\|V-\tilde{V}\|_{L^\infty} < \varepsilon$. To this end, we fix $(x,t) \in Q_i$ and estimate
\[\abs{D g(x,t)} = \abs{(1-\chi_R(x,t)) I + [(x_i,t_i)-(x,t)] \otimes \nabla \chi_R(x,t)} \le \frac{C}{\delta}\]
and
\begin{align*}
\abs{d\varphi(g(x,t)) \wedge d\psi(g(x,t))}
&\le \abs{d\varphi(g(x,t)) \wedge (d\psi(g(x,t)) - d\psi(g(x_i,t_i))} \\
&+ \abs{(d\varphi(x,t)) - d\varphi(x_i,t_i)) \wedge d\psi(x_i,t_i)} \\
&\le C' \norm{W}_{L^\infty} \abs{W(x,t) - W(x_i,t_i)} < \varepsilon^2/C.
\end{align*}
Now $\delta = 1 - (1-\varepsilon/2)^{1/4}$ implies that $\varepsilon/2 = 1 - (1-\varepsilon/2) = \delta (1 + (1-\varepsilon/2)^{1/4} + (1-\varepsilon/2)^{2/4} + (1-\varepsilon/2)^{3/4}) < 4\delta$. Thus, whenever $\abs{v_1} = \abs{v_2} = 1$, we have
\begin{align*}
  & |[d\tilde{\varphi}(x,t) \wedge d\tilde{\psi}(x,t)] (v_1,v_2)| \\
= & |[d\varphi(g(x,t)) \wedge d\psi(g(x,t))] (D^T g(x,t) v_1, D^T g(x,t) v_2)| < \frac{\varepsilon}{2}.
\end{align*}
\end{proof}

By Lemma \ref{Lemma on modifying elements of X0} and a standard diagonal argument, it suffices to prove Proposition \ref{Main proposition} for every $\varepsilon > 0$ and every 
mapping $V \in X_0$ such that
\[\tilde{\Omega} \defeq \{(x,t) \in \Omega \colon d\varphi(x,t) \wedge d\psi(x,t) \neq 0\} \cup \operatorname{int}(\{(x,t) \in \Omega \colon d\varphi(x,t) = d\psi(x,t) = 0\})\]
satisfies
\begin{align}
& |\tilde{\Omega}| > (1-\varepsilon) \abs{\Omega}, \label{Assumption on tilde Omega 1} \\
& \begin{aligned}
& \int_\Omega \left( \frac{r^2+s^2}{2} - \abs{u(x,t)}^2 - \abs{B(x,t)}^2 \right) \, dx \, dt \label{Assumption on tilde Omega 2} \\
& \le \, 2 \int_{\tilde{\Omega}} \left( \frac{r^2+s^2}{2} - \abs{u(x,t)}^2 - \abs{B(x,t)}^2 \right) \, dx \, dt. \end{aligned}
\end{align}

\subsection{Proof of Proposition \ref{Main proposition}}
Assuming that \eqref{Assumption on tilde Omega 1}--\eqref{Assumption on tilde Omega 2} hold, we wish to construct the mappings $V_\ell = p(W_\ell)$ of Proposition \ref{Main proposition} by suitably modifying a discretisation argument from~\cite{DLS09}. Given $V = p(W) \in X_0$ we cover $\tilde{\Omega}$, up to a small set, by cubes $Q_i \subset \tilde{\Omega}$ with center $(x_i,t_i)$ such that $W$ varies very little in $Q_i$. We then approximate $W$ by $W(x_i,t_i)$ in each $Q_i$. Now $V(x_i,t_i) = \bar{\nu}$ for some $\nu = \sum_{\mathbf{j} \in \{1,2\}^N} \mu_\mathbf{j} \delta_{V_\mathbf{j}} \in \mathcal{L}_g(\mathscr{O}_\tau)$ with $\tau$ close to $1$, and we set $W_\ell \defeq [W - W(x_i,t_i)] + [W(x_i,t_i) + (\bar{u}_\ell,\bar{S}_\ell,d\bar{\varphi}_\ell, d\bar{\psi}_\ell)]$, where $(\bar{u}_\ell,\bar{S}_\ell,d\bar{\varphi}_\ell, d\bar{\psi}_\ell)$ is given by Proposition \ref{Proposition on approximation of laminates}.

On one hand, the discretisation needs to be fine enough that $V_\ell = p(W_\ell)$ does not take values outside $\U_{r,s}$, and on the other hand, the cubes need to cover a substantial proportion of $\tilde{\Omega}$. Both properties are ensured by the following application of the Vitali Covering Theorem.
 
\begin{lem} \label{Discretisation lemma}
Suppose $\varepsilon > 0$ and $V = p(W) \in X_0$ satisfies \eqref{Assumption on tilde Omega 1}--\eqref{Assumption on tilde Omega 2}. Let $\gamma > 0$. Then there exist pairwise disjoint cubes $Q_i \subset \tilde{\Omega}$ with centers $(x_i,t_i)$ and parameters $\delta_i > 0$ with the following properties:
\begin{enumerate}[\upshape (i)]
\item For every $i \in \N$, there exists $\tau_i \in (0,1)$ and $\nu = \sum_{\mathbf{j} \in \{1,2\}^N} \mu_\mathbf{j} \delta_{V_\mathbf{j}} \in \mathcal{L}_g(\mathscr{O}_{\tau_i})$ with barycentre $\bar{\nu} = V(x_i,t_i)$, where $\nu$ given by Proposition \ref{Proposition on existence of finite-order good laminates}.

\item $B_{\mathscr{M}}([V_{\mathbf{j}',1},V_{\mathbf{j}',2}]),\delta_i) \subset \U_{r,s}$ for all $i \in \N$ and $\mathbf{j}' \in \{1,2\}^k$, $1 \le k \le N-1$.

\item $\sup_{(x,t) \in Q_i} \abs{W(x,t)-W(x_i,t_i)} < \gamma \delta_i$.

\item $|\tilde{\Omega} \setminus \cup_{i=1}^\infty Q_i| = 0$.
\end{enumerate}
\end{lem}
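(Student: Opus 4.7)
The strategy is a standard Vitali covering argument. For each point $(x_0,t_0)\in\tilde\Omega$ I will produce a centered cube $Q(x_0,t_0)$ and a parameter $\delta(x_0,t_0)>0$ satisfying properties (i)--(iii); the collection of all such cubes (including all smaller concentric cubes at the same center) is a Vitali cover of $\tilde\Omega$, and the Vitali Covering Theorem then extracts the disjoint countable subfamily needed for (iv).

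To construct the cube at a fixed $(x_0,t_0)\in\tilde\Omega$, I proceed in three short steps. First, since $V(x_0,t_0)\in\U_{r,s}$, Proposition \ref{Proposition on existence of finite-order good laminates} provides a choice of $\tau_i\in(0,1)$ (taken sufficiently close to $1$) together with a good finite-order laminate
\[\nu=\sum_{\mathbf{j}\in\{1,2\}^N}\mu_{\mathbf{j}}\delta_{V_{\mathbf{j}}}\in\mathcal{L}_g(\mathscr{O}_{\tau_i})\]
with barycentre $V(x_0,t_0)$ and with every intermediate $\Lambda_g$-segment $[V_{\mathbf{j}',1},V_{\mathbf{j}',2}]$ contained in $\U_{r,s}$; this secures (i). Second, the laminate being of finite order, there are only finitely many intermediate segments, each of them a compact subset of the relatively open set $\U_{r,s}\subset\mathscr{M}$. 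Hence there exists a single $\delta_i>0$ such that $B_{\mathscr{M}}([V_{\mathbf{j}',1},V_{\mathbf{j}',2}],\delta_i)\subset\U_{r,s}$ for every intermediate $\mathbf{j}'$, which is (ii). Third, note that $\tilde\Omega$ is open (its two defining pieces are both open) and $W$ is smooth, so I may shrink a cube $Q(x_0,t_0)\subset\tilde\Omega$ centered at $(x_0,t_0)$ until
\[\sup_{(x,t)\in Q(x_0,t_0)}|W(x,t)-W(x_0,t_0)|<\gamma\delta_i,\]
giving (iii).

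Letting $(x_0,t_0)$ range over $\tilde\Omega$ and adjoining all concentric cubes of smaller diameter yields a Vitali cover of $\tilde\Omega$. A standard application of the Vitali Covering Theorem in $\R^4$ produces a countable pairwise disjoint subfamily $\{Q_i\}_{i\in\N}$ with $|\tilde\Omega\setminus\bigcup_i Q_i|=0$, establishing (iv). No substantial obstacle arises: the only point worth emphasising is that the finite-order structure of $\nu$ is precisely what converts the relative openness of $\U_{r,s}$ in $\mathscr{M}$ into a single uniform $\delta_i$ that works simultaneously for all the intermediate segments attached to the center $(x_i,t_i)$.
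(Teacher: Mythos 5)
Your proof is correct and follows essentially the same route as the paper: produce, for each center, a finite-order good laminate via Proposition \ref{Proposition on existence of finite-order good laminates}, obtain a uniform $\delta_i$ from the relative openness of $\U_{r,s}$ together with the compactness of the finitely many intermediate segments, shrink the cube using continuity of $W$, and extract the disjoint family by the Vitali Covering Theorem. The only difference is that you spell out slightly more explicitly why a single $\delta_i$ works and why $\tilde\Omega$ is open, both of which the paper leaves implicit.
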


\begin{proof}
Let $(x,t) \in \tilde{\Omega}$. Since $V(x,t) \in \U_{r,s}$, by Theorem \ref{Hull theorem} there exist $\tau \in (0,1)$ and $\nu = \sum_{\mathbf{j} \in \{1,2\}^N} \mu_\mathbf{j} \delta_\mathbf{j} \in \mathcal{L}_g(\mathscr{O}_\tau)$ with barycentre $\bar{\nu} = V(x,t)$. Furthermore, there exists $\delta > 0$ such that $B_{\mathscr{M}}([V_{\mathbf{j}',1},V_{\mathbf{j}',2}]),\delta) \subset \U_{r,s}$ whenever $\mathbf{j}' \in \{1,2\}^k$, $1 \le k \le N-1$. Since $W$ is continuous, there exists a cube $Q \subset \tilde{\Omega}$ with center $(x,t)$ such that $\sup_{(x',t') \in Q} \abs{W(x',t') - W(x,t)} < \gamma \delta_i$.

The collection of cubes chosen above forms a Vitali cover of $\tilde{\Omega}$, and therefore, by the Vitali Covering Theorem, there exists a countable, pairwise disjoint subcollection $\{Q_i\}_{i \in \N}$ with $|\tilde{\Omega} \setminus \cup_{i=1}^\infty Q_i| = 0$.
\end{proof}

\begin{proof}[Proof of Proposition \ref{Main proposition}]
Let $\varepsilon > 0$ and $V \in X_0$, and suppose $V$ satisfies \eqref{Assumption on tilde Omega 1}--\eqref{Assumption on tilde Omega 2}. Let $0 < \gamma = \gamma_V \ll [\min_{(x,t) \in \tilde{\Omega}} ((r^2 + s^2)/2 - \abs{u(x,t)}^2 - \abs{B(x,t)}^2)]^{1/2}$ (to be determined later) and choose cubes $Q_i \subset \tilde{\Omega}$ via Lemma \ref{Discretisation lemma}. At each $Q_i$ define
\[W_\ell \defeq W + (\bar{u}_\ell,\bar{S}_\ell,d \bar{\varphi}_\ell, d \bar{\psi}_\ell) \in C^\infty(Q_i;\R^{17}),\]
where $(\bar{u}_\ell,\bar{S}_\ell,d \bar{\varphi}_\ell, d \bar{\psi}_\ell) \in C_c^\infty(Q_i;\R^{17})$ is given by Proposition \ref{Proposition on approximation of laminates}.

We now intend to show that
\[V_\ell = (u + \bar{u}_\ell, S + \bar{S}_\ell, (d\varphi + d\bar{\varphi}_\ell) \wedge (d\psi + d\bar{\psi}_\ell)) \eqdef (u_\ell,S_\ell,d\varphi_\ell \wedge d\psi_\ell)\]
takes values in $\U_{r,s}$ for every $\ell \in \N$; then $V_\ell \in X_0$ by construction.

Fix a cube $Q_i$ and write $V(x_i,t_i) = p(W(x_i,t_i)) \in \U_{r,s}$ as a barycentre of $\nu = \sum_{\mathbf{j} \in \{1,2\}^N} \mu_\mathbf{j} \delta_{V_\mathbf{j}} \in \mathcal{L}_g(\mathscr{O}_\tau)$. By Lemma \ref{Discretisation lemma}, whenever $\mathbf{j}' \in \{1,2\}^k$ with $1 \le k \le N-1$, we have $B_{\mathscr{M}}([V_{\mathbf{j}',1},V_{\mathbf{j}',2}],\delta_i) \subset \U_{r,s}$. 

Let $(x,t) \in Q_i$. By Lemma \ref{Discretisation lemma}, $\abs{W(x,t) - W(x_i,t_i)} < \gamma \delta_i$. By Proposition \ref{Proposition on approximation of laminates}, there exists $\tilde{W}$ such that $p(\tilde{W}) \in [V_{\mathbf{j}',1},V_{\mathbf{j}',2}]$ for some $\mathbf{j}' \in \{1,2\}^k$ and some $k \le N_1$, and $|W(x_i,t_i) + (\bar{u}_\ell,\bar{S}_\ell,d \bar{\varphi}_\ell, d \bar{\psi}_\ell)(x,t) - \tilde{W}| < \gamma \delta_i$. Thus $|W_\ell(x,t)-\tilde{W}| < 2 \gamma \delta_i$. Hence, whenever $\gamma > 0$ is small enough (independently of $i$), we conclude that $|V_\ell(x,t) -p(\tilde{W})| < \delta_i$ and $V_\ell(x,t) \in \U_{r,s}$.

\vspace{0.3cm}
Whenever $\gamma^2 < \min_{(x,t) \in \tilde{\Omega}} [(r^2 + s^2)/2 - \abs{u(x,t)}^2 - \abs{B(x,t)}^2]/3$, condition (iii) of Lemma \ref{Discretisation lemma} and the property $|\tilde{\Omega} \setminus \cup_{i \in \N} Q_i| = 0$ yield a finite subcollection of cubes such that
\begin{align*}
& \int_{\tilde{\Omega}} \left( \frac{r^2+s^2}{2} - \abs{u(x,t)}^2 - \abs{B(x,t)}^2 \right) dx \, dt \\
\le &2 \sum_{i=1}^M \left( \frac{r^2 + s^2}{2} - \abs{u(x_i,t_i)}^2 - \abs{B(x_i,t_i)}^2 \right) \abs{Q_i}.
\end{align*}
Let $1 \le i \le M$ and write $V(x_i,t_i) = \bar{\nu}$, where $\nu = \sum_{\mathbf{j} \in \{1,2\}^N} \mu_\mathbf{j} \delta_{V_\mathbf{j}}$ is given by Proposition \ref{Proposition on existence of finite-order good laminates}. In particular, $V_\ell(x,t) = V_\mathbf{j}$ in each $A_\mathbf{j} \subset Q_i$. Now Proposition \ref{Proposition on existence of finite-order good laminates} and Lemma \ref{Discretisation lemma} give
\begin{align*}
& \left( \frac{r^2 + s^2}{2} - \abs{u(x_i,t_i)}^2 - \abs{B(x_i,t_i)}^2 \right) \abs{Q_i} \\
\le & 3 \sum_{\mathbf{j} \in \{1,2\}^N} (\abs{u_\mathbf{j}}^2 + \abs{B_\mathbf{j}}^2 - \abs{u(x_i,t_i)}^2 - \abs{B(x_i,t_i)}^2) \abs{A_\mathbf{j}} \\
\le & 4 \sum_{\mathbf{j} \in \{1,2\}^N} \int_{A_\mathbf{j}} (\abs{u_\ell(x,t)}^2 + \abs{B_\ell(x,t)}^2 - \abs{u(x,t)}^2 - \abs{B(x,t)}^2) \, dx \, dt.
\end{align*}
Indeed, since $V_\ell = V_\mathbf{j}$ in each $A_\mathbf{j}$, the last estimate is equivalent to the inequality
\begin{align*}
& 4 \sum_{\mathbf{j} \in \{1,2\}^N} \int_{A_\mathbf{j}} (\abs{u(x,t)}^2 + \abs{B(x,t)}^2 - \abs{u(x_i,t_i)}^2 - \abs{B(x_i,t_i)}^2) \, dx \, dt \\
\le & \sum_{\mathbf{j} \in \{1,2\}^N} \abs{A_\mathbf{j}} [\abs{u_\mathbf{j}}^2 + \abs{B_\mathbf{j}}^2 - \abs{u(x_i,t_i)}^2 - \abs{B(x_i,t_i)}^2],
\end{align*}
which in turn is ensured if $\tau$ is large enough and $\gamma^2 \ll \min_{(x,t) \in \tilde{\Omega}} [(r^2 + s^2)/2 - \abs{u(x,t)}^2 - \abs{B(x,t)}^2]$ is small enough. This proves the claim.
\end{proof}

\subsection{Completion of the proof of Theorem \ref{MHD 3D theorem}} \label{Completion of the proof of MHD 3D theorem}
\begin{proof}[Proof of Theorem \ref{MHD 3D theorem}]
The functional $V \mapsto \int_{\R^4} \abs{V(x,t)}^2 \, dx \, dt$ is a Baire-1 map in $X$, and thus its poins of continuity are residual in $X$ (see~\cite[Lemma 4.5]{DLS09}). Let now $V \in X$ be a point of continuity and choose a sequence of mappings $\tilde{V}_\ell \in X_0$ with $d(\tilde{V}_\ell,V) \to 0$. By Proposition \ref{Main proposition} and a standard diagonal argument, we find $V_\ell \in X_0$ with $d(V_\ell,V) \to 0$ and
\begin{align}
& \liminf_{\ell \to \infty} \int_{\Omega} (\abs{u_\ell(x,t)}^2 + \abs{B_\ell(x,t)}^2 - \abs{u(x,t)}^2 - \abs{B(x,t)}^2) \, dx \, dt \label{Limit estimate 1} \\
\ge \, & C \int_{\Omega} \left( \frac{r^2 + s^2}{2} - |u(x,t)|^2 - |B(x,t)|^2 \right) \, dx \, dt. \label{Limit estimate 2}
\end{align}
Since $V \mapsto \norm{V}_{L^2}^2$ is continuous at $V$ and $d(V_\ell,V) \to 0$, it follows that $\norm{V_\ell}_{L^2}^2 \to \norm{V}_{L^2}^2$. Thus $\norm{V_\ell-V}_{L^2} \to 0$ which, combined with \eqref{Limit estimate 1}--\eqref{Limit estimate 2}, gives $\abs{u(x,t)}^2 + \abs{B(x,t)}^2 = r^2 + s^2$ a.e. $(x,t) \in \Omega$. Thus
\[V(x,t) \in \overline{\U_{r,s}} \cap \{(u,S,B,E) \colon \abs{u+B} = r, \; \abs{u-B} = s\} \subset K_{r,s}\]
a.e. $(x,t) \in \Omega$. On the other hand, by the definition of $X$ we have $V(x,t) = 0$ a.e. $(x,t) \in (\R^3 \times \R) \setminus \Omega$. Now $V$ has all the sought properties.
\end{proof}

\begin{appendix}

\section{The ill-definedness of magnetic helicity and mean-square magnetic potential in the whole space} \label{The ill-definedness of magnetic helicity and mean-square magnetic potential in the whole space}

We state two simple results which indicate that mean-square magnetic potential and magnetic helicity are \emph{not} well-defined quantities for $L^2$-integrable solutions of ideal MHD in $\R^2$ or $\R^3$. We denote $L^2_\sigma(\R^n;\R^n) \defeq \{v \in L^2(\R^n;\R^n) \colon \nabla \cdot v = 0\}$ when $n \in \{2,3\}$.

\begin{prop} \label{Baire category proposition}
There exists $v \in L^2_\sigma(\R^2;\R^2)$ with the following property: if $\Psi \in \mathcal{D}'(\R^2)$ satisfies $\nabla^\perp \Psi = v$, then $\Psi \notin L^2(\R^2)$.
\end{prop}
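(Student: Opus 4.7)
My plan is a direct Fourier-analytic construction. The first, and only mildly subtle, point is uniqueness of the stream function modulo additive constants: if two distributions $\Psi_1,\Psi_2\in\mathcal{D}'(\R^2)$ satisfy $\nabla^\perp\Psi_j=v$, then all partial derivatives of $\Psi_1-\Psi_2$ vanish, hence $\Psi_1-\Psi_2$ is a constant. Because no nonzero constant belongs to $L^2(\R^2)$, there is at most one $L^2$ stream function for each $v$; so it suffices to produce a single $v\in L^2_\sigma(\R^2;\R^2)$ whose canonical Fourier-defined stream function fails to be square integrable.

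Passing to Fourier variables, the divergence-free condition $\xi\cdot\hat v(\xi)=0$ allows one to write $\hat v(\xi)=h(\xi)\,\xi^\perp$ for the scalar $h(\xi)\defeq\hat v(\xi)\cdot\xi^\perp/|\xi|^2$. The equation $\nabla^\perp\Psi=v$ then becomes $i\xi^\perp\hat\Psi(\xi)=h(\xi)\xi^\perp$, so $\hat\Psi(\xi)=-ih(\xi)$ for $\xi\neq 0$, the only distributional ambiguity being a multiple of $\delta_0$, which is never in $L^2$. By Plancherel,
\[
v\in L^2\iff \int_{\R^2}|h(\xi)|^2|\xi|^2\,d\xi<\infty,\qquad \Psi\in L^2\iff \int_{\R^2}|h(\xi)|^2\,d\xi<\infty,
\]
so I need $h$ for which the first integral converges and the second diverges.

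Such an $h$ arises by localising a logarithmic singularity at the origin: set
\[
h(\xi)\defeq i\,|\xi|^{-1}\bigl(\log(1/|\xi|)\bigr)^{-1/2}\chi_{\{|\xi|\le 1/2\}}.
\]
Polar coordinates give
\begin{align*}
\int_{\R^2}|h(\xi)|^2|\xi|^2\,d\xi &= 2\pi\int_0^{1/2}\frac{r\,dr}{\log(1/r)}<\infty,\\
\int_{\R^2}|h(\xi)|^2\,d\xi &= 2\pi\int_0^{1/2}\frac{dr}{r\log(1/r)}=\infty,
\end{align*}
the latter divergence following from the substitution $u=\log(1/r)$. The factor $i$ makes $h$ purely imaginary; combined with $h(-\xi)=h(\xi)$ and $(-\xi)^\perp=-\xi^\perp$, this gives $\hat v(-\xi)=\overline{\hat v(\xi)}$, so $v$ is real valued. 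Therefore $v\in L^2_\sigma(\R^2;\R^2)$ yet no distributional stream function for $v$ lies in $L^2$, completing the argument.
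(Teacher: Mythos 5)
Your proof is correct, and it takes a concrete Fourier-analytic route where the paper's proof is deliberately terse and abstract. The paper simply asserts the existence of $\Theta\in\dot W^{1,2}(\R^2)$ with $\Theta+C\notin W^{1,2}(\R^2)$ for every constant $C$, and then sets $v=\nabla^\perp\Theta$; the burden of exhibiting such a $\Theta$ is left implicit. You construct one explicitly: with $\hat v=h(\xi)\xi^\perp$ and $\hat\Psi=-ih$, Plancherel converts the two memberships to $\int|h|^2|\xi|^2\,d\xi<\infty$ versus $\int|h|^2\,d\xi=\infty$, and the weight $h(\xi)=i|\xi|^{-1}(\log(1/|\xi|))^{-1/2}\chi_{\{|\xi|\le 1/2\}}$ realises exactly the borderline logarithmic failure of the embedding $\dot W^{1,2}(\R^2)\hookrightarrow L^2(\R^2)$. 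Your $\Psi_0$ (defined by $\hat\Psi_0=-ih$) \emph{is} an explicit instance of the paper's $\Theta$, so the two arguments rest on the same phenomenon; yours has the advantage of being self-contained, and it also verifies the real-valuedness of $v$, which the paper's one-line sketch does not address.

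One small organisational point: the reduction in your first paragraph (``at most one $L^2$ stream function, so check the canonical one'') is not by itself conclusive, since a priori $\Psi_0\notin L^2$ does not rule out $\Psi_0+c\in L^2$ for some nonzero $c$. What closes the gap is the Fourier argument you give next: any $L^2$ solution $\Psi$ of $\nabla^\perp\Psi=v$ has $\hat\Psi\in L^2$, and since the ambiguity in $\hat\Psi$ is a distribution supported at the origin (hence singular with respect to Lebesgue measure and, as $h$ is locally integrable near $0$, necessarily zero once $\hat\Psi$ is a function), one gets $\hat\Psi=-ih$ a.e.\ and hence $\|\Psi\|_{L^2}=\|h\|_{L^2}=\infty$. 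It would read more cleanly to lead with that a.e.\ identification rather than with the $\mathcal D'$-uniqueness remark, but there is no actual gap in what you wrote.
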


Proposition \ref{Baire category proposition} is proved by choosing $\Theta \in \dot{W}^{1,2}(\R^2)$ such that $\Theta + C \notin W^{1,2}(\R^2)$ for every $C \in \R$ and setting $v \defeq \nabla^\perp \Theta$. The 3D result requires somewhat more work. Here we choose a smooth $v$ in order to make $\Psi \cdot v$ well-defined for all $\Psi \in \mathcal{D}'(\R^3,\R^3)$.

\begin{prop}
There exists $v \in L^2_\sigma(\R^3,\R^3) \cap C^\infty(\R^3,\R^3)$ with the following property: whenever $\Psi \in \mathcal{D}'(\R^3,\R^3)$ satisfies $\nabla \times \Psi = v$, we have $\Psi \cdot v \not\in L^1(\R^3)$.
\end{prop}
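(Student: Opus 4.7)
The plan is to build $v$ as an infinite sum of disjointly supported smooth helicity ``bumps'' whose total magnetic helicity diverges but whose total $L^2$ norm is finite. The key feature is that there will be arbitrarily large radii $R$ for which $\partial B_R$ is disjoint from $\supp v$; at such $R$, the integral $\int_{B_R}\Psi\cdot v\,dx$ becomes independent of the particular distributional vector potential $\Psi$, and is therefore forced to grow to infinity with $R$.

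For the base bump, take the Beltrami field $B(x_1,x_2,x_3):=(\cos x_3,-\sin x_3,0)$, which satisfies $\nabla\times B=B$, and fix $\chi\in C_c^\infty(\R^3)$ supported in the unit ball with $\int\chi^2\,dx>0$. Setting $U_0:=\chi B$ and $v_0:=\nabla\times U_0=\chi B+\nabla\chi\times B$, the identity $B\cdot(\nabla\chi\times B)=0$ yields $U_0\cdot v_0=\chi^2\ge 0$, so $H_0:=\int U_0\cdot v_0\,dx>0$. Now rescale: for parameters $r_k,\alpha_k>0$ and centres $x_k\in\R^3$ define
\[v_k(x):=\alpha_k v_0\bigl((x-x_k)/r_k\bigr),\qquad U_k(x):=\alpha_k r_k U_0\bigl((x-x_k)/r_k\bigr),\]
so that $\nabla\times U_k=v_k$, $\supp U_k=\supp v_k\subset\overline{B(x_k,r_k)}$, $\|v_k\|_{L^2}^2=\alpha_k^2 r_k^3\|v_0\|_{L^2}^2$, and $\int U_k\cdot v_k\,dx=\alpha_k^2 r_k^4 H_0$. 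Choose for instance $r_k:=k^3$, $\alpha_k^2:=k^{-11}$, and $x_k$ with $|x_k|$ large enough to make the balls $B(x_k,r_k)$ pairwise disjoint and to leave gaps between them tending to $\infty$; then $\sum_k\alpha_k^2 r_k^3<\infty$, $\sum_k\alpha_k^2 r_k^4=\infty$, and the set $\mathcal{R}:=\{R>0:\partial B_R\cap\bigcup_k\overline{B(x_k,r_k)}=\emptyset\}$ is unbounded. The fields $v:=\sum_k v_k$ and $\tilde\Psi:=\sum_k U_k$ are smooth by local finiteness of the sums, and satisfy $v\in L^2_\sigma(\R^3)\cap C^\infty(\R^3)$ and $\nabla\times\tilde\Psi=v$.

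Given any $\Psi\in\mathcal{D}'(\R^3;\R^3)$ with $\nabla\times\Psi=v$, the distributional Poincar\'e lemma on $\R^3$ gives $\Psi=\tilde\Psi+\nabla\phi$ for some $\phi\in\mathcal{D}'(\R^3)$. Fix $R\in\mathcal{R}$; since $v\equiv 0$ on an open neighbourhood of $\partial B_R$, one can choose a cutoff $\eta\in C_c^\infty(B_R)$ with $\eta\equiv 1$ on a neighbourhood of $\supp v\cap B_R$, so that $\nabla\eta\equiv 0$ on $\supp v$. Then, using $\nabla\cdot v=0$,
\[\int_{B_R}\nabla\phi\cdot v\,dx=\langle\nabla\phi,\eta v\rangle=-\langle\phi,\nabla\eta\cdot v+\eta\nabla\cdot v\rangle=0.\]
Since $U_j$ and $v_k$ have disjoint supports for $j\neq k$, this combines to give
\[\int_{B_R}\Psi\cdot v\,dx=\int_{B_R}\tilde\Psi\cdot v\,dx=H_0\sum_{k:\,B(x_k,r_k)\subset B_R}\alpha_k^2 r_k^4,\]
which tends to $+\infty$ as $R\to\infty$ along $\mathcal{R}$. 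If $\Psi\cdot v$ were in $L^1(\R^3)$, the left-hand side would be uniformly bounded by $\|\Psi\cdot v\|_{L^1(\R^3)}$, a contradiction.

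The main obstacle is justifying the integration-by-parts identity when $\phi$ is merely a distribution; the smoothness of $v$ together with the positive distance between $\supp v$ and $\partial B_R$ permit localisation via the compactly supported test function $\eta$, thereby avoiding any question of distributional traces on $\partial B_R$. A routine elementary check verifies that the parameters $(r_k,\alpha_k,x_k)$ can be arranged to satisfy all the geometric and summability requirements simultaneously.
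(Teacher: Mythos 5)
Your construction is correct and follows the same basic strategy as the paper: a locally finite union of disjointly supported, rescaled ``helicity bumps'' whose $L^2$ norms are summable while their helicities are not, forcing any vector potential $\Psi$ to have $\Psi\cdot v\notin L^1$. The differences are cosmetic and, if anything, tighten the argument. You build the base bump explicitly from a truncated Beltrami field (giving $U_0\cdot v_0=\chi^2\ge 0$), whereas the paper just picks any $\psi_0$ with $\int\psi_0\cdot(\nabla\times\psi_0)\neq 0$ and offers a particular choice; both are fine. You keep two free scaling parameters $(\alpha_k,r_k)$, while the paper fixes the $L^2$ normalisation of each $\varphi_j$ and inserts coefficients $1/j^2$; your bookkeeping is internally consistent ($\alpha_k^2 r_k^3=k^{-2}$ summable, $\alpha_k^2 r_k^4=k$ divergent), whereas the paper's as printed ($v=\sum j^{-2}\varphi_j$, $\Psi=\psi_j+\nabla g_j$ on $B_j$, $R_j\ge j$) contains an exponent slip — since $\Psi|_{B_j}=j^{-2}\psi_j+\nabla g_j$, the per-ball contribution is $R_j H_0/j^4$, so the paper should require $R_j\gtrsim j^3$, exactly the growth you build in. Finally, where the paper tersely invokes ``$\nabla g_j\cdot\varphi_j=0$'' for a distributional $g_j$, you handle the distributional pairing carefully: you use the global Poincaré lemma $\Psi=\tilde\Psi+\nabla\phi$ on $\R^3$, choose radii $R$ for which $\partial B_R$ avoids $\supp v$, and kill the $\nabla\phi$ term via a cutoff $\eta$ with $\nabla\eta\equiv 0$ on $\supp v$, using $\nabla\cdot v=0$. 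This is the right way to make the integration-by-parts rigorous and covers the point the paper leaves implicit.
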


\begin{proof}
Fix $\psi_0 \in C_c^\infty(B(0,1),\R^3)$ such that $\psi_0$ and $\varphi_0 \defeq \nabla \times \psi_0$ satisfy $\int_{B(0,1)} \psi_0(x) \cdot \varphi_0(x) \, dx \neq 0$. (Choose, e.g., $\psi_0(x) = \chi(x) (1,x_3,0)$, where $\chi \in C_c^\infty(B(0,1)$ with $\chi(0) > 0$.) Set $\psi_0(x) = \varphi_0(x) = 0$ outside $B(0,1)$.

Fix points $x_j \in \R^3$ and radii $R_j > 0$ such that the balls $B(x_j,R_j)$, $j \in \N$, are mutually disjoint. For every $j \in \N$ denote
\[\psi_j(x) \defeq \frac{\psi_0 \left( \frac{x-x_j}{R_j} \right)}{R_j^{1/2}}, \qquad \varphi_j(x) \defeq \frac{\varphi_0 \left( \frac{x-x_j}{R_j} \right)}{R_j^{3/2}},\]
so that $\text{supp}(\psi_j) \subset B(x_j,R_j)$, $\nabla \times \psi_j = \varphi_j$ and $\norm{\varphi_j}_{L^2} = \norm{\varphi_0}_{L^2}$. Define $v \in L^2_\sigma(\R^3,\R^3)$ by
\[v(x) \defeq \sum_{j=1}^\infty \frac{1}{j^2} \varphi_j(x).\]

Suppose now $\Psi \in \mathcal{D}'(\R^3,\R^3)$ satisfies $\nabla \times \Psi = v$ and $\Psi \cdot v \in L^1(\R^3)$. Given $j \in \N$, note that in $B(x_j,R_j)$ we have $\Psi = \psi_j + \nabla g_j$, where $g_j \in \mathcal{D}'(B(x_j,t_j))$. Thus, by using the fact that $\nabla g_j \cdot \varphi_j = 0$ for every $j \in \N$ we get
\begin{align*}
\int_{\R^3} \abs{\Psi(x) \cdot v(x)} \, dx
&\ge \sum_{j=1}^\infty \frac{1}{j^2} \left| \int_{B(x_j,R_j)} \psi_j(x) \cdot \varphi_j(x) \, dx \right| \\
&\ge \sum_{j=1}^\infty \frac{1}{j^2} R_j \left| \int_{B(0,1)} \psi_0(x) \cdot \varphi_0(x) \, dx \right|,
\end{align*}
and the lower bound series diverges as soon as the radii satisfy $R_j \ge j$.
\end{proof}
\end{appendix}

\bigskip
\footnotesize
\noindent\textit{Acknowledgements.} We would like to thank Diego C\'{o}rdoba, Anne Bronzi, \'{A}ngel Castro, Sara Daneri, Luis Guijarro and Yann Brenier for various discussions on topics related to the paper.

\end{document}